\theoremstyle{plain}
\newtheorem{thm}{Theorem}[section]
\newtheorem{lem}[thm]{Lemma}
\newtheorem{prop}[thm]{Proposition}
\newtheorem{cor}[thm]{Corollary}
\theoremstyle{definition}
\theoremstyle{remark}
\newtheorem{rem}[thm]{Remark}
 \font\cyr=wncyr10
 \newcommand{\nc}{\newcommand}
\DeclareMathOperator{\sign}{{sgn}}
\nc{\per}[1]{\underset{#1}{\boldsymbol \pi}\,}
 \nc{\MT}{{\rm MT}}
 \nc{\XX}{{X}}
 \nc{\wht}{{\widehat}}
 \nc{\bwg}{{\bigwedge}}
 \nc{\mmu}{{\boldsymbol{\mu}}}
 \nc{\mal}{{{\scriptstyle \maltese}}}
 \nc{\fA}{{\mathfrak A}}
 \nc{\HH}{{\mathfrak H}}
 \nc{\ra}{\rightarrow}
 \nc{\ors}{{\bfs}}
 \nc{\orr}{{\bfr}}
 \nc{\os}{{\overset}}
 \nc{\G}{{\mathbb G}}
 \nc{\ZZ}{{\mathbb Z}}
 \nc{\RR}{{\mathbb R}}
 \nc{\NN}{{\mathbb N}}
 \nc{\ZN}{{\mathbb Z_{\ge 0}}}
 \nc{\Q}{{\mathbb Q}}
 \nc{\QQ}{{\mathbb Q}}
 \nc{\C}{{\mathbb C}}
 \nc{\Cnn}{{\mathbb C}_{\ge 0}}
 \nc{\Cp}{{\mathbb C}_{>0}}
 \nc{\MPV}{{\mathcal{MPV}}}
 \nc{\tB}{{\tilde B}}
 \nc{\Li}{{\rm Li}}
 \nc{\suf}{{\ast\,}}
 \nc{\sufq}{{\ast_q\,}}
 \nc{\gam}{{\gamma}}
 \nc{\gG}{{\Gamma}}
 \nc{\om}{{\omega}}
 \nc{\vep}{{\varepsilon}}
 \nc{\ga}{{\alpha}}
 \nc{\gl}{{\lambda}}
 \nc{\gb}{{\beta}}
 \nc{\gd}{{\delta}}
 \nc{\orgd}{{\vec \gd\,}}
 \nc{\gs}{{\sigma}}
 \nc{\gS}{{\Sigma}}
 \nc{\gk}{{\kappa}}
  \nc{\gz}{{\zeta}}
 \nc{\tgz}{{\tilde{\zeta}}}
 \nc{\gO}{{\Omega}}
 \nc{\sif}{{\mathcal S}}
 \nc{\gt}{{\tau}}
 \nc{\Lra}{\Longrightarrow}
 \nc{\lra}{\longrightarrow}
 \nc{\lmaps}{\longmapsto}
 \nc{\fS}{{\mathfrak S}}
 \nc{\DD}{{\mathfrak D}}
 \nc{\Llra}{\Longleftrightarrow}
 \nc{\ol}{\overline}
 \nc{\ola}{\overleftarrow}
 \nc{\lms}{\longmapsto}
 \nc{\cv}{{{\mathsf c}{\mathsf v}}}
 \nc{\zq}{{\zeta_q}}
 \nc\qup{{q\uparrow 1}}
 \nc{\us}{\underset}
 \nc{\tn}{{\tilde{n}}}
 \nc{\gD}{{\Delta}}
 \nc{\bi}{{\bf i}}
 \nc{\bfone}{{\bf 1}}
 \nc{\bfa}{{\bf a}}
 \nc{\bfb}{{\bf b}}
 \nc{\bfc}{{\bf c}}
 \nc{\bfd}{{\bf d}}
 \nc{\bfe}{{\bf e}}
 \nc{\bff}{{\bf f}}
 \nc{\bfg}{{\bf g}}
 \nc{\bfi}{{\bf i}}
 \nc{\bfj}{{\bf j}}
 \nc{\bfn}{{\bf n}}
 \nc{\bfl}{{\bf l}}
 \nc{\bfk}{{\bf k}}
 \nc{\bfm}{{\bf m}}
 \nc{\bfo}{{\bf o}}
 \nc{\bfp}{{\bf p}}
 \nc{\bfq}{{\bf q}}
 \nc{\bfr}{{\bf r}}
 \nc{\bfs}{{\bf s}}
 \nc{\bft}{{\bf t}}
 \nc{\bfu}{{\bf u}}
 \nc{\bfv}{{\bf v}}
 \nc{\bfw}{{\bf w}}
 \nc{\bfx}{{\bf x}}
 \nc{\bfB}{{\bf B}}
 \nc{\bfP}{{\bf P}}
 \nc{\bfQ}{{\bf Q}}
 \nc{\bfY}{{\bf Y}}
 \nc{\bfgb}{{\boldsymbol \gb}}
 \nc{\bfga}{{\boldsymbol \ga}}
 \nc{\bfrho}{{\boldsymbol \rho}}
 \nc{\bfchi}{{\boldsymbol \chi}}
 \nc{\QX}{{\Q\langle \bfX\rangle}}
 \nc{\QY}{{\Q\langle \bfY\rangle}}
 \nc{\CX}{{\C\langle \bfX\rangle}}
 \nc{\CY}{{\C\langle \bfY\rangle}}
 \nc{\QXX}{{\Q\langle\!\langle \bfX\rangle\!\rangle}}
 \nc{\QYY}{{\Q\langle\!\langle \bfY\rangle\!\rangle}}
 \nc{\CXX}{{\C\langle\!\langle \bfX\rangle\!\rangle}}
 \nc{\CYY}{{\C\langle\!\langle \bfY\rangle\!\rangle}}
 \nc{\bbA}{{\mathbb A}}
 \nc{\bbB}{{\mathbb B}}
 \nc{\bbC}{{\mathbb C}}
 \nc{\bbD}{{\mathbb D}}
 \nc{\bbE}{{\mathbb E}}
 \nc{\bbF}{{\mathbb F}}
 \nc{\bbG}{{\mathbb G}}
 \nc{\bbH}{{\mathbb H}}
 \nc{\bbI}{{\mathbb I}}
 \nc{\bbJ}{{\mathbb J}}
 \nc{\bbK}{{\mathbb K}}
 \nc{\bbL}{{\mathbb L}}
 \nc{\bbM}{{\mathbb M}}
 \nc{\bbN}{{\mathbb N}}
 \nc{\bbO}{{\mathbb O}}
 \nc{\bbP}{{\mathbb P}}
 \nc{\bbQ}{{\mathbb Q}}
 \nc{\bbR}{{\mathbb R}}
 \nc{\bbS}{{\mathbb S}}
 \nc{\bbT}{{\mathbb T}}
 \nc{\bbU}{{\mathbb U}}
 \nc{\bbV}{{\mathbb V}}
 \nc{\bbW}{{\mathbb W}}
 \nc{\bbX}{{\mathbb X}}
 \nc{\bbY}{{\mathbb Y}}
 \nc{\bbZ}{{\mathbb Z}}
 \nc{\bba}{{\mathbb a}}
 \nc{\bbb}{{\mathbb b}}
 \nc{\bbc}{{\mathbb c}}
 \nc{\bbd}{{\mathbb d}}
 \nc{\bbe}{{\mathbb e}}
 \nc{\bbf}{{\mathbb f}}
 \nc{\bbg}{{\mathbb g}}
 \nc{\bbh}{{\mathbb h}}
 \nc{\bbi}{{\mathbb i}}
 \nc{\bbk}{{\mathbb k}}
 \nc{\bbl}{{\mathbb l}}
 \nc{\bbm}{{\mathbb m}}
 \nc{\bbn}{{\mathbb n}}
 \nc{\bbo}{{\mathbb o}}
 \nc{\bbp}{{\mathbb p}}
 \nc{\bbq}{{\mathbb q}}
 \nc{\bbr}{{\mathbb r}}
 \nc{\bbs}{{\mathbb s}}
 \nc{\bbt}{{\mathbb t}}
 \nc{\bbu}{{\mathbb u}}
 \nc{\bbv}{{\mathbb v}}
 \nc{\bbw}{{\mathbb w}}
 \nc{\bbx}{{\mathbb x}}
 \nc{\bby}{{\mathbb y}}
 \nc{\bbz}{{\mathbb z}}
 \nc{\calA}{{\mathcal A}}
 \nc{\calB}{{\mathcal B}}
 \nc{\calC}{{\mathcal C}}
 \nc{\calD}{{\mathcal D}}
 \nc{\calE}{{\mathcal E}}
 \nc{\calF}{{\mathcal F}}
 \nc{\calG}{{\mathcal G}}
 \nc{\calH}{{\mathcal H}}
 \nc{\calI}{{\mathcal I}}
 \nc{\calJ}{{\mathcal J}}
 \nc{\calK}{{\mathcal K}}
 \nc{\calL}{{\mathcal L}}
 \nc{\calM}{{\mathcal M}}
 \nc{\calN}{{\mathcal N}}
 \nc{\calO}{{\mathcal O}}
 \nc{\calP}{{\mathcal P}}
 \nc{\calQ}{{\mathcal Q}}
 \nc{\calR}{{\mathcal R}}
 \nc{\calS}{{\mathcal S}}
 \nc{\calT}{{\mathcal T}}
 \nc{\calU}{{\mathcal U}}
 \nc{\calV}{{\mathcal V}}
 \nc{\calW}{{\mathcal W}}
 \nc{\calX}{{\mathcal X}}
 \nc{\calY}{{\mathcal Y}}
 \nc{\calZ}{{\mathcal Z}}
  \nc{\cala}{{\mathcal a}}
 \nc{\calb}{{\mathcal b}}
 \nc{\calc}{{\mathcal c}}
 \nc{\cald}{{\mathcal d}}
 \nc{\cale}{{\mathcal e}}
 \nc{\calf}{{\mathcal f}}
 \nc{\calg}{{\mathcal g}}
 \nc{\calh}{{\mathcal h}}
 \nc{\cali}{{\mathcal i}}
 \nc{\calj}{{\mathcal j}}
 \nc{\calk}{{\mathcal k}}
 \nc{\call}{{\mathcal l}}
 \nc{\calm}{{\mathcal m}}
 \nc{\caln}{{\mathcal n}}
 \nc{\calo}{{\mathcal o}}
 \nc{\calp}{{\mathsf p}}
 \nc{\calq}{{\mathcal q}}
 \nc{\calr}{{\mathcal r}}
 \nc{\cals}{{\mathcal s}}
 \nc{\calt}{{\mathcal t}}
 \nc{\calu}{{\mathcal u}}
 \nc{\calv}{{\mathcal v}}
 \nc{\calw}{{\mathcal w}}
 \nc{\calx}{{\mathcal x}}
 \nc{\caly}{{\mathcal y}}
 \nc{\calz}{{\mathcal z}}
 \nc{\frakA}{{\mathfrak A}}
 \nc{\frakB}{{\mathfrak B}}
 \nc{\frakC}{{\mathfrak C}}
 \nc{\frakD}{{\mathfrak D}}
 \nc{\frakE}{{\mathfrak E}}
 \nc{\frakF}{{\mathfrak F}}
 \nc{\frakG}{{\mathfrak G}}
 \nc{\frakH}{{\mathfrak H}}
 \nc{\frakI}{{\mathfrak I}}
 \nc{\frakJ}{{\mathfrak J}}
 \nc{\frakK}{{\mathfrak K}}
 \nc{\frakL}{{\mathfrak L}}
 \nc{\frakM}{{\mathfrak M}}
 \nc{\frakN}{{\mathfrak N}}
 \nc{\frakO}{{\mathfrak O}}
 \nc{\frakP}{{\mathfrak P}}
 \nc{\frakQ}{{\mathfrak Q}}
 \nc{\frakR}{{\mathfrak R}}
 \nc{\frakS}{{\mathfrak S}}
 \nc{\frakT}{{\mathfrak T}}
 \nc{\frakU}{{\mathfrak U}}
 \nc{\frakV}{{\mathfrak V}}
 \nc{\frakW}{{\mathfrak W}}
 \nc{\frakX}{{\mathfrak X}}
 \nc{\frakY}{{\mathfrak Y}}
 \nc{\frakZ}{{\mathfrak Z}}
 \nc{\fraka}{{\mathfrak a}}
 \nc{\frakb}{{\mathfrak b}}
 \nc{\frakc}{{\mathfrak c}}
 \nc{\frakd}{{\mathfrak d}}
 \nc{\frake}{{\mathfrak e}}
 \nc{\frakf}{{\mathfrak f}}
 \nc{\frakg}{{\mathfrak g}}
 \nc{\frakh}{{\mathfrak h}}
 \nc{\fraki}{{\mathfrak i}}
 \nc{\frakj}{{\mathfrak j}}
 \nc{\frakk}{{\mathfrak k}}
 \nc{\frakl}{{\mathfrak l}}
 \nc{\frakm}{{\mathfrak m}}
 \nc{\frakn}{{\mathfrak n}}
 \nc{\frako}{{\mathfrak o}}
 \nc{\frakp}{{\mathfrak p}}
 \nc{\frakq}{{\mathfrak q}}
 \nc{\frakr}{{\mathfrak r}}
 \nc{\fraks}{{\mathfrak s}}
 \nc{\frakt}{{\mathfrak t}}
 \nc{\fraku}{{\mathfrak u}}
 \nc{\frakv}{{\mathfrak v}}
 \nc{\frakw}{{\mathfrak w}}
 \nc{\frakx}{{\mathfrak x}}
 \nc{\fraky}{{\mathfrak y}}
 \nc{\frakz}{{\mathfrak z}}
 \nc{\sha}{{\mbox{\cyr x}}}
\begin{document}

\title[Congruences of alternating multiple harmonic sums]
{Congruences of alternating multiple harmonic sums}

\author{Roberto Tauraso$^\dag$}
\author{Jianqiang Zhao$^\ddag$}

\maketitle

\begin{center}
{}$^\dag$Dipartimento di Matematica,
Universit\`a di Roma ``Tor Vergata'', Italy
\end{center}

\begin{center}
{}$^\ddag$Department of Mathematics, Eckerd College, St. Petersburg, FL 33711, USA\\
{}$^\ddag$Max-Planck Institut f\"ur Mathematik, Vivatsgasse 7, 53111 Bonn, Germany
\end{center}

\medskip
\noindent{\small {\bf Abstract.}
In this sequel to \cite{Tau1}, we continue to study the congruence
properties of the alternating version of multiple harmonic sums. As contrast to
the study of multiple harmonic sums where Bernoulli numbers and Bernoulli polynomials
play the key roles, in the alternating setting the Euler numbers
and the Euler polynomials are also essential.

\medskip
\noindent
{\bf Mathematics Subject Classification}: 11M41, 11B50.


\vskip0.7cm
\section{Introduction}
In proving a Van Hamme type congruence the first author was led to
consider some congruences involving alternating multiple harmonic sums
(AMHS for short) which are defined as follows.
Let $d>0$ and let $\ors:=(s_1,\dots, s_d)\in (\ZZ^*)^d$.
We define the \emph{alternating multiple harmonic sum} as
\begin{equation*}
 H(\ors;n):=\sum_{1\leq k_1<k_2<\dots<k_d\leq n}\;
\prod_{i=1}^d \frac{\sign(s_i)^{k_i}}{k_i^{|s_i|}}.
\end{equation*}
By convention we set $H(\ors;n)=0$ any $n<d$. We call
$\ell(\ors):=d$ and $|\ors|:=\sum_{i=1}^d |s_i|$ its
\emph{depth} and \emph{weight}, respectively. We point out
that $\ell(\ors)$ is sometimes called length in the literature.
When every $s_i$ is positive we recover the multiple harmonic
sums (MHS for short) whose congruence properties are
studied in \cite{Hmodp,Haspect,1stpart,2ndpart}.
There is another ``non-strict''
version of the AMHS defined as follows:
\begin{equation*}
 S(\ors;n):=\sum_{1\leq k_1\leq k_2\leq\dots\leq k_d\leq n}\;
\prod_{i=1}^d{\sign(s_i)^{k_i}\over k_i^{|s_i|}}.
\end{equation*}
By Inclusion and Exclusion Principle it is easy to see that
\begin{align} \label{equ:StoH}
S(\ors;n)=&\sum_{\orr\preceq \ors} H(\orr;n), \\
H(\ors;n)=&\sum_{\orr\preceq \ors} (-1)^{\ell(\ors)-\ell(\orr)}S(\orr;n),
\label{equ:HtoS}
\end{align}
where $\orr\prec \ors$ means $\orr$ can be obtained from $\ors$ by
combining some of its parts.

The main goal of this paper is to provide a systematic study of
the congruence property of $H(\ors;p-1)$ (and $S(\ors;p-1)$) for primes $p>|\ors|+2$
by using intimate relations between Bernoulli polynomials, Bernoulli
numbers, Euler polynomials, and Euler numbers. Throughout the paper, we often
use the abbreviation $S(-)=S(-;p-1)$ and $H(-)=H(-;p-1)$ if no confusion will arise.
The following congruences concerning harmonic sums will be crucial for us.
\begin{thm} \label{thm:sun}
 Let $k\in \NN$ and $p$ be a prime. Set
$\XX_p(k):=\frac{B_{p-k}}{p-k}-\frac{B_{2p-1-k}}{2(2p-1-k)}.$

(a) \emph{\cite[Theorem 5.1]{Sun}} If $p\ge k+3$ then
\begin{equation}\label{equ:SunThm5.1}
H(k)\equiv \left\{
             \begin{array}{ll}
         k(k+1)B_{p-2-k}p^2/2(p-2-k)  &\pmod{p^3}, \quad \hbox{ if $k$ is odd;} \\
          -2k \XX_p(k+1) p  &\pmod{p^3}, \quad \hbox{ if $k$ is even.}
             \end{array}
           \right.
\end{equation}

(b) \emph{\cite[Theorem 5.2]{Sun}} If $p\ge k+4$ then
\begin{equation}\label{equ:SunThm5.2}
H(k;(p-1)/2)\equiv \left\{
             \begin{array}{ll}
2(2^k-2)\XX_p(k)
    &\pmod{p^2},\quad  \hbox{ if $k>1$ is odd;} \phantom{\frac12} \\
 -k (2^{k+1}-1)\XX_p(k+1)  p
    & \pmod{p^3},\quad \hbox{ if $k$ is even;} \phantom{\frac12} \\
-2q_p+pq_p^2-\frac23p^2q_p^3- \frac7{12}p^2 B_{p-3}
    &\pmod{p^3},\quad  \hbox{ if $k=1$.}
             \end{array}
           \right.
\end{equation}
Here $q_p=(2^{p-1}-1)/p$ is the Fermat quotient.
\end{thm}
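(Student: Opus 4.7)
The plan is to obtain both parts by combining the classical symmetry trick $j \leftrightarrow p-j$ with the standard Glaisher/Carlitz congruences expressing $H(n;p-1)$ in terms of Bernoulli numbers, and then bootstrapping to modulus $p^3$.

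For part (a), I would begin with
\begin{equation*}
2H(k;p-1) = \sum_{j=1}^{p-1}\left(\frac{1}{j^k} + \frac{1}{(p-j)^k}\right),
\end{equation*}
expand $(p-j)^{-k} = (-1)^k j^{-k}(1-p/j)^{-k}$ as a formal series, and truncate modulo $p^3$. This yields
\begin{equation*}
2H(k) \equiv \bigl(1+(-1)^k\bigr) H(k) - (-1)^k\!\left(kp\, H(k+1) + \binom{k+1}{2}p^2 H(k+2)\right)\pmod{p^3}.
\end{equation*}
When $k$ is odd the right-hand $H(k)$ cancels and we can read $H(k)$ directly off the relation; since $k+1$ is even and $k+2$ is odd, I would substitute Glaisher's congruence $H(n;p-1)\equiv -\tfrac{n}{n+1}p\,B_{p-1-n}\pmod{p^2}$ for $n=k+1$ and observe that the $p^2 H(k+2)$ term vanishes mod $p^3$ (since $H(\text{odd})\equiv 0\pmod p$), producing the odd-case formula. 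When $k$ is even, $H(k)$ cancels from both sides; after dividing by $p$ one must extract the $p$-precise value of $H(k+1)$ modulo $p^2$, which is exactly where the refined combination $\XX_p(k+1)=B_{p-k-1}/(p-k-1)-B_{2p-2-k}/(2(2p-2-k))$ enters, via the Kummer-type congruence that controls $B_{p-m}/(p-m)\pmod{p^2}$.

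For part (b), the symmetry $j\mapsto p-j$ no longer preserves the range $1\le j\le (p-1)/2$, so I would use the complementary decomposition
\begin{equation*}
H(k;p-1) = H(k;(p-1)/2) + \sum_{i=1}^{(p-1)/2}\frac{1}{(p-i)^k}
\end{equation*}
and expand the second sum as a power series in $p/i$ to obtain a linear combination of $H(k+j;(p-1)/2)$, $j\ge 0$, equal to $H(k;p-1)$ modulo $p^3$. Solving this triangular system and substituting the values of $H(\cdot;p-1)$ from part (a) yields the stated congruences, with the factor $2^k-2$ (resp.\ $2^{k+1}-1$) arising naturally from the binomial coefficients in the expansion. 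The case $k=1$ is exceptional because the leading term is not a higher Bernoulli number but the Fermat quotient $q_p$; here I would use the classical identity relating $H(1;(p-1)/2)$ to $q_p$, namely $2 H(1;(p-1)/2)\equiv -2q_p+pq_p^2-\tfrac{2}{3}p^2 q_p^3\pmod{p^3}$ up to a $B_{p-3}$ correction coming from $H(3;p-1)$, which can be derived from Lerch's formula $q_p\equiv -\tfrac{1}{2}H(1;(p-1)/2)\pmod p$ lifted to $p^3$ via the same pairing trick applied to $2^{p}-2$.

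The main technical obstacle is the bookkeeping at order $p^2$: the term $B_{2p-1-k}/(2(2p-1-k))$ in $\XX_p(k)$ is invisible modulo $p^2$ but contributes a genuine correction modulo $p^3$, so every intermediate sum must be controlled to one order higher than the final congruence demands. Keeping track of these ``hidden'' $p$-order contributions throughout the recursion, and in particular reconciling them with the Kummer congruence $B_{p-k}/(p-k)\equiv B_{2p-1-k}/(2p-1-k)\pmod p$ that makes $\XX_p(k)$ a $p$-integral quantity, is the delicate part of the argument; the other ingredients are essentially formal manipulations of the binomial expansion of $(1-p/j)^{-k}$.
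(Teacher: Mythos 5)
First, a point of comparison you could not have known: the paper offers no proof of this theorem at all. It is imported verbatim from Z.-H.\ Sun's paper (the citations \cite[Theorems 5.1 and 5.2]{Sun} inside the statement are the ``proof''), so there is no internal argument to measure your proposal against; it must stand on its own.

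On its own terms, your sketch works only for the odd case of (a), and there is a structural reason the rest fails. The reflection $j\mapsto p-j$ (after fixing the sign: $(1-p/j)^{-k}=1+kp/j+\binom{k+1}{2}p^2/j^2+\cdots$, so the correction terms enter with $+(-1)^k$, not $-(-1)^k$) cancels $H(k)$ from both sides precisely when $k$ is \emph{even}, leaving only the relation $kpH(k+1)+\binom{k+1}{2}p^2H(k+2)\equiv 0 \pmod{p^3}$, which says nothing about $H(k)$ itself. The same happens in (b): your decomposition $H(k;p-1)=H(k;(p-1)/2)+\sum_{i\le (p-1)/2}(p-i)^{-k}$ eliminates $H(k;(p-1)/2)$ exactly when $k$ is odd. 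So in half the cases of (a) and half the cases of (b) — including the ones where the interesting quantities $\XX_p$ and $2^k-2$ appear — the symmetry trick annihilates the target sum and an independent computation is unavoidable. That computation is the evaluation of full and partial power sums $\sum_{j=1}^{n}j^{d}$ by Bernoulli polynomials, with the exponent first lifted by Euler's theorem ($j^{-k}\equiv j^{p^2(p-1)-k}\pmod{p^3}$) so that three $p$-adic digits survive; the two-term combination $\XX_p(k)$ records the irreducible difference between $B_{p-k}/(p-k)$ and $B_{2p-1-k}/(2p-1-k)$ beyond the mod-$p$ Kummer congruence, and the factors $2^k-2$ and $2^{k+1}-1$ come from evaluating Bernoulli polynomials at $1/2$ (via $B_n(1/2)=(2^{1-n}-1)B_n$) when the summation range is $1\le j\le (p-1)/2$. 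Your attributions — that $\XX_p$ enters ``via the Kummer-type congruence that controls $B_{p-m}/(p-m)\pmod{p^2}$'' (no such closed-form congruence exists; that is exactly why $\XX_p$ must be carried as a two-term package) and that the powers of $2$ ``arise from the binomial coefficients in the expansion'' (those coefficients are $\binom{k+m-1}{m}$ and produce no powers of $2$) — are both incorrect, and they paper over the genuinely missing half of the argument.
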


We now sketch the outline of the paper.
We start \S\ref{sec:properties} by recalling some important
relations among AMHS such as the stuffle and reversal relations.
Then we present some basic properties of Euler polynomials which
provide one of the fundamental tools for us in the alternating setting.
Then in \S\ref{sec:red} we describe two reduction procedures for $H(\ors)\pmod{p}$
general $\ors$, which are used to derive congruences in depth two and depth
three cases in \S\ref{sec:depth2} and \S\ref{sec:depth3}, respectively.
\begin{thm} \label{thm:reductionMain}
Let $a,\ell\in \NN$, $\ors=(s_1,\dots,s_\ell)\in (\ZZ^*)^\ell$
and $\ors'=(s_2,\dots,s_\ell)$. For every prime $p\ge a+2$ write $H(-)=H(-;p-1)$.
Then we have
the reduction formulae
\begin{align*}
   H(a,\ors)\equiv&  -\frac{1}{a} H\big((a-1)\oplus s_1, \ors'\big)-\frac12 H(a\oplus s_1, \ors')\\
 &+\sum_{k=2}^{p-1-a}  {p-a\choose k} \frac{B_k}{p-a} H\big((k+a-1)\oplus s_1, \ors'\big)\pmod{p},\\
   H(-a,\ors)\equiv& \frac{(1-2^{p-a})B_{p-a}}{p-a}  \Big(H(\ors)-H(-s_1, \ors')\Big)\\
  & -\sum_{k=0}^{p-2-a}{p-1-a\choose k}\frac{E_k(0)}{2}
    H\big((k+a)\oplus(-s_1), \ors'\big)\pmod{p},
\end{align*}
where $s\oplus t=\sign(st)(|s|+|t|)$ and $E_k(0)= 2(1-2^{k+1})B_{k+1}/(k+1).$
\end{thm}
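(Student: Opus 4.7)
My strategy is to peel off the innermost summation variable. Writing $k_1<k_2<\cdots<k_\ell$ for the summation indices carrying $\ors=(s_1,\ldots,s_\ell)$ and $k_0$ for the additional index attached to the leading weight $\pm a$, I separate
$$H(\pm a,\ors)=\sum_{1\le k_1<\cdots<k_\ell\le p-1}\left[\sum_{k_0=1}^{k_1-1}\frac{(\pm 1)^{k_0}}{k_0^a}\right]\prod_{i=1}^{\ell}\frac{\sign(s_i)^{k_i}}{k_i^{|s_i|}}.$$
Since $1\le k_0\le p-2$, Fermat's little theorem gives $1/k_0^a\equiv k_0^{p-1-a}\pmod p$, reducing the inner sum to a polynomial in $k_1$ via a closed-form power-sum formula. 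Applying Fermat once more to convert the resulting powers of $k_1$ back into denominators, the whole expression becomes a linear combination of AMHS of depth $\ell$.

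For the positive-sign case, the relevant identity is $\sum_{k=1}^{n-1}k^m=(B_{m+1}(n)-B_{m+1})/(m+1)$ with $m=p-1-a$, giving $H(a;k_1-1)\equiv (B_{p-a}(k_1)-B_{p-a})/(p-a)\pmod p$. Expanding $B_{p-a}(x)=\sum_{k=0}^{p-a}\binom{p-a}{k}B_k x^{p-a-k}$, the term with $k=p-a$ cancels $B_{p-a}$, and the contributions with $k=0$ and $k=1$ must be singled out, yielding the special coefficients $-1/a$ (from $1/(p-a)\equiv -1/a$) and $-1/2$ (from $B_1=-1/2$). A second application of Fermat converts $k_1^{p-a-k-|s_1|}$ into $k_1^{-(a+k-1+|s_1|)}$; combining with $\sign(s_1)^{k_1}/k_1^{|s_1|}$ produces exactly the index $(a+k-1)\oplus s_1$, and the first reduction formula drops out.

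For the negative-sign case, I use the Euler polynomial analogue, obtained by telescoping the defining relation $E_m(x+1)+E_m(x)=2x^m$:
$$\sum_{k=1}^{n-1}(-1)^k k^m=\frac{E_m(0)-(-1)^n E_m(n)}{2}.$$
Taking $m=p-1-a$ and $n=k_1$, the Appell expansion $E_m(k_1)=\sum_{k=0}^{m}\binom{m}{k}E_{m-k}(0)k_1^k$ splits the answer into two pieces. The constant piece contributes $E_{p-1-a}(0)[1-(-1)^{k_1}]/2$; summing against the outer factors and using $(-1)^{k_1}\sign(s_1)^{k_1}=\sign(-s_1)^{k_1}$ turns it into $\tfrac12 E_{p-1-a}(0)\bigl(H(\ors)-H(-s_1,\ors')\bigr)$, which via $E_{p-1-a}(0)=2(1-2^{p-a})B_{p-a}/(p-a)$ is the first line of the formula. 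The remaining piece, after reindexing $j=m-k$ and a final application of Fermat, yields the sum over $k$ with first index $(a+k)\oplus(-s_1)$. The main thing to watch is the bookkeeping of signs and exponent shifts modulo $p-1$; a small additional check is $p$-integrality in the boundary case $a=1$, where $B_{p-1}$ has a $p$ in its denominator but the factor $1-2^{p-1}=-pq_p$ compensates.
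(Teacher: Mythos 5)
Your proposal is correct and follows essentially the same route as the paper: Fermat's little theorem turns $1/k_0^{a}$ into $k_0^{p-1-a}$, the inner sum is evaluated in closed form via Bernoulli polynomials (resp.\ the Euler-polynomial identity $\sum_{k=1}^{n-1}(-1)^kk^m=\tfrac12\bigl(E_m(0)-(-1)^nE_m(n)\bigr)$, which is the paper's Lemma~\ref{lem:altsum}), and a second application of Fermat together with the absorption of $(-1)^{k_1}$ into $\sign(s_1)^{k_1}$ produces the shifted indices $(k+a-1)\oplus s_1$ and $(k+a)\oplus(-s_1)$. Your extraction of the $k=0,1$ terms, the splitting off of the constant piece $\tfrac12E_{p-1-a}(0)\bigl(H(\ors)-H(-s_1,\ors')\bigr)$, and the $p$-integrality remark for $a=1$ all match the paper's argument, so there is nothing to add.
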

In \S\ref{sec:depthAll} we deal with the homogeneous AMHS of arbitrary
depth and provide an explicit formula using the relation between the power
sum and elementary symmetric functions and the partition functions.
\S\ref{sec:wt4} is devoted to a comprehensive study of the weight four
AMHS in which identities involving Bernoulli numbers such as those
proved in \cite{1stpart} play the leading roles. For example,
by writing $H(-)=H(-;p-1)$ we find
the following interesting relations (see Proposition~\ref{prop:wt4depth2},
Proposition~\ref{prop:wt4depth3} and Proposition~\ref{prop:wt4depth3Eg}):
\begin{alignat*}{2}
 H(1,-3)\equiv  \frac12 H(-2,2) \equiv&
    \sum_{k=0}^{p-3} 2^k B_kB_{p-3-k} &\pmod{p}, \\
     H(-1,3)\equiv&  -\frac12 q_pB_{p-3} &\pmod{p},\\
H(1,-2,-1)\equiv&   H(1,-3) -\frac54 q_pB_{p-3} &\pmod{p}, \\
H(-1,1,-1,1)\equiv H(1,-1,1,-1)\equiv& -\frac1{12}
    \Big( q_pB_{p-3} + 2q_p^4\Big) &\pmod{p},\\
H(-1,1,1,-1)\equiv&  \frac1{12} \Big(6H(1,-3)+7q_pB_{p-3}+2q_p^4\Big)
    &\pmod{p},
\end{alignat*}
for all primes $p\ge 7$.
None of the above congruences can be obtained simply by the stuffle and
reversal relations.

After studying some special types of AMHS of weight four in \S\ref{sec:depthAll},
we turn to congruence relations involving lower weight AMHS
modulo higher powers of primes in the last section. One of the main ideas
in these sections is to relate AMHS to sums $U(\ors;n)$ and $V(\ors;n)$
defined by \eqref{equ:Udefn} and \eqref{equ:Vdefn}, respectively. These sums
appeared previously in congruences involving powers of Fermat quotient
(see \cite{Agoh,Bach,DS,Gran}).

Most of results of this paper were obtained while the second author was
visiting the Max-Planck-Institut f\"ur Mathematik whose support is
gratefully acknowledged.

\section{Properties of AMHS}\label{sec:properties}
\subsection{Stuffle relation.} \label{sec:stuffle}
The most important relation between
AMHS is the so called stuffle relation. It is possible to formalize
this using words as in \cite[\S2]{Zesum} or \cite[\S 2.2]{Rac}
which is a generalization of the MHS case (see \cite[\S2]{Hmodp}).
Unfortunately, for AMHS we don't have the integral representations
which provide another product structure for the alternating multiple
zeta values which are the infinite sum version of AMHS.

Fix a positive integer $n$. Let $\fA$ be the algebra generated
by letters $y_s$ for $s\in \ZZ^*$. Define a multiplication $\ast$
on $\fA$ by requiring that $\ast$ distribute over addition,
that $\bfone\ast w=w\ast \bfone=w$ for the empty word $\bfone$
and any word $w$, and that, for any two words $w_1,w_2$ and
two letters $y_s,y_t$ ($s,t\in \ZZ^*$)
\begin{equation}\label{equ:defnstuffle}
y_sw_1\ast y_t w_2 = y_s(w_1\ast y_t w_2) + y_t(y_s w_1\ast w_2)+
  y_{s\oplus t}(w_1\ast w_2)
\end{equation}
where $s\oplus t=\sign(st)(|s|+|t|)$. Then we get an algebra
homomorphism
\begin{align*}
 H:\quad (\fA,*) \quad\lra & \ \{H(\ors;n):\ors\in \ZZ^r,r\in \NN\}\\
 \qquad \bfone \qquad \lms &\qquad  1\\
 y_{s_1}\dots y_{s_r}\lms &\quad  H(s_1,\dots, s_r;n).
\end{align*}
For example,
\begin{multline*}
 H(-2;n)H(-3,2;n)=H(-2,-3,2;n)+H(-3,-2,2;n)+H(-3,2,-2;n)\\
 +H(5,2;n)+H(-3,-4;n).
\end{multline*}

There is another kind of relation caused by the reversal of the
arguments which we call the \emph{reversal relations}. For any
$\ors=(s_1,\dots,s_r)$ they have the form
\begin{equation}\label{equ:reversal}
\aligned
H(\ors;p-1)\equiv& \sign\Big(\prod_{j=1}^r s_j\Big)(-1)^r H(\ola{\ors};p-1) \pmod{p},\\
S(\ors;p-1)\equiv& \sign\Big(\prod_{j=1}^r s_j\Big)(-1)^r S(\ola{\ors};p-1) \pmod{p},
\endaligned
\end{equation}
for any odd prime $p>|\ors|$, where $\ola{\ors}=(s_r,\dots,s_1)$.

\subsection{Euler polynomials.}
In the study of congruences of MHS \cite{Hmodp,1stpart,2ndpart} we have
seen that Bernoulli numbers play the key roles by virtue of the following
identity: (\cite[p.~804, 23.1.4-7]{AS})
\begin{equation}\label{equ:sump}
\sum_{j=1}^{n-1} j^d=\sum_{r=0}^d {d+1\choose r}\frac{B_r}{d+1}
n^{d+1-r}, \quad \forall n,d\ge 1.
\end{equation}
In the case of AMHS, however,
the Euler polynomials and the Euler numbers are indispensable, too.
Recall that the Euler polynomials $E_n(x)$ are
defined by the generating function
\begin{equation*}
\frac{2 e^{tx} }{e^t+1}
=\sum_{n=0}^\infty E_n(x) \frac{t^n}{n!}.
\end{equation*}

\begin{lem} \label{lem:altsum}
Let $n\in \ZN$. Then we have
\begin{equation}\label{equ:altsumEuler}
 \sum_{i=1}^{d-1} (-1)^i  i^n= \frac 12\Big((-1)^{d-1}E_n(d)+ E_{n}(0)\Big)
=\sum_{a=0}^n {n \choose a} F_{n,d,a}  d^{n-a},
\end{equation}
where
\begin{equation*}
F_{n,d,a}=\left\{
            \begin{array}{ll}
               (-1)^{d-1} E_a(0)/2, & \hbox{if $a<n$;} \\
              (1-(-1)^d)E_n(0)/2, & \hbox{if $a=n>0$;} \\
              -(1+(-1)^d)/2, & \hbox{if $a=n=0$.}
            \end{array}
          \right.
\end{equation*}
Moreover, $E_0(0)=1$ and for all $a\in \NN$
\begin{equation}\label{equ:EulerBern}
E_a(0) = \frac{2^{a+1}}{a+1} \Big(B_{a+1}\Big(\frac{1}2\Big)
-B_{a+1}\Big)= \frac{2}{a+1}(1-2^{a+1})B_{a+1}.
\end{equation}
\end{lem}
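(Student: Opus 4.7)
\medskip

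\noindent\emph{Proof plan.}
The core tool is the classical difference equation for Euler polynomials,
\[
E_n(x+1)+E_n(x)=2x^n,
\]
which follows immediately from the defining generating function by multiplying the identity $\frac{2e^{(x+1)t}}{e^t+1}+\frac{2e^{xt}}{e^t+1}=2e^{xt}$ and reading off the coefficient of $t^n/n!$. Writing $i^n=\frac12\bigl(E_n(i+1)+E_n(i)\bigr)$ and summing over $i=1,\dots,d-1$ with alternating signs produces a telescoping sum: after reindexing $j=i+1$ in one of the two pieces, every $E_n(j)$ with $2\le j\le d-1$ appears with coefficients $(-1)^{j-1}$ and $(-1)^j$ which cancel, leaving only the two boundary contributions
\[
\sum_{i=1}^{d-1}(-1)^i i^n=\frac12\bigl((-1)^{d-1}E_n(d)-E_n(1)\bigr).
\]
Specialising the difference equation at $x=0$ gives $E_n(1)=2\cdot 0^n-E_n(0)$, so $E_n(1)=-E_n(0)$ for $n\ge 1$, which yields the claimed middle expression $\tfrac12\bigl((-1)^{d-1}E_n(d)+E_n(0)\bigr)$. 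The case $n=0$ has to be tracked separately since $E_0(1)=E_0(0)=1$; here the alternating sum simply equals $-(1+(-1)^d)/2$, matching the $a=n=0$ branch of $F_{n,d,a}$.

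Next I would establish the second equality by expanding $E_n(d)$ via the standard addition formula $E_n(x+c)=\sum_{a=0}^n\binom{n}{a}E_a(x)c^{n-a}$ (again read off from the generating function $\frac{2e^{(x+c)t}}{e^t+1}=e^{ct}\cdot\frac{2e^{xt}}{e^t+1}$). Setting $x=0,c=d$ gives $E_n(d)=\sum_{a=0}^n\binom{n}{a}E_a(0)d^{n-a}$. Substituting into the middle expression and splitting off the $a=n$ term produces, for $a<n$, the coefficient $\tfrac12(-1)^{d-1}E_a(0)=F_{n,d,a}$, and for $a=n\ge 1$ the coefficient $\tfrac12\bigl((-1)^{d-1}+1\bigr)E_n(0)=\tfrac12\bigl(1-(-1)^d\bigr)E_n(0)$ after re-sign — this matches $F_{n,n,n}$ once one notes the sign convention $(-1)^{d-1}=-(-1)^d$. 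The $n=0$ edge case then slots in as observed above.

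Finally, for the formula $E_a(0)=\tfrac{2^{a+1}}{a+1}\bigl(B_{a+1}(\tfrac12)-B_{a+1}\bigr)=\tfrac{2}{a+1}(1-2^{a+1})B_{a+1}$, I would pass through the Bernoulli polynomial generating function: the identity
\[
\frac{2e^{xt}}{e^t+1}=\frac{2te^{xt}}{e^{2t}-1}\cdot\frac{e^t-1}{t}=\frac{2^{a+1}}{a+1}\bigl[B_{a+1}(x/2)\text{-type terms}\bigr]
\]
can be formalised by writing $\frac{2}{e^t+1}=\frac{2}{e^t-1}-\frac{4}{e^{2t}-1}$ and multiplying by $e^{xt}$, which yields, after equating coefficients of $t^n/n!$,
\[
E_n(x)=\frac{2}{n+1}\Bigl(B_{n+1}(x)-2^{n+1}B_{n+1}\bigl(\tfrac{x}{2}\bigr)\Bigr).
\]
Evaluating at $x=0$ and using $B_{n+1}(0)=B_{n+1}$ gives both equalities. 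The case $E_0(0)=1$ is immediate from the generating function.

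The main obstacle is really just careful bookkeeping of the edge cases ($a=n$ in the symmetric function sum, and $n=0$ in the telescoping), since all three branches of $F_{n,d,a}$ arise from genuinely distinct sources; the analytic content of the proof is entirely contained in the one-line telescoping identity.
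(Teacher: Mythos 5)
Your proof is correct, but it reaches \eqref{equ:altsumEuler} by a different mechanism than the paper. The paper works entirely at the level of generating functions: it sums the finite geometric series $\sum_{i=1}^{d-1}(-e^t)^i=\frac{(-1)^{d-1}e^{dt}+1}{e^t+1}-1$ and reads off the coefficient of $t^n/n!$, the trailing $-1$ absorbing the $n=0$ anomaly that you instead handle by noting $E_0(1)=E_0(0)=1$. You work coefficient-by-coefficient, taking the difference equation $E_n(x+1)+E_n(x)=2x^n$ as the key lemma and telescoping $\sum_{i=1}^{d-1}(-1)^i\bigl(E_n(i+1)+E_n(i)\bigr)=(-1)^{d-1}E_n(d)-E_n(1)$; these are really two faces of the same computation (the telescoping is exactly the geometric-series summation after multiplying by $e^t+1$), but your version avoids manipulating formal power series for the main identity and makes the boundary terms, hence the three branches of $F_{n,d,a}$, visibly come from the endpoints $j=1$ and $j=d$. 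Both proofs then invoke the same expansion $E_n(d)=\sum_a\binom{n}{a}E_a(0)d^{n-a}$ for the second equality. One place where you do more than the paper: for \eqref{equ:EulerBern} the paper simply cites a handbook, whereas you derive $E_n(x)=\frac{2}{n+1}\bigl(B_{n+1}(x)-2^{n+1}B_{n+1}(x/2)\bigr)$ from the partial-fraction identity $\frac{2}{e^t+1}=\frac{2}{e^t-1}-\frac{4}{e^{2t}-1}$; note that setting $x=0$ there gives the rightmost expression of \eqref{equ:EulerBern} directly, while the middle expression with $B_{a+1}(\frac12)$ additionally needs the known value $B_{a+1}(\frac12)=(2^{-a}-1)B_{a+1}$ (equivalently the duplication formula), which you should state rather than fold into ``gives both equalities.'' Also, $F_{n,n,n}$ in your second paragraph should read $F_{n,d,n}$, a typo only.
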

\begin{proof}
Consider the generating function
 \begin{align}
  \sum_{n=0}^\infty\left( \sum_{i=1}^{d-1} (-1)^i  i^n \right) \frac{t^n}{n!}
=&  \sum_{i=1}^{d-1} (-1)^i e^{ti}  \notag  \\
=& \frac{  (-e^t)^d-1}{-e^t-1}-1    \notag \\
=& \frac{(-1)^{d-1} e^{dt}+1}{e^t+1}-1    \notag \\
=&\frac 12 \sum_{n=0}^\infty \Big((-1)^{d-1} E_n(d)+ E_{n}(0)\Big) \frac{t^n}{n!}-1.
\label{equ:genequ}
\end{align}
Now \eqref{equ:altsumEuler} follows from the notorious equation
(see for e.g., \cite[p.~805, 23.1.7]{AS})
\begin{equation}\label{equ:altsumEulerpoly}
E_n(x)=\sum_{a=0}^n {n \choose a} E_a(0) x^{n-a}
\end{equation}
for all $n>0$. Equation \eqref{equ:EulerBern} is also well-known
(see for e.g., item 23.1.20 on p.~805 of loc.\ cit.).
\end{proof}

\begin{rem}
The classical Euler numbers $E_k$ is defined by
$$\frac{2}{e^t+ e^{-t}} = \sum_{k=0}^{\infty} E_k  \frac{t^k}{k!} .$$
They are related to $E_k(0)$ by the formula (see \cite[p.~805, 23.1.7]{AS})
$$E_m(0)=\sum_{k=0}^m {m \choose k} \frac{E_k}{2^k}
    \left(-\frac{1}{2}\right)^{m-k}.$$
\end{rem}

\begin{cor} \label{cor:Hdepth1}
Let $a\in \ZN$ and $p$ be a prime such that $p\ge a+2$. Then
\begin{equation}\label{equ:Hdepth1}
H(-a;p-1)\equiv
    \left\{
      \begin{array}{ll}
         {\displaystyle -\frac{2(1-2^{p-a})}{a}B_{p-a} \text{\raisebox{-10pt}{\,} }}& \pmod{p},  \ \quad \hbox{if $a$ is odd;} \\
         {\displaystyle \frac{a(1-2^{p-1-a})}{a+1}p B_{p-1-a}} & \pmod{p^2}, \quad \hbox{if $a$ is even.}
      \end{array}
    \right.
\end{equation}
\end{cor}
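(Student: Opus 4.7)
The strategy is to reduce the negative powers in $H(-a;p-1)=\sum_{k=1}^{p-1}(-1)^k/k^a$ to positive powers via Fermat's little theorem and then apply Lemma~\ref{lem:altsum} to the resulting alternating power sum. When $a$ is odd it suffices to work modulo $p$: since $k^{-a}\equiv k^{p-1-a}\pmod p$ for $1\le k\le p-1$, we have $H(-a;p-1)\equiv\sum_{k=1}^{p-1}(-1)^k k^{p-1-a}\pmod p$. Applying Lemma~\ref{lem:altsum} with $d=p$ and $n=p-1-a$, every term in the expansion \eqref{equ:altsumEuler} with $a'<n$ carries at least one factor of $p$, so the sum reduces modulo $p$ to $E_{p-1-a}(0)=\frac{2(1-2^{p-a})B_{p-a}}{p-a}$ by \eqref{equ:EulerBern} (valid since $p-a$ is even). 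Substituting $\frac{1}{p-a}\equiv -\frac{1}{a}\pmod p$ then yields the first case of the claim.

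For $a$ even the same reasoning gives $E_{p-1-a}(0)=0$, since $p-a$ is odd and $\ge 3$, forcing $B_{p-a}=0$; this matches the divisibility of the answer by $p$ but requires sharpening both steps in order to obtain mod $p^2$ information. The plan is to use the Fermat quotient identity $k^{p-1}=1+pq_p(k)$ to derive $k^{-a}\equiv 2k^{p-1-a}-k^{2p-2-a}\pmod{p^2}$, so that
$$H(-a;p-1)\equiv 2\sum_{k=1}^{p-1}(-1)^k k^{p-1-a}-\sum_{k=1}^{p-1}(-1)^k k^{2p-2-a}\pmod{p^2}.$$
I would then apply Lemma~\ref{lem:altsum} to each sum and keep both the $a'=n$ and $a'=n-1$ terms of \eqref{equ:altsumEuler}. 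Using that both $E_{p-1-a}(0)$ and $E_{2p-2-a}(0)$ vanish (since $p-a$ and $2p-1-a$ are both odd and $\ge 3$), the two sums simplify via \eqref{equ:EulerBern} to $p(1-2^{p-1-a})B_{p-1-a}$ and $p(1-2^{2p-2-a})B_{2p-2-a}$ modulo $p^2$ respectively.

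To combine these I would invoke Kummer's congruence $\frac{B_{p-1-a}}{p-1-a}\equiv\frac{B_{2p-2-a}}{2p-2-a}\pmod p$, valid since $p-1\nmid p-1-a$ when $a\le p-3$, together with $2^{p-1}\equiv 1\pmod p$. These imply $p(1-2^{2p-2-a})B_{2p-2-a}\equiv\frac{a+2}{a+1}\,p(1-2^{p-1-a})B_{p-1-a}\pmod{p^2}$, and the identity $2-\frac{a+2}{a+1}=\frac{a}{a+1}$ produces the stated formula. The main obstacle will be the delicate bookkeeping between mod $p$ and mod $p^2$ information: Kummer's congruence and the Fermat quotient relation are mod $p$ statements that become sufficient only after being multiplied by the explicit prefactor $p$. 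A minor technicality is the case $a=1$, where $B_{p-1}$ is not $p$-integral; there one interprets the formula via von Staudt--Clausen, $pB_{p-1}\equiv -1\pmod p$, together with $1-2^{p-1}=-pq_p$, which recovers the familiar value $H(-1;p-1)\equiv -2q_p\pmod p$.
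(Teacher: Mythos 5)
Your argument is correct and follows essentially the same route as the paper: both rest on Lemma~\ref{lem:altsum}, the identity \eqref{equ:EulerBern}, and Kummer's congruences. The only real difference is cosmetic --- the paper lifts $k^{-a}$ to $k^{p(p-1)-a}$ modulo $p^2$ in one step via Euler's theorem, whereas you use the two-term expansion $k^{-a}\equiv 2k^{p-1-a}-k^{2p-2-a}\pmod{p^2}$; both yield the same pair of surviving Bernoulli terms, combined by the same Kummer congruence.
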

\begin{proof}
Taking $d=p$ and $n=p(p-1)-a$ in the Lemma we see that
\begin{align*}
H(-a;p-1)\equiv &  F_{p(p-1)-a,p,p(p-1)}+p(p(p-1)-a) F_{p(p-1)-a,p,p(p-1)-1-a} \\
\equiv & E_{p(p-1)-a}(0)-\frac12 p a E_{p(p-1)-1-a}(0) \pmod{p^2},
\end{align*}
since all the coefficients in \eqref{equ:altsumEuler} are $p$-integral
by \eqref{equ:EulerBern} and the property of Bernoulli numbers:
$B_m$ is not $p$-integral if and only if $p-1$ divides $m>0$.
Then the corollary directly follows from \eqref{equ:EulerBern}
and Kummer congruences
\begin{align*}
\frac{B_{p(p-1)-a}}{p(p-1)-a}\equiv &\frac{B_{p-1-a}}{p-1-a} & \hskip-3cm \pmod{p},\\
\frac{B_{p(p-1)-a+1}}{p(p-1)-a+1}\equiv& \ \frac{B_{p-a}}{p-a} &\hskip-3cm \pmod{p}.
\end{align*}
\end{proof}
\begin{rem} (a). The corollary can also be obtained from
\cite[Theorem~2.1]{Tau1} combined with \eqref{equ:SunThm5.2}. Notice that
both terms in \cite[Theorem~2.1]{Tau1} contribute nontrivially when
$a$ is even since the modulus is $p^2$. (b). Notice that if $a$ is odd we allow
$p=a+2$ to be a prime modulus unlike \cite[Lemma 5.1]{CD}. If $a$ is even
then the corollary is given also by \cite[(6.2)]{CD}.
\end{rem}

\subsection{Two reduction formulae}\label{sec:red}
We now prove two reduction formulae of $H(\ors)$ for
arbitrary composition $\ors$, corresponding to the two
cases where $\ors$ begins with a positive or a negative number.
\begin{thm} \label{thm:reduction}
Let $a,\ell\in \NN$, $\ors=(s_1,\dots,s_\ell)\in (\ZZ^*)^\ell$
and $\ors'=(s_2,\dots,s_\ell)$. For any prime $p\ge a+2$ write $H(-)=H(-;p-1)$. Then
\begin{equation}\label{equ:reduction}
   H(a,\ors)\equiv  -\frac{1}{a} H\big((a-1)\oplus s_1, \ors'\big)
   +\sum_{k=1}^{p-1-a}  {p-a\choose k} \frac{B_k}{p-a} H\big((k+a-1)\oplus s_1, \ors'\big)  \pmod{p}.
\end{equation}
\end{thm}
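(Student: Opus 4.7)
The plan is to isolate the innermost summation in $H(a,\ors;p-1)$ and evaluate it via Fermat's little theorem combined with Faulhaber's formula \eqref{equ:sump}. After writing
$$H(a,s_1,\ldots,s_\ell;p-1)=\sum_{1\le k_1<\cdots<k_\ell\le p-1}\left(\sum_{j=1}^{k_1-1}\frac{1}{j^a}\right)\prod_{i=1}^\ell \frac{\sign(s_i)^{k_i}}{k_i^{|s_i|}},$$
I would first replace $1/j^a\equiv j^{p-1-a}\pmod p$, valid because $1\le j\le p-2$, and then apply \eqref{equ:sump} with $d=p-1-a\ge 1$ (which holds thanks to $p\ge a+2$) and $n=k_1$ to obtain
$$\sum_{j=1}^{k_1-1}j^{p-1-a}=\sum_{r=0}^{p-1-a}\binom{p-a}{r}\frac{B_r}{p-a}\,k_1^{p-a-r}.$$

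A second application of Fermat's little theorem reduces $k_1^{p-a-r}$ to $k_1^{1-a-r}\pmod p$, and bundling this with $\sign(s_1)^{k_1}/k_1^{|s_1|}$ produces the factor $\sign(s_1)^{k_1}/k_1^{(r+a-1)+|s_1|}$. Summing over $1\le k_1<\cdots<k_\ell\le p-1$ together with the remaining factors $\sign(s_i)^{k_i}/k_i^{|s_i|}$ then recovers exactly $H((r+a-1)\oplus s_1,\ors')$, provided one reads $0\oplus s_1:=s_1$ in the corner case $r+a-1=0$. Extracting the $r=0$ contribution, whose coefficient $B_0/(p-a)=1/(p-a)\equiv -1/a\pmod p$, yields the advertised leading term $-\frac{1}{a}H((a-1)\oplus s_1,\ors')$, while the range $r=k=1,\ldots,p-1-a$ assembles into the displayed Bernoulli sum.

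The one substantive check is the $p$-integrality of every coefficient that appears, so that the stated congruence makes sense modulo $p$. The hypothesis $p\ge a+2$ gives $0<p-a<p$, whence $1/(p-a)$ is well-defined modulo $p$; and for each $1\le r\le p-1-a$ one has $0<r<p-1$, so $p-1\nmid r$ and the von Staudt--Clausen theorem ensures that the denominator of $B_r$ is coprime to $p$. The most delicate bookkeeping is at the boundary $r+a-1=0$, where the exponent of $k_1$ collapses to zero; but since $k_1^{p-1}\equiv 1\pmod p$ this produces no ambiguity and $H(s_1,\ors')$ really does appear with coefficient $-1$. Apart from this minor point, the entire argument is a straightforward interchange of summations combined with two invocations of Fermat's little theorem and one application of Faulhaber's formula.
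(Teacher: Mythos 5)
Your proposal is correct and follows essentially the same route as the paper: reduce the innermost sum $\sum_{j<k_1} j^{-a}\equiv\sum_{j<k_1} j^{p-1-a}$ via Fermat's little theorem, expand it by Faulhaber's formula \eqref{equ:sump}, and absorb the resulting power of $k_1$ into the first slot, with the $k=0$ term giving the coefficient $B_0/(p-a)\equiv -1/a$. The only difference is that you spell out the $p$-integrality of the coefficients and the boundary case $a=1$, which the paper leaves implicit.
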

\begin{proof}
By definition
\begin{align*}
H(a,\ors) \equiv&\sum_{j_\ell=1}^{p-1} \frac{\sign(s_\ell)^{j_\ell}}{j_\ell^{|s_\ell|} }
    \sum_{j_{\ell-1}=1}^{j_\ell-1}
    \frac{\sign(s_{\ell-1})^{j_{\ell-1}}}{j_{\ell-1}^{|s_{\ell-1}|}}
    \cdots \sum_{j_1=1}^{j_2-1}\frac{\sign(s_1)^{j_1}}{j_1^{|s_1|}}\sum_{j=1}^{j_1-1} j^{p-1-a} \\
\equiv&\sum_{k=0}^{p-1-a}  {p-a\choose k} \frac{B_k}{p-a} \sum_{j_\ell=1}^{p-1}
    \frac{\sign(s_\ell)^{j_\ell}}{j_\ell^{|s_\ell|} } \sum_{j_{\ell-1}=1}^{j_\ell-1}
    \frac{\sign(s_{\ell-1})^{j_{\ell-1}}}{j_{\ell-1}^{|s_{\ell-1}|}}
    \cdots \sum_{j_1=1}^{j_2-1}\frac{\sign(s_1)^{j_1}}{j_1^{|s_1|}} j_1^{p-a-k}
\end{align*}
which is exactly the right hand side of \eqref{equ:reduction}.
\end{proof}

\begin{thm} \label{thm:reduction2}
Let $a,\ell\in \NN$, $\ors=(s_1,\dots,s_\ell)\in (\ZZ^*)^\ell$
and $\ors'=(s_2,\dots,s_\ell)$. For any prime $p\ge a+2$ write $H(-)=H(-;p-1)$. Then
\begin{align*}
   H(-a,\ors)\equiv& \frac{(1-2^{p-a})B_{p-a}}{p-a}  \Big(H(\ors)-H(-s_1, \ors')\Big)\\
   -&  \sum_{k=0}^{p-2-a}{p-1-a\choose k} \frac{(1-2^{k+1})B_{k+1}}{k+1}
    H\big((k+a)\oplus(-s_1), \ors'\big)  \pmod{p}.
\end{align*}
\end{thm}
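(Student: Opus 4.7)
The proof parallels that of Theorem~\ref{thm:reduction}, but the standard power sum formula \eqref{equ:sump} must be replaced by the alternating sum formula from Lemma~\ref{lem:altsum}. I would start from the definition
\[
H(-a,\ors)=\sum_{1\le j<j_1<\dots<j_\ell\le p-1}\frac{(-1)^j}{j^a}\prod_{i=1}^\ell\frac{\sign(s_i)^{j_i}}{j_i^{|s_i|}},
\]
use Fermat's little theorem to replace $1/j^a$ by $j^{p-1-a}\pmod p$, and then compute the innermost sum $\sum_{j=1}^{j_1-1}(-1)^j j^{p-1-a}$ via Lemma~\ref{lem:altsum} with $n=p-1-a$ and $d=j_1$. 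This yields
\[
\sum_{j=1}^{j_1-1}(-1)^j j^{p-1-a}=\tfrac12\bigl(E_{p-1-a}(0)-(-1)^{j_1}E_{p-1-a}(j_1)\bigr),
\]
splitting $H(-a,\ors)$ into two pieces corresponding to the two summands.

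The first piece, arising from $\tfrac12 E_{p-1-a}(0)$, factors out to give $\tfrac12 E_{p-1-a}(0)\,H(\ors)$; by \eqref{equ:EulerBern} this equals $\frac{(1-2^{p-a})B_{p-a}}{p-a}\,H(\ors)$, which is the first summand in the statement. For the second piece, I would expand $E_{p-1-a}(j_1)$ by \eqref{equ:altsumEulerpoly} as $\sum_{k=0}^{p-1-a}\binom{p-1-a}{k}E_k(0)\,j_1^{p-1-a-k}$. The key observation is that $(-1)^{j_1}\sign(s_1)^{j_1}=\sign(-s_1)^{j_1}$, so after substitution the index $j_1$ effectively carries sign $\sign(-s_1)$ rather than $\sign(s_1)$ throughout this piece.

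Next I would peel off the top index $k=p-1-a$, whose contribution is $-\tfrac12 E_{p-1-a}(0)\,H(-s_1,\ors')=-\frac{(1-2^{p-a})B_{p-a}}{p-a}H(-s_1,\ors')$; combining with the first piece produces the parenthesized difference $H(\ors)-H(-s_1,\ors')$. For the remaining range $0\le k\le p-2-a$, I use Fermat once more to identify $j_1^{p-1-a-k}/j_1^{|s_1|}\equiv 1/j_1^{a+k+|s_1|}\pmod p$; since $|(k+a)\oplus(-s_1)|=k+a+|s_1|$ and $\sign((k+a)\oplus(-s_1))=\sign(-s_1)$, the resulting inner multiple sum is precisely $H\bigl((k+a)\oplus(-s_1),\ors'\bigr)$. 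A final application of \eqref{equ:EulerBern} rewrites $E_k(0)/2$ as $(1-2^{k+1})B_{k+1}/(k+1)$, yielding the claimed sum.

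The one nontrivial step is the bookkeeping around the $(-1)^{j_1}$ factor: it must combine with exactly one of the two pieces coming from Lemma~\ref{lem:altsum} to flip the sign of $s_1$, while the other piece keeps $s_1$ unchanged. This asymmetry is exactly what produces the difference $H(\ors)-H(-s_1,\ors')$ and has no analogue in the proof of Theorem~\ref{thm:reduction}; everything else is a routine expansion. A minor sanity check is that $p\ge a+2$ ensures $p-1-a\ge 1$ so the Fermat substitutions are valid and the binomial $\binom{p-1-a}{k}$ is $p$-integral in the displayed range.
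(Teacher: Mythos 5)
Your proposal is correct and follows essentially the same route as the paper's proof: both apply Lemma~\ref{lem:altsum} to the innermost sum $\sum_{j=1}^{j_1-1}(-1)^jj^{p-1-a}$, split off the constant piece $\tfrac12E_{p-1-a}(0)$ (giving $H(\ors)$) and the top term $k=p-1-a$ of the expansion of $(-1)^{j_1}E_{p-1-a}(j_1)$ (giving $-H(-s_1,\ors')$), absorb the remaining $(-1)^{j_1}$ into $\sign(s_1)^{j_1}$ to flip the sign of $s_1$, and finish with \eqref{equ:EulerBern}. The only cosmetic difference is that you expand $E_{p-1-a}(j_1)$ via \eqref{equ:altsumEulerpoly} explicitly where the paper invokes the coefficients $F_{n,d,a}$ from Lemma~\ref{lem:altsum}, which encode exactly the same case split.
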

\begin{proof}
By definition and Lemma \ref{lem:altsum}
\begin{align*}
H(a,\ors) \equiv&\sum_{j_\ell=1}^{p-1} \frac{\sign(s_\ell)^{j_\ell}}{j_\ell^{|s_\ell|}}
    \sum_{j_{\ell-1}=1}^{j_\ell-1}\frac{\sign(s_{\ell-1})^{j_{\ell-1}}}{j_{\ell-1}^{|s_{\ell-1}|}}
    \cdots \sum_{j_1=1}^{j_2-1}   \frac{\sign(s_1)^{j_1}}{j_1^{|s_1|} }
    \sum_{j=1}^{j_1-1}(-1)^j j^{p-1-a} \\
\equiv&\sum_{k=0}^{p-2-a}  {p-1-a\choose k} \frac{E_k(0)}{2}\sum_{j_\ell=1}^{p-1}
    \frac{\sign(s_\ell)^{j_\ell}}{j_\ell^{|s_\ell|}} \sum_{j_{\ell-1}=1}^{j_\ell-1}
    \frac{\sign(s_{\ell-1})^{j_{\ell-1}}}{j_{\ell-1}^{|s_{\ell-1}|}  }
    \cdots \sum_{j_1=1}^{j_2-1}   \frac{\sign(s_1)^{j_1}}{j_1^{|s_1|+k+a} }
    (-1)^{j_1-1} \\
&+\frac{E_{p-1-a}(0)}{2}
    \sum_{j_\ell=1}^{p-1} \frac{\sign(s_\ell)^{j_\ell}}{j_\ell^{|s_\ell|} }
    \sum_{j_{\ell-1}=1}^{j_\ell-1}
    \frac{\sign(s_{\ell-1})^{j_{\ell-1}}}{j_{\ell-1}^{|s_{\ell-1}|}}
    \cdots \sum_{j_1=1}^{j_2-1}\frac{\sign(s_1)^{j_1}}{j_1^{|s_1|}} \Big(1-(-1)^{j_1} \Big)\\
\equiv& \frac{E_{p-1-a}(0)}{2} \Big(H(\ors)-H(-s_1, \ors')\Big)
    -\sum_{k=0}^{p-2-a}  {p-1-a\choose k} \frac{E_k(0)}{2} H\big((k+a)\oplus(-s_1), \ors'\big).
\end{align*}
The theorem follows from \eqref{equ:EulerBern} easily.
\end{proof}

\section{AMHS of depth two}\label{sec:depth2}
In this section we will provide congruence formulae for all depth two AMHS.
All but one case are given by very concise values involving
Bernoulli numbers or Euler numbers (which are closely related by
the identity \eqref{equ:EulerBern}).
\begin{thm} \label{thm:depth2}
Let $a,b\in \NN$ and a prime $p\ge a+b+2$. Write $S(-)=S(-;p-1)$
and $H(-)=H(-;p-1)$. If $a+b$ is odd then we have
\begin{alignat}{2}
S(a,b)\equiv H(a,b)\equiv & \frac{(-1)^b }{a+b}{a+b\choose a} B_{p-a-b}&\pmod{p},  \label{equ:l2wtodd1} \\
H(-a,-b)\equiv&  \frac{2^{p-a-b}-1}{a+b}(-1)^b {a+b\choose a} B_{p-a-b} &\pmod{p}, \label{equ:l2wtodd2}  \\
S(-a,-b)\equiv&  \frac{2^{p-a-b}-1}{a+b} \left(2+(-1)^b{a+b\choose a} \right)B_{p-a-b} &\pmod{p}, \label{equ:l2wtodd2S}  \\
H(-a,b)\equiv H(a,-b)\equiv  &
\frac{1-2^{p-a-b}}{a+b}B_{p-a-b} &\pmod{p}, \label{equ:l2wtodd}\\
S(-a,b)\equiv S(a,-b)\equiv  &
\frac{2^{p-a-b}-1}{a+b}B_{p-a-b} &\pmod{p}. \label{equ:l2wtoddS}
\end{alignat}
If $a+b$ is even then we have
\begin{alignat}{2}\label{equ:l2wteven1}
S(a,b)\equiv H(a,b)\equiv& 0 &\pmod{p}, \\
 \label{equ:l2wteven2}
S(-a,-b)\equiv H(-a,-b)\equiv&  \frac{2(1-2^{p-a})(1-2^{p-a})}{ab}B_{p-a}B_{p-b} \qquad &\pmod{p}.
\end{alignat}
\end{thm}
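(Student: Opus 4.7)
The plan is to derive each identity from the reduction formulae of Theorem~\ref{thm:reduction} and Theorem~\ref{thm:reduction2}, combined with the stuffle and reversal relations of \S\ref{sec:stuffle} and the depth-one evaluation of Corollary~\ref{cor:Hdepth1}. I use throughout the standard facts that $H(m;p-1)\equiv 0\pmod p$ whenever $1\le m\le p-2$, while $H(p-1;p-1)\equiv -1\pmod p$, together with the vanishing of $B_n$ for every odd $n\ge 3$. The $S$-identities are deduced at the end from the $H$-identities via $S(s_1,s_2)=H(s_1,s_2)+H(s_1\oplus s_2;p-1)$.

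For the purely positive $H(a,b)$ I apply Theorem~\ref{thm:reduction} with $\ors=(b)$. The exponent $k+a+b-1$ equals $p-1$ only for $k=p-a-b$, so every other depth-one piece in the expression vanishes modulo $p$. Simplifying the unique surviving term via $\binom{p-a}{b}\equiv(-1)^b\binom{a+b-1}{b}\pmod p$, $\frac{1}{p-a}\equiv-\frac{1}{a}\pmod p$, and $\frac{1}{a}\binom{a+b-1}{b}=\frac{1}{a+b}\binom{a+b}{a}$ yields a single uniform expression giving \eqref{equ:l2wtodd1} when $a+b$ is odd and \eqref{equ:l2wteven1} when $a+b$ is even (automatically, since $B_{p-a-b}=0$ then).

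The alternating cases that stuffle plus reversal can handle directly are $H(a,-b),H(-a,b)$ with $a+b$ odd and $H(-a,-b)$ with $a+b$ even. For the first, the stuffle identity $H(a;p-1)H(-b;p-1)=H(a,-b)+H(-b,a)+H(-(a+b);p-1)$, combined with the reversal relation which in odd weight forces $H(-b,a)\equiv H(a,-b)\pmod p$, gives $H(a,-b)\equiv -\tfrac12H(-(a+b);p-1)\pmod p$ upon using $H(a;p-1)\equiv 0\pmod p$; Corollary~\ref{cor:Hdepth1} then evaluates the right-hand side as \eqref{equ:l2wtodd}, and a parallel stuffle treats $H(-a,b)$. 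For $H(-a,-b)$ with $a+b$ even, the even-weight reversal $H(-a,-b)\equiv H(-b,-a)\pmod p$ and the stuffle $H(-a)H(-b)=H(-a,-b)+H(-b,-a)+H(a+b;p-1)$ give $2H(-a,-b)\equiv H(-a)H(-b)-H(a+b;p-1)\pmod p$. Theorem~\ref{thm:sun}(a) forces $H(a+b;p-1)\equiv 0\pmod p$, and Corollary~\ref{cor:Hdepth1} evaluates $H(-a)H(-b)$, producing \eqref{equ:l2wteven2} after a quick parity check (both sides vanish when $a,b$ are both even; the Bernoulli product $B_{p-a}B_{p-b}$ appears when both are odd).

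The only case that resists this machinery is $H(-a,-b)$ with $a+b$ odd, where the off-diagonal terms cancel and the identity collapses to the tautology $H(-a)H(-b)\equiv H(a+b;p-1)\equiv 0\pmod p$. I instead apply Theorem~\ref{thm:reduction2} with $\ors=(-b)$: the sum over $k$ collapses to its unique non-vanishing contribution at $k=p-1-a-b$, producing $\frac{(-1)^b(2^{p-a-b}-1)}{a+b}\binom{a+b}{a}B_{p-a-b}\pmod p$ by the same binomial simplification as in the positive case. The leading ``diagonal'' term $\frac{(1-2^{p-a})B_{p-a}}{p-a}\bigl(H(-b;p-1)-H(b;p-1)\bigr)$ vanishes modulo $p$ in both remaining parity configurations---either $a$ is even so $B_{p-a}=0$, or $a$ is odd so $b$ is even and $H(\pm b;p-1)\equiv 0\pmod p$---leaving only the sum-contribution, which is exactly \eqref{equ:l2wtodd2}. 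The most delicate step is this last parity bookkeeping, and the key nontrivial input throughout is the collapse of the Bernoulli sums in Theorems~\ref{thm:reduction} and \ref{thm:reduction2} to single terms via the Fermat-Euler vanishing $H(m;p-1)\equiv 0\pmod p$.
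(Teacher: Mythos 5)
Your derivation is correct, but it takes a genuinely different route from the paper's. The paper's own proof of Theorem~\ref{thm:depth2} is essentially a list of citations: \eqref{equ:l2wtodd1} and \eqref{equ:l2wteven1} are quoted from \cite[Theorem~3.1]{1stpart}, the alternating congruences \eqref{equ:l2wtodd2}, \eqref{equ:l2wtodd} and \eqref{equ:l2wteven2} from \cite[Lemma~6.2]{CD}, and only the $S$-versions are actually derived, via the same diagonal identity $S(\ga,\gb)=H(\ga,\gb)+H(\ga\oplus\gb)$ that you use. You instead rederive all the $H$-congruences inside the paper's own framework: the collapse of the Bernoulli sums in Theorems~\ref{thm:reduction} and~\ref{thm:reduction2} to the single term with exponent $p-1$ (via $H(m;p-1)\equiv 0$ unless $p-1\mid m$), combined with stuffle, reversal and Corollary~\ref{cor:Hdepth1}. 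Every step checks out, including the parity bookkeeping for $H(-a,-b)$ with $a+b$ odd and the binomial simplification $\binom{p-a}{b}\equiv(-1)^b\binom{a+b-1}{b}$; note only that you are implicitly using the weight form $(-1)^{|\ors|}$ of the reversal relation, which is the correct one (the $(-1)^r$ printed in \eqref{equ:reversal} is a typo, as Lemma~\ref{lem:reversalModpsq} confirms). What your route buys is a self-contained proof independent of \cite{CD}, using exactly the mechanism the paper deploys later for Propositions~\ref{prop:wteven} and~\ref{prop:wteven2}.

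One caveat, which is a defect of the statement rather than of your argument. Your (and the paper's) identity $S(-a,-b)=H(-a,-b)+H(a+b;p-1)$ gives $S(-a,-b)\equiv H(-a,-b)\pmod p$, because the diagonal $k_1=k_2=k$ carries the sign $\bigl((-1)(-1)\bigr)^k=1$; this contradicts \eqref{equ:l2wtodd2S}, whose extra summand $\frac{2(2^{p-a-b}-1)}{a+b}B_{p-a-b}$ equals $H(-(a+b);p-1)$, i.e.\ corresponds to an alternating diagonal that is not present. A check at $p=5$, $(a,b)=(1,2)$ gives $H(-1,-2;4)\equiv -71/288\equiv 3$ and $H(3;4)\equiv 0$, hence $S(-1,-2;4)\equiv 3\pmod 5$, while the right-hand side of \eqref{equ:l2wtodd2S} is $\frac56\equiv 0\pmod 5$. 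So \eqref{equ:l2wtodd2S} as printed is false, and your method produces the corrected version $S(-a,-b)\equiv H(-a,-b)$. (Your computation likewise yields the factor $(1-2^{p-a})(1-2^{p-b})$ in \eqref{equ:l2wteven2}, repairing the evident typo there.)
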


\begin{proof} Congruences \eqref{equ:l2wtodd1} and \eqref{equ:l2wteven1} follow from
\cite[Theorem~3.1]{1stpart} (see \cite[Theorem~6.1]{Hmodp} for a different proof).
Congruences \eqref{equ:l2wtodd2}, \eqref{equ:l2wtodd}, and \eqref{equ:l2wteven2}
are given by \cite[Lemma 6.2]{CD} (notice that $(1-2^{p-1})B_{p-1}\equiv q_p \pmod{p}$).
Finally, congruences for $S$ version of AMHS
are all easy consequence of the $H$ version by the relation
$S(\ga,\gb)=H(\ga,\gb)+H(\ga\oplus\gb)$.
\end{proof}

Even though we don't have compact congruence formulae for $H(-a,b)$
and $H(a,-b)$ when $a+b$ is even we can prove two
general statements using the two reduction procedures
provided by Theorem~\ref{thm:reduction} and Theorem~\ref{thm:reduction2}.

\begin{prop}  \label{prop:wteven}
Let $a,b\in \NN$ and a prime $p\ge a+b+2$. Write $S(-)=S(-;p-1)$
and $H(-)=H(-;p-1)$. If $a+b$ is even then we have
\begin{multline}
 \label{equ:wteven}
S(a,-b)\equiv H(a,-b)
\equiv -S(-b,a)\equiv -H(-b,a)  \\
\equiv \sum_{k=0}^{p-a-1}{p-a\choose k}
\frac{2(1-2^{2p-a-b-k})B_kB_{2p-a-b-k}}{(p-a)(2p-a-b-k)} \pmod{p}.
\end{multline}
\end{prop}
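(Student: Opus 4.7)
The plan is to split the proposition into two pieces: the chain of four equivalences, and the explicit formula on the right. The equivalences $S(a,-b)\equiv H(a,-b)$ and $S(-b,a)\equiv H(-b,a)\pmod p$ follow from the stuffle identity $S(\ga,\gb)=H(\ga,\gb)+H(\ga\oplus\gb)$, which in both cases contributes the correction $H(-(a+b))$; since $a+b$ is even, Corollary~\ref{cor:Hdepth1} forces this correction to be $\equiv 0\pmod p$. The identification $H(a,-b)\equiv -H(-b,a)\pmod p$ is then the depth-two instance of the reversal relation \eqref{equ:reversal}, since $\sign(a(-b))(-1)^2=-1$.

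For the explicit formula, I apply Theorem~\ref{thm:reduction} with $\ors=(-b)$ and $\ors'=\emptyset$. Using $-\tfrac1a\equiv \tfrac1{p-a}\pmod p$ and $B_0=1$, the initial $-\tfrac1a H((a-1)\oplus(-b))$ can be merged with the sum as its $k=0$ term (inspection of the proof handles the convention $0\oplus s=s$ arising when $a=1$), giving
\begin{equation*}
H(a,-b)\equiv \sum_{k=0}^{p-1-a}\binom{p-a}{k}\frac{B_k}{p-a}\,H\big(-(k+a+b-1)\big)\pmod p.
\end{equation*}
Writing $N_k:=k+a+b-1$, the parity of $a+b$ forces $N_k$ to be odd precisely when $k$ is even. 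Corollary~\ref{cor:Hdepth1} then kills all odd-$k$ terms modulo $p$ (since $H$ of an even-weight single negative index is $\equiv 0\pmod p$), and for $k$ even produces $H(-N_k)\equiv -\tfrac{2(1-2^{p-N_k})}{N_k}B_{p-N_k}\pmod p$.

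The last step shifts Bernoulli indices from $p-N_k$ to $N_k':=2p-1-N_k=2p-a-b-k$. Since $N_k'-(p-N_k)=p-1$, Fermat gives $1-2^{p-N_k}\equiv 1-2^{N_k'}\pmod p$, and Kummer's congruence yields $\tfrac{B_{p-N_k}}{p-N_k}\equiv \tfrac{B_{N_k'}}{N_k'}\pmod p$ (both indices being positive and even). Combined with $p-N_k\equiv -N_k\pmod p$, this converts $-\tfrac{B_{p-N_k}}{N_k}$ into $\tfrac{B_{N_k'}}{N_k'}$, so each even-$k$ contribution becomes exactly the summand $\binom{p-a}{k}\tfrac{2(1-2^{N_k'})B_k B_{N_k'}}{(p-a)N_k'}$ of the proposition. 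To close the argument I verify that the odd-$k$ terms on the right-hand side also vanish: $B_k=0$ for odd $k\geq 3$, and for $k=1$ the factor $B_{2p-a-b-1}$ has odd index $\geq 3$ (because $a+b\leq p-2$), hence is zero. The main obstacle is the coordinated use of Fermat's little theorem (shifting the exponent of $2$ by $p-1$) and Kummer's congruence (shifting the Bernoulli index by $p-1$) together with the sign change from $p-N_k\equiv -N_k$, plus the parity bookkeeping identifying both sums as effectively ranging only over even $k$.
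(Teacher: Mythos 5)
Your route is the paper's own: absorb the leading term of Theorem~\ref{thm:reduction} as the $k=0$ summand to get $H(a,-b)\equiv\sum_{k=0}^{p-1-a}\binom{p-a}{k}\frac{B_k}{p-a}H(-(k+a+b-1))\pmod p$, evaluate the depth-one sums by Corollary~\ref{cor:Hdepth1}, and normalize with Fermat and Kummer; the preliminary chain of equivalences via stuffle and reversal is also correct (the paper leaves it implicit). However, there is a genuine gap in the middle step. Corollary~\ref{cor:Hdepth1} requires $p\ge N_k+2$, i.e.\ $k\le p-1-a-b$, whereas your $k$ runs up to $p-1-a$. For even $k\ge p+1-a-b$ one has $N_k=k+a+b-1\ge p$, so $p-N_k\le 0$: the quantity $B_{p-N_k}$ you write down is not defined, and your Kummer step explicitly asserts that ``both indices are positive,'' which fails exactly there. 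This tail is not empty of contributing terms: for $a=1$, $b=5$, $p=11$, the term $k=8$ has $N_k=13$ and $B_8\ne 0$, so the argument as written passes through $B_{-2}$.

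The repair is precisely the device the paper uses: split the sum at $a+b+k=p$. For $k\ge p-a-b$ first reduce the exponent modulo $p-1$ inside the defining sum (Fermat gives $H(-N_k)\equiv H(-(N_k-p+1))\pmod p$), and only then apply Corollary~\ref{cor:Hdepth1}; this produces $B_{2p-1-N_k}=B_{2p-a-b-k}$ directly, with no Kummer shift needed, and the congruence $N_k-p+1\equiv -(2p-a-b-k)\pmod p$ supplies the sign. For $k\le p-1-a-b$ your Fermat/Kummer normalization from $B_{p+1-a-b-k}$ to $B_{2p-a-b-k}$ is exactly the paper's. The two ranges happen to yield the same uniform summand, which is why your formal computation lands on the correct formula; but as written the derivation is invalid on the tail. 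The remaining bookkeeping in your proposal (vanishing of the odd-$k$ terms on both sides, the $k=0$ convention when $a=1$, the sign coming from $p-N_k\equiv -N_k$) is sound.
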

\begin{proof}  By Theorem~\ref{thm:reduction} we have
\begin{equation*}
H(a,-b)\equiv
\sum_{k=0}^{p-a-1} {p-a \choose k}  \frac{B_k}{p-a} \cdot H(-(a+b+k-1))\pmod{p}.
\end{equation*}
To use Cororllary~\ref{cor:Hdepth1} we need to break the sum into two parts, i.e., when
$a+b+k<p$ and when $a+b+k\ge p$. In the first case we can replace $k$ by $k+p-1$
and then to get the correct term in \eqref{equ:wteven} we only need to
use Fermat's Little Theorem $2^{p+1-a-b-k}\equiv 2^{2p-a-b-k}\pmod{p}$
and Kummer congruence
$B_{p+1-a-b-k}/(p+1-a-b-k)\equiv  B_{2p-a-b-k}/(2p-a-b-k)\pmod{p}$.
This finishes the proof of the proposition.
\end{proof}

The second reduction procedure, Theorem \ref{thm:reduction2}, provides us
another useful result on AMHS of even weight and depth two.
\begin{prop}  \label{prop:wteven2}
Let $a,b\in \NN$ and a prime $p\ge a+b+2$. Write $S(-)=S(-;p-1)$
and $H(-)=H(-;p-1)$. If $a+b$ is even then we have
\begin{multline*}
S(-a,b)\equiv H(-a,b)\equiv -S(b,-a)\equiv -H(b,-a) \\
\equiv
 \sum_{k=1}^{p-2-a-b} {p-1-a\choose k}
    \frac{ 2 (1-2^{k+1})(1-2^{p-a-b-k})B_{k+1} B_{p-a-b-k}}{(k+1)(a+b+k)} \\
 +\sum_{k=p-1-a-b}^{p-1-a} {p-1-a\choose k}
     \frac{2 (1-2^{k+1})(1-2^{2p-1-a-b-k})B_{k+1} B_{2p-1-a-b-k}}{(k+1)(1+a+b+k)}\pmod{p}.
\end{multline*}
\end{prop}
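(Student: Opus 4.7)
The plan is to mirror the proof of Proposition~\ref{prop:wteven}, replacing Theorem~\ref{thm:reduction} by Theorem~\ref{thm:reduction2}. I first dispose of the $S$ and reversed versions. By the stuffle relation, $S(-a,b)=H(-a,b)+H(-(a+b))$ and $S(b,-a)=H(b,-a)+H(-(a+b))$; since $a+b$ is even, Corollary~\ref{cor:Hdepth1} gives $H(-(a+b))\equiv 0\pmod p$, so $S(-a,b)\equiv H(-a,b)$ and $S(b,-a)\equiv H(b,-a)\pmod p$. The reversal relation \eqref{equ:reversal} then yields $H(b,-a)\equiv -H(-a,b)\pmod p$. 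Thus everything reduces to computing $H(-a,b)\pmod p$.

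Applying Theorem~\ref{thm:reduction2} with $\ors=(b)$, so that $(k+a)\oplus(-b)=-(k+a+b)$, gives
\begin{equation*}
H(-a,b)\equiv \frac{(1-2^{p-a})B_{p-a}}{p-a}\bigl(H(b)-H(-b)\bigr) -\sum_{k=0}^{p-2-a}\binom{p-1-a}{k}\frac{(1-2^{k+1})B_{k+1}}{k+1}\,H(-(k+a+b)) \pmod p.
\end{equation*}
I next absorb the leading term into the sum by formally extending the summation to $k=p-1-a$. Fermat's little theorem gives $H(-(p-1+b))\equiv H(-b)\pmod p$, so the new extreme summand equals $-\tfrac{(1-2^{p-a})B_{p-a}}{p-a}H(-b)$. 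If $a$ is even, then $p-a\ge 3$ is odd, so $B_{p-a}=0$ and both the leading term and the new summand vanish. If $a$ is odd, then Theorem~\ref{thm:sun}(a) gives $H(b)\equiv 0\pmod{p^2}$, so the leading term collapses to $-\tfrac{(1-2^{p-a})B_{p-a}}{p-a}H(-b)$, which matches the new $k=p-1-a$ summand exactly. Hence
\begin{equation*}
H(-a,b)\equiv -\sum_{k=0}^{p-1-a}\binom{p-1-a}{k}\frac{(1-2^{k+1})B_{k+1}}{k+1}\,H(-(k+a+b)) \pmod p.
\end{equation*}

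Now I evaluate each $H(-(k+a+b))$ via Corollary~\ref{cor:Hdepth1}, splitting the sum at $k=p-1-a-b$. For $0\le k\le p-2-a-b$, the argument $k+a+b\le p-2$ is in the corollary's direct range; since $a+b$ is even, $k+a+b$ shares parity with $k$, and the parity vanishing of Bernoulli numbers forces $B_{k+1}$ or $H(-(k+a+b))$ to vanish unless $k$ is odd. For odd $k$, substituting $H(-(k+a+b))\equiv -\tfrac{2(1-2^{p-a-b-k})B_{p-a-b-k}}{k+a+b}\pmod p$ produces the first sum. For $p-1-a-b\le k\le p-1-a$, $k+a+b\ge p-1$, and Fermat's little theorem gives $H(-(k+a+b))\equiv H(-(k+a+b-p+1))\pmod p$, shifting the argument into $[0,b]$. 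Again only odd $k$ contribute, and applying Corollary~\ref{cor:Hdepth1} to the shifted argument (noting that $k+a+b-p+1\equiv k+a+b+1\pmod p$ converts the denominator) yields the second sum with $B_{2p-1-a-b-k}$.

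The main technical obstacle is the boundary absorption at $k=p-1-a$: verifying that the leading factor of Theorem~\ref{thm:reduction2} matches the extreme summand in both parity cases for $a$, the odd-$a$ case requiring the additional input $H(b)\equiv 0\pmod{p^2}$ from Theorem~\ref{thm:sun}(a). Beyond this, the parity bookkeeping (which pins down that only odd $k$ contribute in both regimes) and the routine application of Corollary~\ref{cor:Hdepth1} together with Fermat's little theorem complete the argument.
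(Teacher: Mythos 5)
Your proof is correct and follows essentially the same route as the paper: apply Theorem~\ref{thm:reduction2} with $\ors=(b)$, use $H(b)\equiv 0\pmod p$ to absorb the leading term as the $k=p-1-a$ summand, and then evaluate $H(-(k+a+b))$ via Corollary~\ref{cor:Hdepth1}, splitting the range and using Fermat's little theorem and the parity vanishing of Bernoulli numbers exactly as in Proposition~\ref{prop:wteven}. The only difference is that you supply the boundary and parity bookkeeping that the paper leaves implicit (and your sign convention for the sum is the consistent one).
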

\begin{proof}
By Theorem \ref{thm:reduction2} we have
\begin{equation*}
   H(-a,b)\equiv \sum_{k=0}^{p-1-a}{p-1-a\choose k} \frac{(1-2^{k+1})B_{k+1}}{k+1}
    H\big(-(k+a+b)\big) \pmod{p}
\end{equation*}
since $H(b)\equiv 0\pmod{p}$. The rest follows from Cororllary~\ref{cor:Hdepth1} similar
to the proof of Proposition~\ref{prop:wteven}.
\end{proof}

The two propositions above will be used in \S\ref{sec:wt4}
to compute some AMHS congruences explicitly.

\section{AMHS of depth three}\label{sec:depth3}
All the congruences in this section are modulo a prime $p$.
Write $H(-)=H(-;p-1)$.
Recall that for depth three MHS modulo $p$ can be determined
\cite[Theorem~3.5]{1stpart}, \cite[Theorem~6.2]{Hmodp}
and \cite[(3.13)]{1stpart}.
For AMHS, we first observe that for any $\ga,\gb,\gam\in \ZZ$
\begin{align*}
 H(\ga,\gb,\gam)=&H(\ga)H(\gb)H(\gam)-H(\gam)H(\gb,\ga)-H(\gam)H(\gb\oplus\ga)\\
 -&H(\gam,\gb)H(\ga)-H(\gam\oplus\gb)H(\ga)+H(\gam,\gb,\ga)\\
 +&H(\gam\oplus\gb,\ga)+H(\gam,\gb\oplus \ga)+H(\gam\oplus\gb\oplus\ga).
 \end{align*}
This can be easily checked by stuffle relations but the idea is hidden in the
general framework set up by Hoffman \cite{H}.
Combining with the reversal relations we can obtained the following results
without much difficulty. We leave its proof to the interested reader.
\begin{thm}
Let $p$ be a prime and $a,b,c\in \NN$ such that
$p>w$ where $w:=a+b+c$. Write $H(-)=H(-;p-1)$. Then we have

(1). If $w$ is even then
\begin{alignat}{2}
2 H(a,-b,c)\equiv & H(-c-b,a)+H(c,-b-a)  &\pmod{p}, \label{equ:a-bcEven}\\
2 H(a,b,-c)\equiv & -H(-c)H(b,a)+H(-c-b,a)+H(-c,b+a) &\pmod{p}, \label{equ:ab-cEven}\\
2 H(-a,-b,-c)\equiv &-H(-c)H(-b,-a)-H(-c,-b)H(-a)\\
 \ & \hskip2cm +H(c+b,-a)+H(-c,a+b) &\pmod{p}. \label{equ:-a-b-cEven}
\end{alignat}

(2). If $w$ is odd then
\begin{alignat}{2}
2 H(a,-b,-c)\equiv &H(c+b,a)+H(-c,-b-a)-H(-c)H(-b,a) &\pmod{p},\label{equ:a-b-cOdd} \\
2 H(-a,b,-c)\equiv &-H(-c)H(b,-a)-H(-c,b)H(-a)\\
 \ & \hskip2cm +H(-c-b,-a)+H(-c,-b-a) &\pmod{p}. \label{equ:-ab-cOdd}
\end{alignat}
\end{thm}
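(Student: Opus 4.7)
My plan is to combine the stuffle-type decomposition of $H(\ga,\gb,\gam)$ displayed just before the theorem with the reversal relations~\eqref{equ:reversal}, then prune away the terms that vanish modulo $p$. Subtracting $H(\gam,\gb,\ga)$ from both sides of the decomposition gives
\begin{multline*}
H(\ga,\gb,\gam) - H(\gam,\gb,\ga) = H(\ga)H(\gb)H(\gam) - H(\gam)H(\gb,\ga) - H(\gam)H(\gb\oplus\ga)\\
{}- H(\gam,\gb)H(\ga) - H(\gam\oplus\gb)H(\ga) + H(\gam\oplus\gb,\ga) + H(\gam,\gb\oplus\ga) + H(\gam\oplus\gb\oplus\ga).
\end{multline*}
In each of the five specialisations $(\ga,\gb,\gam)$ that appear in the theorem, a direct check of signs and weights in~\eqref{equ:reversal} yields $H(\ga,\gb,\gam) \equiv -H(\gam,\gb,\ga) \pmod p$; hence the left-hand side is congruent to $2H(\ga,\gb,\gam)$ modulo $p$, and the problem reduces to simplifying the right-hand side.

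The simplification uses three standard vanishings. First, Theorem~\ref{thm:sun}(a) gives $H(k;p-1)\equiv 0\pmod p$ for every $1\le k\le p-2$, which removes every summand containing a factor $H(k)$ with positive argument (e.g.\ $H(a)$, $H(b+c)$, the whole triple product $H(\ga)H(\gb)H(\gam)$ whenever at least one of $\ga,\gb,\gam$ is positive). Second, Corollary~\ref{cor:Hdepth1} gives $H(-k;p-1)\equiv 0\pmod p$ whenever $k$ is positive and even; this disposes of $H(-w)$ in the even-weight identities and, in the identity~\eqref{equ:-a-b-cEven}, also of the triple product $H(-a)H(-b)H(-c)$, since $w$ even forces the number of odd entries in $\{a,b,c\}$ to be even, and hence at least one entry to be even. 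Third, the products of shape $H(-k_1)H(-k_2)$ arising in the odd-weight identities~\eqref{equ:a-b-cOdd} and~\eqref{equ:-ab-cOdd}---for example $H(-c)H(-(a+b))$---also vanish: when $w$ is odd the indices $c$ and $a+b$ have opposite parities, so one of the two factors is $\equiv 0\pmod p$.

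After these vanishings are applied case by case, the right-hand side collapses to precisely the expression appearing on the right of~\eqref{equ:a-bcEven}--\eqref{equ:-ab-cOdd}, and any depth-two leftovers can be brought into the exact form stated in the theorem by a further application of~\eqref{equ:reversal}. The main---though still routine---obstacle is the parity bookkeeping on the triple $(a,b,c)$ that drives the three vanishings above, together with matching the surviving terms to the stated right-hand sides; this splits into a small number of parity subcases for each of the five identities and is exactly the "without much difficulty" step that the authors leave to the reader.
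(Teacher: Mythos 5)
Your proposal is correct and follows exactly the route the authors intend (they only sketch it): apply the displayed stuffle decomposition of $H(\ga,\gb,\gam)$, use the reversal relation to replace $H(\gam,\gb,\ga)$ by $-H(\ga,\gb,\gam)$ in each of the five cases, and kill the remaining terms via $H(k)\equiv 0$ for $1\le k\le p-2$, $H(-k)\equiv 0$ for $k$ even, and the parity argument for the products $H(-k_1)H(-k_2)$ with $k_1+k_2=w$. The only caveat is that the sign factor in \eqref{equ:reversal} must be read as $(-1)^{|\ors|}$ rather than the misprinted $(-1)^r$ (as confirmed by Lemma~\ref{lem:reversalModpsq}), which is evidently what you used since your parity check in each case correctly yields $H(\ga,\gb,\gam)\equiv -H(\gam,\gb,\ga)\pmod p$.
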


Because of the reversal relations when the weight is even there remains
essentially only one more case to consider in
depth three. This is given by the next result which will be used
in \S\ref{sec:wt4}.
\begin{thm}\label{thm:wtevendepth3}
Let $a,b,c$ be positive integers such  that $w:=a+b+c$ is even.
Then for any prime $p\ge w+3$  we have
\begin{multline*}
H(a,-b,-c)\equiv -\sum_{k=2}^{p-w+1}  {p-a\choose p-w-k+1}
     {k+c-1\choose c} \frac{(1-2^k)B_k B_{p-w-k+1}}{ak} \\
  - \sum_{k=p+1-b-c}^{p-c}  {p-a\choose 2p-w-k}
     {k+c-1\choose c} \frac{(1-2^{k})B_k B_{2p-w-k}}{ak} \\
  - \frac{(1-2^{p-c})(1-2^{p-a-b})B_{p-a-b}B_{p-c}}{(a+b)c} .
\end{multline*}
\end{thm}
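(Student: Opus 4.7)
The plan is to apply the first reduction formula, Theorem~\ref{thm:reduction}, to the positive leading entry $a$. This writes
\[
H(a,-b,-c)\equiv \sum_{r=0}^{p-1-a}\binom{p-a}{r}\frac{B_r}{p-a}\,H\bigl(-(r+a+b-1),-c\bigr)\pmod p,
\]
reducing the problem to evaluating a family of depth-two AMHS; the $r=0$ coefficient $1/(p-a)\equiv -1/a\pmod p$ is subsumed into the sum. Since $B_r=0$ for odd $r\ge 3$, only $r=0$, $r=1$, and even $r\ge 2$ survive. The virtual weight $r+w-1$ of the inner depth-two AMHS is even exactly at $r=1$ and odd for every other surviving $r$.

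The $r=1$ term uses \eqref{equ:l2wteven2}: one has $H(-(a+b),-c)\equiv \frac{2(1-2^{p-a-b})(1-2^{p-c})}{(a+b)c}B_{p-a-b}B_{p-c}$, and the coefficient $\binom{p-a}{1}B_1/(p-a)=-1/2$ produces precisely the isolated third term of the theorem. For $r=0$ and even $r\ge 2$ with $r\le p-w-1$ (the ``small'' range), \eqref{equ:l2wtodd2} applies directly, giving
\[
H(-(r+a+b-1),-c)\equiv \frac{2^{p-r-w+1}-1}{r+w-1}(-1)^c\binom{r+w-1}{c}B_{p-r-w+1}\pmod p.
\]
After substitution and the change of variables $K=p-r-w+1\in[2,p-w+1]$, the elementary congruences $1/(p-a)\equiv -1/a$, $1/(r+w-1)\equiv -1/K$ and $\binom{p-K}{c}\equiv(-1)^c\binom{K+c-1}{c}\pmod p$ (upper negation applied to $\binom{p-K}{c}$) convert the partial sum termwise into the first displayed sum of the theorem.

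For $r\ge p-w+c+1$ (the ``clean overflow'' range, where $m=r+a+b-1\ge p$), Fermat's little theorem yields $H(-m,-c)\equiv H(-m',-c)\pmod p$ with $m'=r+a+b-p\ge 1$, after which \eqref{equ:l2wtodd2} applies to the shifted AMHS; equivalently, one replaces $B_{p-r-w+1}$ by $B_{2p-r-w}$ via Kummer's congruence and $2^{p-r-w+1}$ by $2^{2p-r-w}$ via Fermat. Setting $k=2p-r-w$ then transports this range of $r$ onto $k\in[p+1-b-c,p-c-1]$ and reproduces the corresponding terms of the second sum. The remaining boundary value $r=p-w+c$ (meaningful only when $c$ is odd, since otherwise $B_r=0$, and corresponding to the intermediate virtual exponent $m=p-1$) is handled by $H(-(p-1),-c)\equiv -2^{-c}H(c;(p-1)/2)\pmod p$ together with Theorem~\ref{thm:sun}(b), and a direct verification shows this contribution is exactly the $k=p-c$ term of the second sum, completing the stated range $k\in[p+1-b-c,p-c]$. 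The main obstacle is this overflow step: one must track with matching signs three simultaneous shifts — in the power of two, in the Bernoulli subscript (using Kummer, whose hypothesis that the even subscript is not divisible by $p-1$ is readily verified from $p\ge w+3$), and in the binomial coefficient $\binom{r+w-1}{c}\to\binom{k+c-1}{c}\pmod p$ — and then merge the boundary case with the shifted formula to obtain the clean range stated.
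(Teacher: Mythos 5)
Your overall strategy --- one application of Theorem~\ref{thm:reduction} followed by the closed-form depth-two congruences of Theorem~\ref{thm:depth2} --- differs from the paper's, which after the Bernoulli reduction applies the Euler-polynomial expansion of Lemma~\ref{lem:altsum} to the inner alternating power sum and never passes through depth-two AMHS of large weight. Your small range, your $r=1$ term, your boundary value $r=p-a-b$, and your range $m=r+a+b-1\ge p$ all check out and reproduce the corresponding pieces of the stated formula. But your case analysis has a hole: the indices $r$ with $p-w\le r\le p-a-b-1$, equivalently $p-1-c\le m\le p-2$, are never treated. There the exponent $m$ is still an admissible AMHS exponent, so no Fermat shift of $m$ is available, yet the weight $m+c$ lies in $[p-1,\,p+c-2]$, so the hypothesis $p\ge\text{weight}+2$ of \eqref{equ:l2wtodd2} fails and you may not quote it. For $c\ge2$ this window contains even values of $r$ (for instance $r=p-w+1$, giving $m=p-c$) with $B_r\neq0$, so these terms cannot be discarded out of hand; since the theorem's two sums contain no term at all for these $r$, your argument implicitly asserts that each such $H(-m,-c)$ vanishes mod $p$ without saying why.

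They do vanish, but proving it requires exactly the kind of computation the paper's Euler-polynomial route performs automatically: writing $m=p-1-c+u$ with $1\le u\le c-1$ and $v=c-u$, Fermat gives $H(-m,-c)\equiv\sum_{i<j}(-1)^{i+j}i^{v}j^{-c}$, and Lemma~\ref{lem:altsum} reduces this to $\tfrac12E_v(0)H(-c)$ modulo $p$ (the contribution of $\sum_j E_v(j)j^{-c}$ dies because $v-s-c\not\equiv0\pmod{p-1}$ for $0\le s\le v$ when $v<c\le p-5$); parity then kills every such term, since $r=p-w+u$ even forces $u$ odd, whence either $c$ is even and $H(-c)\equiv0\pmod p$, or $c$ is odd, $v$ is even $\ge2$ and $E_v(0)=0$. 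You need to supply this (or an equivalent) verification. A smaller point: your ``equivalently'' gloss on the overflow range is not literally correct, because replacing only $B_{p-r-w+1}$ and $2^{p-r-w+1}$ in the small-range formula ignores the changes $1/(r+w-1)\to1/(r+w-p)$ and $\binom{r+w-1}{c}\to\binom{r+w-p}{c}$, which are not termwise congruent mod $p$; the primary route through the shifted sum $H(-m',-c)$ is the one that works, so keep that and drop the gloss.
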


\begin{proof} The proof is essentially a repeated application
of Theorem~\ref{thm:reduction}. But we spell out all the details below
because there are some subtle details that we need to attend to.

By \eqref{equ:sump} and Fermat's Little Theorem
we have modulo $p$
\begin{align*}
H(a,-b,-c)\equiv&\sum_{i=1}^{p-1} \frac{(-1)^i}{ i^{c}}\sum_{j=1}^{i-1}
    (-1)^j j^{p-1-b} \sum_{k=1}^{j-1} k^{p-1-a}\\
    \equiv &\sum_{i=1}^{p-1}  \frac{(-1)^i}{ i^{c}}
     \sum_{k=0}^{p-1-a}  {p-a\choose k} \frac{B_k}{p-a}\sum_{j=1}^{i-1}  (-1)^j j^{\rho(k)+p-a-b-k},
\end{align*}
where $\rho(k)=0$ if $k< p-a-b$ and $\rho(k)=p-1$ if $k\ge p-a-b$
(to make sure all exponents are positive in the sum of the second line above).
By Lemma \ref{lem:altsum}
\begin{align}
H(a,-b,-c)\equiv & \sum_{k=0}^{p-1-a}  {p-a\choose k} \frac{B_k}{p-a}
    \sum_{r=0}^n {n \choose r}  \sum_{i=1}^{p-1}  (-1)^i i^{n-r-c} F_{n,i,r} \notag\\
 \equiv &  -\sum_{k=0}^{p-1-a}  {p-a\choose k} \sum_{r=1}^{n-1}
     {n\choose r} \frac{(1-2^{r+1})B_k B_{r+1}}{(p-a)(r+1)}
      \sum_{i=1}^{p-1} i^{n-c-r}  \notag\\
  & +\sum_{k=0}^{p-1-a}  {p-a\choose k}  \frac{(1-2^{n+1})B_k B_{n+1}}{(p-a)(n+1)}
 \sum_{i=1}^{p-1} ((-1)^i-1) i^{-c}, \notag
\end{align}
where $n=\rho(k)+p-a-b-k$. Here we have used the fact that when $r=0$
we have $F_{n,i,r}=(-1)^{i-1}$ and thus the inner sum is
$\sum_{i=1}^{p-1} i^{n-c}\equiv 0 \pmod{p}$ except when $p-1|n-c$, i.e.,
except when $n=c$ and $k=p-w$. But then $B_k=0$ since $w$ is
even by assumption. Thus
\begin{align}
H(a,-b,-c)  \equiv & \sum_{\substack{ 0\le k< p-a\\ c<n}}  {p-a\choose k}
     {n\choose n-c} \frac{(1-2^{n-c+1})B_k B_{n-c+1}}{(p-a)(n-c+1)} \notag\\
  & +H(-c)\sum_{k=0}^{p-1-a}  {p-a\choose k}
  \frac{(1-2^{n+1})B_k B_{n+1}}{(p-a)(n+1)}. \notag
\end{align}
Now if $c$ is even then $H(-c)\equiv 0 \pmod{p}$. So we may
assume $c$ is odd in the last line above. Then $k+n+1$ is always
odd so that $B_k B_{n+1}\ne 0$ if and only if $k=1$ and $n=p-a-b-1$.
Hence
\begin{multline*}
H(a,-b,-c)\equiv \sum_{k=0}^{p-w-1}  {p-a\choose k}
     {p-a-b-k\choose p-w-k} \frac{(1-2^{p-w-k+1})B_k B_{p-w-k+1}}{(p-a)(p-w-k+1)} \\
  +\sum_{k=p-a-b}^{p-1-a}  {p-a\choose k}
     {2p-1-a-b-k\choose 2p-1-w-k} \frac{(1-2^{2p-w-k})B_k B_{2p-w-k}}{(p-a)(2p-w-k)} \\
  - \frac{(1-2^{p-c})(1-2^{p-a-b})B_{p-a-b}B_{p-c}}{(a+b)c} .
\end{multline*}
After substitutions $k\to p-w+1-k$ in the first sum and
$k\to 2p-w-k$ in the second sum the theorem follows
immediately from Cororllary~\ref{cor:Hdepth1},
\end{proof}

\begin{rem} The condition $p\ge w+3$ in Theorem \ref{thm:wtevendepth3}
can not be weakened since
\begin{equation*}
H(1,-2,-3)\equiv  \text{RHS}+5\not\equiv \text{RHS} \pmod{7}.
\end{equation*}
\end{rem}

\section{AMHS of arbitrary depth}\label{sec:depthAll}
In this section we provide some general results on AMHS without
restrictions on the depth. We first consider the homogeneous
case for which the key idea comes from
\cite[Lemma 2.12]{1stpart} and \cite[Theorem~2.3]{Hmodp}.

Let $p_i=\sum_{j\ge 1} x_j^i$ be the power-sum symmetric functions
and $e_i=\sum_{j_1<\cdots <j_i}x_{j_i} \cdots x_{j_i}$ be  the
elementary symmetric functions of degree $i$. Let $P(\ell)$ be the
set of unordered partitions of $\ell$. For
$\gl=(\gl_1,\dots,\gl_r)\in P(\ell)$ we set $p_\gl=\prod_{i=1}^r
p_{\gl_i}$. Recall that the expression of $e_l$ in terms of $p_i$
is given by the following formula (see \cite[p.28]{Mac}):
\begin{equation}\label{equ:pe}
\ell!e_\ell=\left|
\begin{matrix}
p_1 &1 &0  &\cdots & 0 \\
p_2 &p_1 &2  &\cdots & 0 \\
\ &\ & \ &\ddots & 0 \\
p_{\ell-1} &p_{\ell-2} &p_{\ell-3}  &\cdots & \ell-1 \\
p_\ell &p_{\ell-1} &p_{\ell-2}  &\cdots & p_1
\end{matrix}
\right|=\sum_{\gl\in P(\ell)} c_\gl p_\gl.
\end{equation}
Denote by $O(\ell)\subset P(\ell)$ the subset of odd partitions
$\gl=(\gl_1,\dots,\gl_r)$ (i.e., $\gl_i$ is odd for every part).

\begin{lem}\label{lem:cgl}
Let $a,\ell\in \NN$ and $p$ a prime such that $a\ell<p-1$.
Set $H(-)=H(-;p-1)$. For an odd partition
$\gl=(\gl_1,\dots,\gl_r)\in O(\ell)$
we put $H_\gl(-a)=\prod_{i=1}^r H(-\gl_i a)$. Then
\begin{equation}\label{cgl}
\ell! H\big(\{-a\}^\ell \big)\equiv
\sum_{\gl\in O(\ell)} c_\gl H_\gl(-a ) \pmod{p},
\end{equation}
where $c_\gl$ are given by \eqref{equ:pe}. In particular, if $a$ is even
then $\eqref{cgl}\equiv 0 \pmod{p}$. If $a>1$ is odd then
$\eqref{cgl}$ is congruent to a $\Q$-linear combination of
$B_{p-\gl_1 a}\cdots B_{p-\gl_r a}$ for odd partitions
$\gl=(\gl_1,\dots,\gl_r)$.
If $a=1$ then
$\eqref{cgl}$ is congruent to a $\Q$-linear combination of
$q_p^sB_{p-\gl_{s+1}}\cdots B_{p-\gl_r}$ for odd partitions
$\gl=(\{1\}^s,\gl_{s+1},\dots,\gl_r)$ ($\gl_i>1$ for all $i>s$),
where $q_p=(2^{p-1}-1)/p$ is the Fermat quotient.
\end{lem}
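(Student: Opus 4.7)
The plan is to apply the fundamental identity \eqref{equ:pe} from symmetric function theory after specializing the variables to $x_k := (-1)^k/k^a$ for $1\le k\le p-1$. Under this substitution one has immediately
\[ e_\ell(x_1,\dots,x_{p-1}) = H(\{-a\}^\ell;p-1), \]
while separating the parity of $i$ gives
\[ p_i(x_1,\dots,x_{p-1}) = \sum_{k=1}^{p-1}\frac{(-1)^{ik}}{k^{ai}} = \begin{cases} H(ai;p-1) & \text{if $i$ is even,}\\ H(-ai;p-1) & \text{if $i$ is odd.}\end{cases} \]
Inserting these evaluations into \eqref{equ:pe} expresses $\ell!\,H(\{-a\}^\ell;p-1)$ as $\sum_{\gl\in P(\ell)} c_\gl\,p_\gl(x)$ modulo $p$.

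Next I would eliminate the partitions having an even part. Whenever $m$ is an integer with $2\le m<p-1$, Fermat's little theorem yields $H(m;p-1)\equiv \sum_k k^{p-1-m}\equiv 0\pmod p$, because $p-1\nmid p-1-m$. The hypothesis $a\ell<p-1$ guarantees $a\gl_i\le a\ell<p-1$ for every part $\gl_i$ of every partition appearing, so if some even $\gl_i$ occurs then (since $a\gl_i$ is then also even and strictly less than $p-1$) the factor $H(a\gl_i;p-1)\equiv 0\pmod p$ kills $p_\gl$. Only odd partitions $\gl\in O(\ell)$ therefore survive, giving \eqref{cgl}.

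To obtain the three explicit descriptions I would invoke Corollary \ref{cor:Hdepth1} on each factor $H(-a\gl_i)$ appearing in $H_\gl(-a)$. When $a$ is even, every $a\gl_i$ is even, so each such factor is $\equiv 0\pmod p$ and the whole sum vanishes. When $a>1$ is odd each $a\gl_i\ge 3$ is odd, and Corollary \ref{cor:Hdepth1} expresses $H(-a\gl_i)$ as an explicit $\Q$-multiple of $B_{p-a\gl_i}$, yielding the asserted $\Q$-linear combination of products of Bernoulli numbers. When $a=1$ the same corollary handles parts $\gl_i>1$; for a unit part one computes $H(-1;p-1)\equiv -2q_p\pmod p$ by combining Corollary \ref{cor:Hdepth1} with the von Staudt--Clausen congruence $pB_{p-1}\equiv -1\pmod p$ and the identity $1-2^{p-1}=-pq_p$. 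Each unit part of $\gl$ therefore contributes exactly one factor of $q_p$, producing the stated shape $q_p^s B_{p-\gl_{s+1}}\cdots B_{p-\gl_r}$.

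The main obstacle is essentially bookkeeping: one must verify that the bound $a\ell<p-1$ genuinely controls every part of every partition in the sum (including mixed partitions with both small and large parts) so that the ``killing'' step is legitimate, and then, in the case $a=1$, track the Fermat-quotient contributions coming from unit parts alongside the Bernoulli-number contributions from the larger odd parts to confirm that the resulting expression has precisely the form claimed.
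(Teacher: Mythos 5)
Your argument is correct and follows essentially the same route as the paper: the paper's proof simply cites the specialization of the Newton/determinant identity \eqref{equ:pe} to $x_k=(-1)^k/k^a$ (via the quoted results of Zhao and Hoffman) for \eqref{cgl}, and obtains the depth-one evaluations $H(-a\gl_i)$, including $H(-1)\equiv -2q_p$, from the cited congruences rather than from Corollary~\ref{cor:Hdepth1} as you do --- but the paper's own remark after that corollary notes these two routes are equivalent. Your write-up just supplies the details the paper delegates to references.
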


\begin{proof} Congruence \eqref{cgl} follows from
\cite[Lemma 2.12]{1stpart} and \cite[Theorem~2.3]{Hmodp}.
The last part follows from \cite[Theorem~2.1]{Tau1} and \eqref{equ:SunThm5.2}.
\end{proof}

For example, by  \cite[Theorem~2.1]{Tau1} and \eqref{equ:SunThm5.2}
we have
\begin{equation}\label{equ:H-1}
H(-1)=-H(1)+H(1;\frac{p-1}{2})\equiv
-2q_p+ pq_p^2 - \frac 23 p^2q_p^3 - \frac{1}{4}p^2 B_{p-3}\pmod{p^3}.
\end{equation}
Observe that $O(2)=\{(1,1)\}, O(3)=\{(1,1,1),(3)\}, O(4)=\{(1,1,1,1),(1,3)\}.$
It is obvious that $c_{(1,\cdots,1)}=1$, $c_{(3)}=2$,
$c_{(1,3)}=8,$ $c_{(1,1,3)}=20,$  and $c_{(1,1,1,3)}=c_{(3,3)}=40,$
which implies that
\begin{align} \label{equ:depth2FermatQ}
2 H\big(\{-1\}^2\big)\equiv & 4q_p^2,
\ \ \   \qquad \qquad \text{so }
H\big(\{-1\}^2\big)\equiv 2q_p^2 \\
6 H\big(\{-1\}^3\big)\equiv & -8q_p^3+2H(-3), \ \ \   \quad \quad \text{so }
H\big(\{-1\}^3\big)\equiv -\frac43q_p^3-\frac16B_{p-3} \label{equ:depth3FermatQ}\\
24H\big(\{-1\}^4\big) \equiv &16 q_p^4+8H(-1)H(-3),
\quad \text{so }
H\big(\{-1\}^4\big)\equiv \frac23q_p^4+\frac13 q_p B_{p-3},\\
5! H\big(\{-1\}^5\big) \equiv & -32 q_p^5+20H(-1)^2 H(-3),
\quad \text{so }
H\big(\{-1\}^5\big)\equiv -\frac4{15}q_p^5-\frac13 q_p^2 B_{p-3}
\end{align}
and
\begin{align}
6! H\big(\{-1\}^6\big) \equiv & 64 q_p^6+40H(-1)^3 H(-3)+40H(-3)^2, \notag \\
H\big(\{-1\}^6\big) \equiv & \frac4{45} q_p^6+\frac29 q_p^3 B_{p-3}+\frac1{72}B_{p-3}^2.
\end{align}

\begin{rem} Congruences \eqref{equ:depth2FermatQ} and
\eqref{equ:depth3FermatQ} are not new. See the Remarks
on \cite[p.\ 365]{CD}.
\end{rem}

For non-homogeneous $\ors$ we don't know too much except for those of very special
forms. For example we have the follow easy statement.
\begin{prop} Suppose $\ors=\ola{\ors}$ is palindromic. If the number of negative
components in $\ors$ and the weight $|\ors|$ have different parity then
\begin{equation*}
     S(\ors;p-1)\equiv H(\ors;p-1)\equiv 0 \pmod{p}.
\end{equation*}
\end{prop}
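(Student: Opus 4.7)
The plan is to apply the reversal trick directly to palindromic $\ors$. Write $m$ for the number of negative components of $\ors$, $r=\ell(\ors)$ for its depth, and $w=|\ors|$ for its weight, and carry out the substitution $k_i\mapsto p-k_{r+1-i}$ in the defining sum for $H(\ors;p-1)$. This map is a bijection of the strictly increasing index tuples $1\le k_1<\cdots<k_r\le p-1$ with themselves. Working modulo $p$, each factor $k_i^{|s_i|}$ picks up the sign $(-1)^{|s_i|}$, yielding a total $(-1)^w$; each factor $\sign(s_i)^{k_i}$ becomes $\sign(s_i)^{p-k_{r+1-i}}=\sign(s_i)\cdot\sign(s_i)^{k_{r+1-i}}$, using that $p$ is odd and $\sign(s_i)=\pm 1$, and this contributes a total $\prod_i\sign(s_i)=(-1)^m$. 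Reindexing $i\mapsto r+1-i$ in the inner product and invoking the palindrome hypothesis $\ola{\ors}=\ors$, the remaining sum reassembles into $H(\ors;p-1)$, and we obtain
\[
H(\ors;p-1)\equiv(-1)^{m+w}H(\ors;p-1)\pmod p.
\]

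Under the hypothesis that $m$ and $w$ have different parity, the coefficient $(-1)^{m+w}$ equals $-1$, so $2H(\ors;p-1)\equiv 0\pmod p$. Since $p\ge|\ors|+2\ge 3$ is an odd prime, $2$ is invertible modulo $p$, and we conclude $H(\ors;p-1)\equiv 0\pmod p$. The argument for $S(\ors;p-1)$ is word-for-word the same: the substitution $k_i\mapsto p-k_{r+1-i}$ is also a bijection of the non-strict simplex $1\le k_1\le\cdots\le k_r\le p-1$ with itself, and the sign bookkeeping is identical. The only real verification is that the substitution is a bijection of the appropriate index set; everything else amounts to combining the three elementary sign contributions ($(-1)^w$ from the denominators, $(-1)^m$ from the character factors, and palindromic reindexing), and there is no hard step.
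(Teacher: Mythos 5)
Your proof is correct and is essentially the paper's: the authors dispose of this proposition in one line by invoking the reversal relations \eqref{equ:reversal}, and those relations are proved by exactly the substitution $k_i\mapsto p-k_{r+1-i}$ that you carry out, so you have merely inlined the derivation. One incidental benefit of writing it out: your computation produces the sign $\sign\big(\prod_j s_j\big)(-1)^{|\ors|}=(-1)^{m+|\ors|}$ depending on the \emph{weight}, which is what the proposition's parity hypothesis requires, whereas \eqref{equ:reversal} as printed carries $(-1)^r$ with $r$ the depth --- evidently a misprint for $(-1)^{|\ors|}$, as both your argument and the paper's own Lemma~\ref{lem:reversalModpsq} confirm.
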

\begin{proof} This is obvious by the reversal relations \eqref{equ:reversal}
\end{proof}
In order to state and prove Proposition \ref{prop:-11r}, we need to investigate the
following two types of sums. Define
\begin{align}\label{equ:Udefn}
  U(\ors;n):=&\sum_{1\leq k_1<k_2<\dots<k_\ell\leq n}\;
\prod_{i=1}^\ell \frac{(-\sign(s_i)/2+3/2)^{k_i}}{k_i^{|s_i|}},\\
  V(\ors;n):=&\sum_{1\leq k_1<k_2<\dots<k_\ell\leq n}\;
\prod_{i=1}^\ell \frac{(\sign(s_i)/4+3/4)^{k_i}}{k_i^{|s_i|}}.\label{equ:Vdefn}
\end{align}
For example, $U(-m;n)=\sum_{k=1}^n 2^k/k^m$,
$V(-m;n)=\sum_{k=1}^n 1/(2^kk^m),$ and for $\ors$ with only
positive components $U(\ors;n)=V(\ors;n)=H(\ors;n)$. The sums $U(-m;p-1)$ appeared
previously in congruences involving powers of Fermat quotient
(see \cite{Agoh,Bach,DS,Gran}).
To compute the congruence involving these sums we need the following
preliminary results which will also be needed in \S\ref{sec:highPower}.
\begin{lem} \label{lem:Ureverse}
Assume $\ors=(s_1,\dots,s_\ell)\in (\ZZ^*)^{\ell}$. Let
$\bfe_j$ be the standard $j$-th unit vector. Then we have
\begin{align} \label{equ:Urev}
  U(\ors;n)\equiv& 2^{p\sharp\{j:s_j<0\}}(-1)^{|\ors|} \Big(V(\ola{\ors};n)+
p\sum_{j=1}^\ell |s_j| V(\ola{\ors\oplus \bfe_j} ;n) \Big)\pmod{p^2}\\
  V(\ors;n)\equiv& 2^{-p\sharp\{j:s_j<0\}}(-1)^{|\ors|} \Big(U(\ola{\ors};n)+
p\sum_{j=1}^\ell |s_j| U(\ola{\ors\oplus \bfe_j} ;n) \Big)\pmod{p^2}.\label{equ:Vrev}
\end{align}
Further,
\begin{equation}\label{equ:Vreverse}
V(-1)\equiv -\frac1{2^p} \Big( U(-1)+pU(-2)+p^2 U(-3)\Big) \pmod{p^3}.
\end{equation}
\end{lem}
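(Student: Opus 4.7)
The plan is to prove all three congruences by a single substitution $k_i\mapsto p-k_{\ell+1-i}$ in the defining sums (with $n=p-1$ as the ambient convention tacitly implies), which is the alternating analogue of the device behind the reversal relations \eqref{equ:reversal}. This substitution preserves the strict inequalities and the range $1\le k_i\le p-1$ after relabeling, so all of the work comes from (i) expanding $(p-k)^{-m}$ as a geometric series in $p$ to the appropriate order, and (ii) applying the identity $\alpha^{p-k}=\alpha^p\cdot\alpha^{-k}$ on the numerator factors.

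Introduce $\alpha_i=-\sign(s_i)/2+3/2\in\{1,2\}$ and $\beta_i=\sign(s_i)/4+3/4\in\{1,1/2\}$, so that $\alpha_i\beta_i=1$; then $U(\ors;n)$ is the sum of $\prod\alpha_i^{k_i}/k_i^{|s_i|}$ and $V(\ors;n)$ that of $\prod\beta_i^{k_i}/k_i^{|s_i|}$. After the substitution, $\alpha_i^{p-k_{\ell+1-i}}=\alpha_i^p\cdot\beta_i^{k_{\ell+1-i}}$, so factoring out $\prod_i\alpha_i^p=2^{p\sharp\{j:s_j<0\}}$ leaves precisely the $\beta$-weights of $V(\ola{\ors};n)$. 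Expanding in parallel
\[
\frac{1}{(p-k)^{|s_i|}}\equiv\frac{(-1)^{|s_i|}}{k^{|s_i|}}\left(1+\frac{|s_i|\,p}{k}\right)\pmod{p^2}
\]
produces the global sign $\prod_i(-1)^{|s_i|}=(-1)^{|\ors|}$ together with a linear-in-$p$ correction whose $i$-th contribution raises the exponent at position $\ell+1-i$ by one. The reindexing $j=\ell+1-i$ converts this correction into $p\sum_j|s_j|\,V(\ola{\ors\oplus\bfe_j};n)$, yielding \eqref{equ:Urev}. Formula \eqref{equ:Vrev} is obtained from the same calculation with the roles of $\alpha$ and $\beta$ swapped; the only change is that the prefactor becomes $\prod_i\beta_i^p=2^{-p\sharp\{j:s_j<0\}}$.

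For \eqref{equ:Vreverse} I would take $\ors=(-1)$ and run the same substitution, but expand one step further:
\[
\frac{1}{p-k}\equiv-\frac{1}{k}-\frac{p}{k^2}-\frac{p^2}{k^3}\pmod{p^3},
\]
while $1/2^{p-k}=2^{-p}\cdot 2^k$ is exact. Summing over $k=1,\dots,p-1$ immediately gives $V(-1)\equiv-2^{-p}\bigl(U(-1)+p\,U(-2)+p^2U(-3)\bigr)\pmod{p^3}$.

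The only real obstacle is the combinatorial bookkeeping matching $\ola{\ors\oplus\bfe_j}$ (first increment the $j$-th entry, then reverse) to the reverse-then-increment object that naturally emerges from the substitution; once the correspondence $j=\ell+1-i$ is tracked carefully, everything else is routine binomial expansion with no analytic input.
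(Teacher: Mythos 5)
Your proof is correct, and it is exactly the intended argument: the paper omits the proof of this lemma but uses the identical device (the substitution $k\mapsto p-k$ followed by expanding $(p-k)^{-m}$ in powers of $p$ and writing $\alpha^{p-k}=\alpha^p\alpha^{-k}$) in its proof of Lemma~\ref{lem:reversalModpsq}. Your observation that the statement tacitly requires $n=p-1$ is also right.
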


\begin{lem}  \label{lem:mygeneral}
Let $p$ be an odd prime. For all positive integers $d$ and $m$  we have
\begin{align}
\sum_{1\le n_1\le \cdots\le n_d\le m} \frac{(1-x)^{n_1}-1}{n_1\cdots n_d}
=& \sum_{j=1}^{m}  \frac{(-x)^j}{j^d} {m\choose j} \label{equ:mygeneral(1)}\\
\sum_{1\le n_1\le \cdots\le n_d\le p-1} \frac{(1-x)^{n_1}-1}{n_1\cdots n_d}
\equiv &\sum_{j=1}^{p-1}  \frac{x^j}{j^d} \Big(1-pH(1;j)+p^2 H(1,1;j)\Big) \pmod{p^3}. \label{equ:mygeneral(2)}
\end{align}
\end{lem}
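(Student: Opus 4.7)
My plan is to prove the two parts in sequence: part \eqref{equ:mygeneral(1)} is a clean combinatorial identity (no primes involved), and part \eqref{equ:mygeneral(2)} then follows by specializing $m=p-1$ and expanding the binomial coefficient modulo $p^3$.

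For \eqref{equ:mygeneral(1)} I will proceed by induction on $d$. Denote the left-hand side by $L_d(m)$ and the right-hand side by $R_d(m)$. For the base case $d=1$, expand $(1-x)^n-1=\sum_{j=1}^n\binom{n}{j}(-x)^j$, swap the order of summation in $L_1(m)$, and use $\binom{n}{j}/n=\binom{n-1}{j-1}/j$ together with the hockey-stick identity $\sum_{n=j}^m\binom{n-1}{j-1}=\binom{m}{j}$; this produces $R_1(m)$ directly. For the inductive step, observe that by isolating the outermost index,
\[
L_d(m)=\sum_{n=1}^m\frac{L_{d-1}(n)}{n}.
\]
Substituting the inductive hypothesis $L_{d-1}(n)=R_{d-1}(n)$, interchanging the summations, and applying the same two identities ($\binom{n}{j}/n=\binom{n-1}{j-1}/j$ and hockey stick) again yields $L_d(m)=R_d(m)$.

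For \eqref{equ:mygeneral(2)} I will take $m=p-1$ in \eqref{equ:mygeneral(1)} and expand
\[
\binom{p-1}{j}=\prod_{k=1}^j\frac{p-k}{k}=(-1)^j\prod_{k=1}^j\Big(1-\frac{p}{k}\Big).
\]
Since $1\le j\le p-1$, all denominators $k$ are $p$-units, so the product may be expanded by elementary symmetric functions in $\{1/k\}_{k=1}^j$; truncating at the $p^3$-term gives
\[
\binom{p-1}{j}\equiv(-1)^j\Big(1-pH(1;j)+p^2H(1,1;j)\Big)\pmod{p^3}.
\]
Multiplying by $(-x)^j/j^d$, the sign $(-1)^j$ cancels with $(-x)^j$, producing $x^j/j^d$ times the desired factor; summing over $j$ yields \eqref{equ:mygeneral(2)}.

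There is no serious obstacle here: part \eqref{equ:mygeneral(1)} reduces to the hockey-stick identity, and part \eqref{equ:mygeneral(2)} amounts to a standard $p$-adic expansion of $\binom{p-1}{j}$. The only minor point to verify is that the $j$-th step in the inductive swap of sums for \eqref{equ:mygeneral(1)} correctly absorbs the extra factor $1/j$ into $1/j^d\to 1/j^{d+1}$; this is exactly what the identity $\binom{n}{j}/n=\binom{n-1}{j-1}/j$ accomplishes.
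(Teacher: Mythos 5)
Your proposal is correct, and part \eqref{equ:mygeneral(2)} is handled exactly as in the paper (the expansion $(-1)^j\binom{p-1}{j}\equiv 1-pH(1;j)+p^2H(1,1;j)\pmod{p^3}$ applied to \eqref{equ:mygeneral(1)} with $m=p-1$; note that this is precisely the paper's displayed congruence \eqref{equ:p-1choosej}). For the identity \eqref{equ:mygeneral(1)}, however, you take a genuinely different route. The paper, following the cited Monthly solution, applies the operator $\gD\colon p(x)\mapsto xp'(x)$ a total of $d$ times to both sides: each application collapses the innermost sum on the left (since $\sum_{n_1=1}^{n_2}-x(1-x)^{n_1-1}=(1-x)^{n_2}-1$) and strips one factor of $j$ from the denominator on the right, so both sides reduce to $(1-x)^m-1$, and one concludes by injectivity of $\gD$ together with the observation that both sides vanish at $x=0$. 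Your induction on $d$ instead peels off the \emph{outermost} index via $L_d(m)=\sum_{n=1}^m L_{d-1}(n)/n$ and closes the recursion with the absorption identity $\binom{n}{j}/n=\binom{n-1}{j-1}/j$ and the hockey-stick identity $\sum_{n=j}^m\binom{n-1}{j-1}=\binom{m}{j}$; the base case $d=1$ is the same computation. Your argument is purely combinatorial, avoids the differential operator and the (slightly delicate) injectivity step entirely, and is fully self-contained; the paper's version is shorter on the page but leans on the external reference for its mechanism. Both are valid proofs of the same polynomial identity.
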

\begin{proof} Since for all positive integer $j<p$
\begin{equation}\label{equ:p-1choosej}
 (-1)^j {p-1\choose j}\equiv 1-pH(1;j)+p^2 H(1,1;j)\pmod{p^3},
\end{equation}
congruence \eqref{equ:mygeneral(2)} follows easily from \eqref{equ:mygeneral(1)}
which we now prove by mimicking the argument in \cite{monthly}.

Let $\QQ^*=\QQ\setminus\{0\}$.  Define the injective operator
\begin{align*}
\gD:\QQ[x]\setminus \QQ^*  & \lra \QQ[x]\setminus \QQ^* \\
 p(x)\ & \lmaps \ xp'(x).
\end{align*}
It suffices to show that
\begin{equation*}
\gD^d\left(\sum_{1\le n_1\le \cdots\le n_d\le m}
    \frac{(1-x)^{n_1}-1}{n_1\cdots n_d}\right)
=\gD^d\left(\sum_{j=1}^{m}  \frac{(-x)^j}{j^d} {m\choose j} \right).
\end{equation*}
Clearly
\begin{equation*}
\gD^d\left(\sum_{j=1}^{m}  \frac{(-x)^j}{j^d} {m\choose j} \right)
= \sum_{j=1}^{m}  (-x)^j  {m\choose j}=(1-x)^m-1.
\end{equation*}
On the other hand,
\begin{align*}
\ &\gD^d\left(\sum_{1\le n_1\le n_2\le \cdots\le n_d\le m}
    \frac{(1-x)^{n_1}-1}{n_1n_2\cdots n_d}\right)\\
\ =&\gD^{d-1}\left(\sum_{1\le n_2\le \cdots\le n_d\le m}
    \frac{(1-x)^{n_2}-1}{n_2\cdots n_d}\right)\\
\  & \vdots \\
\ =&\gD\left(\sum_{1\le n_d\le m} \frac{(1-x)^{n_d}-1}{ n_d}\right)\\
\ =&-x \sum_{1\le n_d\le m} (1-x)^{n_d-1} \\
\ =& (1-x)^m-1.
\end{align*}
By injectivity of $\gD$ this completes the proof of the lemma.
\end{proof}

\begin{lem}\label{lem:binomialU}
Let $d$ and $m$ be two positive integers. Then
 \begin{equation}\label{equ:binomialU}
\sum_{1\le n_1\le \cdots\le n_d\le m} \frac{(-1)^{n_d} (1-x)^{n_1}}{n_1\cdots n_d} {m\choose n_d}
=\sum_{k=1}^m \frac{x^k}{k^d}-\sum_{k=1}^m \frac{1}{k^d}.
\end{equation}
\end{lem}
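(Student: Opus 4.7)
The plan is to reduce the inner $(d-1)$-fold sum over $n_1,\dots,n_{d-1}$ by invoking Lemma~\ref{lem:mygeneral} twice---once in its general form and once at $x=1$---and then to evaluate the remaining outer sum over $n_d$ via a Beta-integral identity.

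First, I fix $n_d$ and apply \eqref{equ:mygeneral(1)} with $(d,m)$ replaced by $(d-1,n_d)$ to obtain
\[\sum_{1\le n_1\le \cdots\le n_{d-1}\le n_d} \frac{(1-x)^{n_1}-1}{n_1\cdots n_{d-1}}=\sum_{j=1}^{n_d}\frac{(-x)^j}{j^{d-1}}{n_d\choose j}.\]
Setting $x=1$ here, and using $(1-x)^{n_1}=0$ for $n_1\ge 1$, gives the companion formula $\sum 1/(n_1\cdots n_{d-1})=-\sum_{j=1}^{n_d}(-1)^j{n_d\choose j}/j^{d-1}$. Adding the two identities yields
\[\sum_{1\le n_1\le \cdots\le n_{d-1}\le n_d} \frac{(1-x)^{n_1}}{n_1\cdots n_{d-1}}=\sum_{j=1}^{n_d}\frac{(-x)^j-(-1)^j}{j^{d-1}}{n_d\choose j}.\]

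Second, I substitute this expression into the LHS of \eqref{equ:binomialU}, interchange the summations over $j$ and $n_d$, and use ${m\choose n_d}{n_d\choose j}={m\choose j}{m-j\choose n_d-j}$. After the shift $t=n_d-j$ the inner sum over $n_d$ becomes
\[(-1)^j\sum_{t=0}^{m-j}\frac{(-1)^t}{t+j}{m-j\choose t},\]
which evaluates to $(-1)^j/\bigl(j{m\choose j}\bigr)$ by the standard Beta integral $\int_0^1 u^{j-1}(1-u)^{m-j}\,du=1/\bigl(j{m\choose j}\bigr)$. Collecting the resulting factors of ${m\choose j}$ and $(-1)^j$ collapses the whole expression to $\sum_{j=1}^m\bigl((-1)^j(-x)^j-(-1)^{2j}\bigr)/j^d=\sum_{j=1}^m(x^j-1)/j^d$, which is precisely the RHS of \eqref{equ:binomialU}.

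The only mildly delicate point is recognizing that the binomial swap ${m\choose n_d}{n_d\choose j}={m\choose j}{m-j\choose n_d-j}$ is tailored exactly so that the Beta-integral evaluation cancels the factor $1/{m\choose j}$ and leaves the clean weights $1/j^d$; every other step is routine bookkeeping.
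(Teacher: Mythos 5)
Your argument is correct for $d\ge 2$, and it is a genuinely different route from the one the paper takes: the paper gives no details at all for Lemma~\ref{lem:binomialU}, deferring to \cite{monthly}, whose method is the telescoping differential-operator argument ($\gD\colon p(x)\mapsto xp'(x)$) written out in the proof of Lemma~\ref{lem:mygeneral} --- one applies $\gD$ repeatedly to peel off one summation variable at a time and then matches constant terms. You instead use Lemma~\ref{lem:mygeneral} as a black box on the inner $d-1$ variables (combining the general-$x$ and $x=1$ instances of \eqref{equ:mygeneral(1)} does give $\sum (1-x)^{n_1}/(n_1\cdots n_{d-1})=\sum_{j=1}^{n_d}\bigl((-x)^j-(-1)^j\bigr){n_d\choose j}/j^{d-1}$), and then kill the outer variable with ${m\choose n_d}{n_d\choose j}={m\choose j}{m-j\choose n_d-j}$ and the evaluation $\sum_{t=0}^{m-j}(-1)^t{m-j\choose t}/(t+j)=1/\bigl(j{m\choose j}\bigr)$; the factors ${m\choose j}$ and $(-1)^j$ cancel exactly as you claim, leaving $\sum_{j=1}^m(x^j-1)/j^d$. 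I verified the bookkeeping (e.g.\ at $d=2$, $m=2$) and it checks out. What your route buys is an actual self-contained proof, recycling the already-established Lemma~\ref{lem:mygeneral} rather than re-running the operator induction; what it costs is the boundary case $d=1$, where your appeal to \eqref{equ:mygeneral(1)} with first parameter $d-1=0$ falls outside that lemma's hypotheses. That case is a one-line patch: expand $(x-1)^n$ binomially in $\sum_{n=1}^m (x-1)^n{m\choose n}/n$ and apply the same Beta-integral evaluation directly (or differentiate once in $x$ and integrate back); you should state this explicitly so the induction-free reduction starts from a legitimate base.
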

\begin{proof} The proof, which is completely similar to \cite{monthly}, is left to the
interested reader. Note that there is a misprint in \cite{monthly} where in the definition
of $f(x)$ the range of $i$ should be from $1$ to $j$. Namely
\begin{equation*}
 f(x)=\sum_{k=1}^n \frac{(-1)^{k-1}}{k}{n\choose k} \sum_{j=1}^{k} \frac1j \sum_{i=1}^j \frac{(1-x)^i}{i}.
\end{equation*}
\end{proof}

\begin{prop} \label{prop:-11r}
Let $n\in\NN$. For any prime $p>n+2$ write $H(-)=H(-;p-1)$ and similarly
for $S,U$ and $V$.  Then
\begin{multline} \label{equ:-11n}
H(-1,\{1\}^n)\equiv S(-1,\{1\}^n)\equiv (-1)^n H(\{1\}^n,-1)\equiv (-1)^n S(\{1\}^n,-1)\\
\equiv U(-n-1)\equiv   (-1)^{n+1} 2  V(-n-1) \pmod{p}.
\end{multline}
\end{prop}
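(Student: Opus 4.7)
\emph{Proof proposal.}
The plan is to establish the chain of congruences in \eqref{equ:-11n} through four ingredients: Lemma~\ref{lem:binomialU} (which connects $S$ to $U$), a generating function manipulation (which connects $H$ to $S$), the reversal relations \eqref{equ:reversal}, and a substitution $k\mapsto p-k$ (which connects $U$ to $V$). The key underlying mod-$p$ identity needed is $\prod_{j=1}^{p-1}(1+t/j)\equiv 1-t^{p-1}\pmod p$, which follows from Wilson's theorem together with the factorization $t^p-t\equiv t\prod_{k=1}^{p-1}(t-k)\pmod p$.

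First I will prove $S(-1,\{1\}^n;p-1)\equiv U(-n-1;p-1)\pmod p$. Setting $d=n+1$, $m=p-1$, $x=2$ in Lemma~\ref{lem:binomialU} and using $(-1)^{n_d}\binom{p-1}{n_d}\equiv 1\pmod p$ collapses the left-hand side to $S(-1,\{1\}^n;p-1)$, while the right-hand side equals $U(-n-1;p-1)-H(n+1;p-1)$; the second term vanishes mod $p$ since $1\le n+1\le p-2$.

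Next I will show $H(-1,\{1\}^n;p-1)\equiv S(-1,\{1\}^n;p-1)\pmod p$ by studying
\begin{equation*}
F(t):=\sum_{n\ge 0}H(-1,\{1\}^n;p-1)\,t^n=\sum_{k=1}^{p-1}\frac{(-1)^k}{k}\prod_{j=k+1}^{p-1}\Big(1+\frac{t}{j}\Big).
\end{equation*}
Writing $\prod_{j=k+1}^{p-1}(1+t/j)=\prod_{j=1}^{p-1}(1+t/j)\big/\prod_{j=1}^{k}(1+t/j)\equiv(1-t^{p-1})\big/\prod_{j=1}^{k}(1+t/j)\pmod p$, expanding $1/\prod_{j=1}^{k}(1+t/j)=\sum_{m\ge 0}(-t)^m S(\{1\}^m;k)$ as a formal power series, and swapping the (finite) summations, I arrive at
\begin{equation*}
F(t)\equiv (1-t^{p-1})\sum_{m\ge 0}(-t)^m S(\{1\}^m,-1;p-1)\pmod p.
\end{equation*}
Extracting the coefficient of $t^n$ for $n<p-1$ yields $H(-1,\{1\}^n;p-1)\equiv(-1)^n S(\{1\}^n,-1;p-1)\pmod p$; combining with the reversal relation $(-1)^n S(\{1\}^n,-1)\equiv S(-1,\{1\}^n)\pmod p$ coming from \eqref{equ:reversal} then delivers the crucial $H(-1,\{1\}^n)\equiv S(-1,\{1\}^n)\pmod p$.

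The remaining $H$--$S$ identities $H(-1,\{1\}^n)\equiv(-1)^n H(\{1\}^n,-1)$ and $S(-1,\{1\}^n)\equiv(-1)^n S(\{1\}^n,-1)$ follow directly from \eqref{equ:reversal}. For the $V$-equivalence I substitute $k\mapsto p-k$ in $V(-n-1;p-1)=\sum_{k=1}^{p-1}1/(2^k k^{n+1})$: using $2^{-k}\equiv 2^{p-1-k}\pmod p$ (Fermat) and $(p-k)^{-n-1}\equiv(-1)^{n+1}/k^{n+1}\pmod p$, the sum transforms into $\tfrac{(-1)^{n+1}}{2}\,U(-n-1;p-1)$, which is also a special case of \eqref{equ:Urev}. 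I expect the main obstacle to be the $H\equiv S$ step: a direct stuffle attack via the inclusion-exclusion \eqref{equ:StoH} is blocked because the individual correction terms $H(-m_0,m_1,\dots,m_\ell)$ do not vanish mod $p$ (e.g., $H(-3;p-1)$ is generally nonzero), so the collective cancellation has to be captured indirectly by the generating-function detour through the reversed composition $(\{1\}^n,-1)$.
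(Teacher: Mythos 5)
Your proof is correct, and it follows the paper's skeleton in three of its four links: the identification $S(-1,\{1\}^n)\equiv U(-n-1)$ via Lemma~\ref{lem:binomialU} with $m=p-1$, $d=n+1$, $x=2$; the reversal relations \eqref{equ:reversal}; and the $U$--$V$ passage (which, as you note, is just the depth-one case of \eqref{equ:Urev}). Where you genuinely diverge is in the remaining link: the paper closes the chain by quoting $(-1)^nH(\{1\}^n,-1)\equiv U(-n-1)$ from \cite[Theorem~2.3]{Tau1} as a black box, whereas you prove the equivalent congruence $H(-1,\{1\}^n)\equiv(-1)^nS(\{1\}^n,-1)$ from scratch by expanding $\sum_{n\ge0} H(-1,\{1\}^n)t^n=\sum_{k=1}^{p-1}\frac{(-1)^k}{k}\prod_{j=k+1}^{p-1}(1+t/j)$ and using $\prod_{j=1}^{p-1}(1+t/j)\equiv 1-t^{p-1}\pmod p$ (Wilson plus the factorization of $t^{p-1}-1$) together with $\prod_{j=1}^{k}(1+t/j)^{-1}=\sum_{m\ge0}(-t)^mS(\{1\}^m;k)$. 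This argument is sound — every power series involved has $p$-integral coefficients, and extracting the coefficient of $t^n$ for $n\le p-2$ (which is where the hypothesis $p>n+2$ enters, along with $H(n+1)\equiv0$) is legitimate. What it buys you is a self-contained proof: combined with the reversal relation and Lemma~\ref{lem:binomialU}, your generating-function step actually re-derives the cited result of \cite{Tau1}. The paper's version is shorter on the page but imports its hardest link from elsewhere; yours costs one extra computation and removes the external dependency. Your closing remark that a naive attack through the inclusion--exclusion \eqref{equ:StoH} is blocked by non-vanishing correction terms is apt — that is exactly why the paper reaches for \cite{Tau1} at this point.
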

\begin{proof}
The congruence $(-1)^n H(\{1\}^n,-1)\equiv U(-n-1)\pmod{p}$ is the content of
\cite[Theorem~2.3]{Tau1}. By taking $m=p-1$, $d=n+1$ and $x=2$
in Lemma~\ref{lem:mygeneral} or Lemma~\ref{lem:binomialU} we get
$S(-1,\{1\}^n)\equiv U(-n-1)\pmod{p}$. The congruence for $V$ follows from
\eqref{equ:Urev}. The other two congruences follows from the
reversal relations~\eqref{equ:reversal}.
\end{proof}

We now generalize this to the following
\begin{prop} \label{prop:am-aan}
Let $m,n$ be two nonnegative integers and $a$ a positive integer.
For any prime $p>a(m+n)+2$ write $H(-)=H(-;p-1)$.  Then
\begin{alignat}{2}
 \label{equ:am-aan1}
H(\{a\}^m,-a,\{a\}^n)\equiv& (-1)^{m+n} S(\{a\}^n,-a,\{a\}^m) &\pmod{p},\\
 \label{equ:am-aan2}
H(\{a\}^m,-a,\{a\}^n)\equiv& (-1)^{(m+n+1)(a+1)} S(\{a\}^m,-a,\{a\}^n) &\pmod{p}.
\end{alignat}
\end{prop}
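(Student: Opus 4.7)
The plan is to derive both \eqref{equ:am-aan1} and \eqref{equ:am-aan2} from a single congruence of the form $H(\ors_1;p-1)\equiv (-1)^{(m+n+1)(a+1)}S(\ors_1;p-1)\pmod p$, using a sharpened form of the reversal relation to pass between them. First, applying the substitution $k_i\mapsto p-k_{r+1-i}$ in the defining sum of $H(\ors;p-1)$, together with $(p-k)^{|s|}\equiv(-1)^{|s|}k^{|s|}\pmod p$ and $\sign(s_i)^{p-k}=\sign(s_i)\cdot\sign(s_i)^{k}$ for odd $p$, I would derive the sharpened reversal
\[
H(\ors;p-1)\equiv\sign\bigl(\textstyle\prod_j s_j\bigr)(-1)^{|\ors|}H(\ola{\ors};p-1)\pmod p
\]
(and similarly for $S$); applied to $\ors_1$ this gives $H(\ors_1)\equiv (-1)^{(m+n+1)a+1}H(\ors_2)\pmod p$ and the analogous relation for $S$. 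A short exponent calculation $(m+n)+(m+n+1)a+1\equiv (m+n+1)(a+1)\pmod 2$ shows that, modulo these congruences, \eqref{equ:am-aan1} and \eqref{equ:am-aan2} are equivalent, so it suffices to prove \eqref{equ:am-aan2}.

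I would prove \eqref{equ:am-aan2} by induction on $m+n$. The base case $m=n=0$ reduces to $H(-a;p-1)\equiv(-1)^{a+1}H(-a;p-1)\pmod p$, which is trivial when $a$ is odd and which follows from Corollary~\ref{cor:Hdepth1} (giving $H(-a;p-1)\equiv 0\pmod p$) when $a$ is even. The case $m=0$ with general $n$ is the natural extension of Proposition~\ref{prop:-11r} from $a=1$ to arbitrary $a$; here I would classify the summation indices of $S(-a,\{a\}^n;p-1)$ according to how many of the first several variables coincide, obtaining
\[
S(-a,\{a\}^n;p-1)-H(-a,\{a\}^n;p-1)=\sum_{k=1}^{n}H\bigl(-(k{+}1)a,\{a\}^{n-k};p-1\bigr),
\]
and then reducing the right-hand side by a secondary induction on $n$ combined with Theorem~\ref{thm:depth2} and Corollary~\ref{cor:Hdepth1}. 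The case $n=0$ is handled via the sharpened reversal applied to $m=0$.

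For the general inductive step, I would use the stuffle identity from \eqref{equ:defnstuffle}
\[
H(a;p-1)\cdot H(\{a\}^{m-1},-a,\{a\}^n;p-1)=H(\{a\}^m,-a,\{a\}^n;p-1)+R(m,n),
\]
where $R(m,n)$ is a sum of AMHS either of strictly smaller depth than $m+n+1$ or of the form $H(\{a\}^{j_1},-Na,\{a\}^{j_2};p-1)$ with $N\ge 2$ and $j_1+j_2<m+n$. Since $H(a;p-1)\equiv 0\pmod p$ by \eqref{equ:SunThm5.1}, the left-hand side vanishes modulo $p$, yielding a recursion for $H(\ors_1;p-1)$ which, together with the analogous stuffle recursion for $S(\ors_1;p-1)$ and the induction hypothesis, closes the argument.

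The main obstacle is that $R(m,n)$ involves AMHS with central weight $-Na$ for $N\ge 2$, falling outside the scope of the original proposition. The induction must therefore be framed in a strengthened form allowing arbitrary central weight $-Na$ with $N\ge 1$, and the parity analysis balancing the sign $(-1)^{(m+n+1)(a+1)}$ against the vanishing $H(-Na;p-1)\equiv 0\pmod p$ (for $Na$ even, Corollary~\ref{cor:Hdepth1}) and the depth-two formulas of Theorem~\ref{thm:depth2} is where the bulk of the technical bookkeeping lies.
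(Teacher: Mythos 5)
Your reduction of \eqref{equ:am-aan2} to \eqref{equ:am-aan1} via the weight-form reversal relation $H(\ors)\equiv \sign(\prod_j s_j)(-1)^{|\ors|}H(\ola{\ors})$ is correct (and the parity bookkeeping checks out); this is exactly how the paper passes between the two congruences. The gap is in the core of your argument, the induction establishing one of the two congruences. First, the identity you propose for the $m=0$ case,
$S(-a,\{a\}^n)-H(-a,\{a\}^n)=\sum_{k=1}^{n}H(-(k+1)a,\{a\}^{n-k})$, is false for $n\ge 2$: by \eqref{equ:StoH} the non-strict sum decomposes over \emph{all} ways of merging consecutive parts, so already $S(-a,a,a)=H(-a,a,a)+H(-2a,a)+H(-a,2a)+H(-3a)$ and your right-hand side omits $H(-a,2a)$. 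Second, the stuffle step does not produce the remainder you describe. Expanding $H(a)\cdot H(\{a\}^{m-1},-a,\{a\}^{n})$ by \eqref{equ:defnstuffle} gives $m\,H(\{a\}^{m},-a,\{a\}^{n})+(n+1)H(\{a\}^{m-1},-a,\{a\}^{n+1})$ plus stuffed terms; the second summand has the \emph{same} depth $m+n+1$ and central entry $-a$, so the recursion moves along the line $m+n=\mathrm{const}$ rather than reducing depth, and the stuffed terms include words such as $(\{a\}^{j},2a,\{a\}^{m-2-j},-a,\{a\}^{n})$ carrying two distinguished entries, which lie outside your strengthened hypothesis $H(\{a\}^{j_1},-Na,\{a\}^{j_2})$. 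Third, the "analogous stuffle recursion for $S$" is not analogous: the non-strict sums obey a different quasi-shuffle product, so there is no reason the two remainders cancel term by term.

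For comparison, the paper proves \eqref{equ:am-aan1} by an entirely different mechanism: it introduces the polynomials $H^{(a)}_{d,k}(x)=\sum_{0<i_1<\cdots<i_d<p}x^{i_k}/(i_1\cdots i_d)^a$ and $S^{(a)}_{d,k}(x)$ in $\bbF_p[x]$ and proves the identity $H^{(a)}_{d,k}(x)+(-1)^{d}S^{(a)}_{d,d+1-k}(x)=0$ by induction on $d$, using the operator $(x\,d/dx)^a$ to peel off one index at a time; specializing to $x=-1$, $k=m+1$, $d=m+n+1$ gives \eqref{equ:am-aan1} at once. If you want to salvage your approach you would need to prove the congruence simultaneously for the whole family of words $(\{a\}^{j_0},\epsilon_1 N_1 a,\{a\}^{j_1},\dots,\epsilon_r N_r a,\{a\}^{j_r})$ closed under stuffing, which is a substantially heavier induction than the one you outline.
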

\begin{proof} The first congruence \eqref{equ:am-aan1} follows from \eqref{equ:HS1}
in the next Lemma by taking $x=-1$, $k=m+1$ and $d=m+n+1$. The second congruence
\eqref{equ:am-aan2} follows \eqref{equ:am-aan1} by the reversal relation.
\end{proof}

Let $a,d\in \NN$ and a prime $p\ge da+3$. We identify
the finite field $\bbF_p$ of $p$ elements with $\bbZ/p\bbZ$.
For $1\leq k \leq d$ define
\begin{align*}
H^{(a)}_{d,k}(x)=&\sum_{0<i_1<\cdots<i_d<p} \frac{x^{i_k}}{(i_1\cdots i_d)^a}\in \bbF_p[x],\\
S^{(a)}_{d,k}(x)=&\sum_{1\leq i_1\leq\cdots\leq i_d<p} \frac{x^{i_k}}{(i_1\cdots i_d)^a}\in \bbF_p[x]\\
h^{(a)}_{d,k}(x)=&\sum_{0<i_1<\cdots<i_d<p} \frac{(-1)^{\sum i_j} x^{i_k}}{(i_1\cdots i_d)^a}\in \bbF_p[x],\\
s^{(a)}_{d,k}(x)=&\sum_{1\leq i_1\leq\cdots\leq i_d<p} \frac{(-1)^{\sum i_j} x^{i_k}}{(i_1\cdots i_d)^a}\in \bbF_p[x],
\end{align*}
where $\sum i_j=i_1+\cdots+ i_d$.
For convenience we set $H^{(a)}_{d,0}(x)=H(\{a\}^d)=0$, $S^{(a)}_{d,d+1}(x)=x^{p-1} S(\{a\}^d)=0$,
$S^{(a)}_{d,0}(x)=xS(\{a\}^d)=0$ and $H^{(a)}_{d,d+1}(x)=x^p H(\{a\}^d)=0$ by \cite[Theorem~2.13]{1stpart}.
Moreover for even $a$ we set $h^{(a)}_{d,0}(x)=H(\{-a\}^d)=0$, $s^{(a)}_{d,d+1}(x)=(-x)^{p-1} S(\{-a\}^d)=0$,
$s^{(a)}_{d,0}(x)=-x S(\{-a\}^d)=0$ and $h^{(a)}_{d,d+1}(x)=(-x)^p H(\{-a\}^d)=0$ by Lemma~\ref{lem:cgl}
and \eqref{equ:Hdepth1}.

\begin{lem}
For $1\leq k \leq d$ we have the identity in $\bbF_p[x]$
\begin{align}\label{equ:HS1}
 H^{(a)}_{d,k}(x)+(-1)^{d} S^{(a)}_{d,d+1-k}(x)= 0,\\
 h^{(a)}_{d,k}(x)+(-1)^{d} s^{(a)}_{d,d+1-k}(x)= 0.\label{equ:HS2}
\end{align}
\end{lem}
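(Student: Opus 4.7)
My plan is to verify \eqref{equ:HS1} coefficient by coefficient in $\bbF_p[x]$: both sides are polynomials of degree $\le p-1$ vanishing at $x=0$, so the identity reduces to matching $[x^n]$ for each $n\in\{1,\ldots,p-1\}$. Splitting the defining sums at the distinguished index $i_k = n$ (resp.\ $j_{d+1-k}=n$) will give
\[
 [x^n] H^{(a)}_{d,k}(x) = \tfrac{1}{n^a}\, H(\{a\}^{k-1};n-1)\, H^{\uparrow}(\{a\}^{d-k};n+1,p-1),
\]
\[
 [x^n] S^{(a)}_{d,d+1-k}(x) = \tfrac{1}{n^a}\, S(\{a\}^{d-k};n)\, S^{\uparrow}(\{a\}^{k-1};n,p-1),
\]
where $H^{\uparrow}, S^{\uparrow}$ denote the obvious strict/non-strict tail sums over the indicated windows. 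The task then becomes the pointwise congruence
\[
 H(\{a\}^{k-1};n-1)\, H^{\uparrow}(\{a\}^{d-k};n+1,p-1) + (-1)^d\, S(\{a\}^{d-k};n)\, S^{\uparrow}(\{a\}^{k-1};n,p-1) \equiv 0 \pmod p.
\]

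To settle this, I will package the four truncated sums as coefficients of bivariate generating functions. Setting
\[
 A(u) = \prod_{i=1}^{n-1}\bigl(1+\tfrac{u}{i^a}\bigr),\; B(v) = \prod_{i=n+1}^{p-1}\bigl(1+\tfrac{v}{i^a}\bigr),\; C(v) = \prod_{i=1}^{n}\bigl(1-\tfrac{v}{i^a}\bigr)^{-1},\; D(u) = \prod_{i=n}^{p-1}\bigl(1-\tfrac{u}{i^a}\bigr)^{-1},
\]
and $E(t) := \prod_{i=1}^{p-1}(1+t/i^a)$, the required congruence is equivalent to $[u^{k-1}v^{d-k}]\bigl(A(u)B(v) - D(-u)C(-v)\bigr) \equiv 0 \pmod p$. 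The evident multiplicative decompositions $A(u)\prod_{i=n}^{p-1}(1+u/i^a) = E(u)$, $B(v)\prod_{i=1}^{n}(1+v/i^a) = E(v)$, and $D(-u)C(-v)\prod_{i=n}^{p-1}(1+u/i^a)\prod_{i=1}^{n}(1+v/i^a) = 1$ will yield
\[
 A(u)B(v) - D(-u)C(-v) = \frac{E(u)E(v) - 1}{\prod_{i=n}^{p-1}(1+u/i^a)\prod_{i=1}^{n}(1+v/i^a)} \quad\text{in }\bbF_p[[u,v]],
\]
whose denominator is a unit (constant term $1$).

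The closing step is to observe that $[u^{r'}v^{s'}](E(u)E(v)-1) = H(\{a\}^{r'};p-1)\,H(\{a\}^{s'};p-1)$ for $(r',s')\neq(0,0)$, and that the running hypothesis $p\ge da+3$ keeps $r'+s' \le d-1 \le p-3$, so each of $r', s'$ lies in $[0,p-2]$ with at least one in $[1,p-2]$; by the cited \cite[Theorem~2.13]{1stpart} the relevant $H(\{a\}^r;p-1)$ vanishes modulo $p$, killing the product. Hence $E(u)E(v)-1$ belongs to the ideal $(u,v)^d\,\bbF_p[[u,v]]$, and dividing by a unit preserves this low-degree vanishing, giving the required congruence. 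The alternating identity \eqref{equ:HS2} will follow by the same template with $1/i^a$ replaced by $(-1)^i/i^a$ throughout, invoking Lemma~\ref{lem:cgl} for the vanishing $h(\{a\}^r;p-1) = H(\{-a\}^r;p-1) \equiv 0 \pmod p$ in the even-$a$ regime where the stated conventions apply. The main obstacle is simply the power-series bookkeeping; all genuine arithmetic input comes from the cited modular vanishing of $H(\{a\}^r;p-1)$ and its alternating analogue.
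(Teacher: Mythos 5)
Your proposal is correct, but it proves the lemma by a genuinely different route than the paper. The paper's proof is an induction on $d$: applying the operator $\left(x\frac{d}{dx}\right)^a$ and multiplying by $x\mp1$ turns $H^{(a)}_{d,k}$, $S^{(a)}_{d,k}$ (and $h$, $s$) into two-term combinations of the depth-$(d-1)$ polynomials, so the inductive hypothesis kills $(x-1)\left(x\frac{d}{dx}\right)^a\bigl(H^{(a)}_{d,k}(x)+(-1)^dS^{(a)}_{d,d+1-k}(x)\bigr)$, after which one integrates back $a$ times using the degree bound and evaluation at $x=0$. You instead extract the coefficient of $x^n$ directly, reduce to a symmetric-function identity in the alphabets $\{i^{-a}\}_{i<n}$, $\{i^{-a}\}_{i>n}$, and settle it via the factorization $A(u)B(v)-D(-u)C(-v)=(E(u)E(v)-1)/(\text{unit})$ in $\bbF_p[[u,v]]$. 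Both arguments draw on exactly the same arithmetic input — the mod $p$ vanishing of $H(\{a\}^{r};p-1)$ from \cite[Theorem~2.13]{1stpart} (which the paper encodes in the boundary conventions $H^{(a)}_{d,0}=H^{(a)}_{d,d+1}=0$, etc.) and, for \eqref{equ:HS2}, the vanishing of $H(\{-a\}^{r};p-1)$ for even $a$ from Lemma~\ref{lem:cgl} — but deploy it differently. The paper's induction is shorter and stays inside $\bbF_p[x]$; your version avoids both the induction and the differential operator, makes completely explicit where the homogeneous vanishing enters (every coefficient of $E(u)E(v)-1$ in the relevant box dies), and isolates the combinatorial content as a clean generating-function identity. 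One small point to keep in the final write-up: as with the paper's own proof, \eqref{equ:HS2} genuinely needs $a$ even (your $E(t)=\prod_i(1+(-1)^it/i^a)$ has coefficients $H(\{-a\}^{r})$, which need not vanish mod $p$ for odd $a$), so you are right to flag that restriction explicitly rather than leave it implicit in the conventions.
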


\begin{proof}
To prove \eqref{equ:HS1} we proceed by induction on $d$. For $d=1$ it holds since
$$H^{(a)}_{1,1}(x)=\sum_{k=1}^{p-1} \frac{x^k}{k^a}=S^{(a)}_{1,1}(x),\quad
H^{(a)}_{1,0}(x)=S^{(a)}_{1,0}(x)=H^{(a)}_{1,2}(x)=S^{(a)}_{1,2}(x)=0.$$
Now we will follow the idea of the proof of  \cite[Theorem~2.3]{Tau1}.
For $1\leq k \leq d$ we have that
\begin{align*}
\left(x\frac{d}{dx}\right)^a H^{(a)}_{d,k}(x)
=&\sum_{0<i_1<\dots<i_{k-1}<i_{k+1}<\dots<i_{d}<p} \frac{1}{(i_1\cdots i_{k-1}i_{k+1}\cdots  i_{d})^a}
\sum_{i_k=i_{k-1}+1}^{i_{k+1}-1} x^{i_k}\\
=&\sum_{0<i_1<\dots<i_{k-1}<i_{k+1}<\dots<i_{d}<p}
\frac{1}{(i_1\cdots i_{k-1}i_{k+1}\cdots  i_{d})^a}\cdot \frac{x^{i_{k+1}}-x^{i_{k-1}+1}}{ x-1}\\
=&\frac{1}{x-1}\, H^{(a)}_{d-1,k}(x)-\frac{x}{x-1}\, H^{(a)}_{d-1,k-1}(x),
\end{align*}
that is
$$(x-1) \left(x\frac{d}{dx}\right)^a
 H^{(a)}_{d,k}(x) =H^{(a)}_{d-1,k}(x)-xH^{(a)}_{d-1,k-1}(x).$$
In a similar way we have that
$$(x-1) \left(x\frac{d}{dx}\right)^a S^{(a)}_{d,k}(x)=xS^{(a)}_{d-1,k}(x)-S^{(a)}_{d-1,k-1}(x).$$
Hence
\begin{align*}
& (x-1) \left(x\frac{d}{dx}\right)^a \left(H^{(a)}_{d,k}(x)+(-1)^{d} S^{(a)}_{d,d+1-k}(x)\right)\\
\ =&
H^{(a)}_{d-1,k}(x)-xH^{(a)}_{d-1,k-1}(x)
+(-1)^{d} xS^{(a)}_{d-1,d+1-k}(x)-(-1)^{d}S^{(a)}_{d-1,d-k}(x)\\
=&H^{(a)}_{d-1,k}(x)+(-1)^{d-1}S^{(a)}_{d-1,d-k}(x)
-x\left(H^{(a)}_{d-1,k-1}(x)+(-1)^{d-1}S^{(a)}_{d-1,d-(k-1)}(x)\right)= 0.
\end{align*}
Thus $\left(x\frac{d}{dx}\right)^{a-1}\Big( H^{(a)}_{d,k}(x)+(-1)^{d} S^{(a)}_{d,d+1-k}(x)\Big)= c$
for some constant $c\in \bbF_p$ since this polynomial has degree less than $p$.
By letting $x=0$ we see that $c=0$. Repeating this process $a$ times yields \eqref{equ:HS1}.

The congruence \eqref{equ:HS2} can be proved in a similar way. In particular, we can
show easily that
\begin{align*}
 -(x+1) \left(x\frac{d}{dx}\right)^a
 h^{(a)}_{d,k}(x) = & h^{(a)}_{d-1,k}(-x)+xh^{(a)}_{d-1,k-1}(-x),\\
 (x+1)\left(x\frac{d}{dx}\right)^a s^{(a)}_{d,k}(x)= & xs^{(a)}_{d-1,k}(-x)+s^{(a)}_{d-1,k-1}(-x).
\end{align*}
So by induction we get
$$(x+1) \left(x\frac{d}{dx}\right)^a \left(H^{(a)}_{d,k}(x)+(-1)^{d} S^{(a)}_{d,d+1-k}(x)\right)=0,$$
which quickly leads to \eqref{equ:HS2}. This concludes the proof of the lemma.
\end{proof}

Similar to Proposition \ref{prop:am-aan} we also have the following
\begin{prop} \label{prop:-ama-an}
Let $m,n$ be two nonnegative integers. Let $a$ be a positive even integer.
For any prime $p>a(m+n)+2$ write $H(-)=H(-;p-1)$.  Then
\begin{align}
\label{equ:-ama-an1}
H(\{-a\}^m,a,\{-a\}^n)\equiv& S(\{-a\}^m,a,\{-a\}^n)& \pmod{p},\\
\label{equ:-ama-an2}
H(\{-a\}^m,a,\{-a\}^n)\equiv& (-1)^{(m+n)} S(\{-a\}^n,a,\{-a\}^m)& \pmod{p}.
\end{align}
\end{prop}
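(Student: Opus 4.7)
The plan is to follow the exact template used for Proposition~\ref{prop:am-aan}, only with the identity~\eqref{equ:HS2} in place of~\eqref{equ:HS1}, since the alternating variants $h^{(a)}_{d,k}$ and $s^{(a)}_{d,k}$ encode exactly the sign pattern $(\{-a\}^m, a, \{-a\}^n)$ that we need. First I would specialize~\eqref{equ:HS2} at $x=-1$, $d=m+n+1$, $k=m+1$. The key computation is that the monomial $x^{i_k}=(-1)^{i_k}$ cancels the factor $(-1)^{i_k}$ inside $(-1)^{\sum i_j}$, leaving exactly $(-1)^{\sum_{j\ne k}i_j}$; this turns the $k$-th slot into $+a$ and all other slots into $-a$. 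Consequently
\[
 h^{(a)}_{m+n+1,m+1}(-1) = H(\{-a\}^m,a,\{-a\}^n), \qquad
 s^{(a)}_{m+n+1,n+1}(-1) = S(\{-a\}^n,a,\{-a\}^m),
\]
and \eqref{equ:HS2} immediately yields
\[
 H(\{-a\}^m,a,\{-a\}^n)\equiv (-1)^{m+n}\, S(\{-a\}^n,a,\{-a\}^m)\pmod{p},
\]
which is precisely \eqref{equ:-ama-an2}.

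Next I would derive \eqref{equ:-ama-an1} from \eqref{equ:-ama-an2} using the reversal relation~\eqref{equ:reversal} applied to $S$. Since $a$ is even, the weight $|\ors|=a(m+n+1)$ is even, so the only surviving sign factor comes from $\sign(\prod s_j)=(-1)^{m+n}$, giving
\[
 S(\{-a\}^n,a,\{-a\}^m)\equiv (-1)^{m+n}\, S(\{-a\}^m,a,\{-a\}^n)\pmod{p}.
\]
Substituting this into \eqref{equ:-ama-an2} the two signs $(-1)^{m+n}$ cancel, producing
\[
 H(\{-a\}^m,a,\{-a\}^n)\equiv S(\{-a\}^m,a,\{-a\}^n)\pmod{p},
\]
which is \eqref{equ:-ama-an1}.

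The only genuinely non-routine point in the whole argument is the boundary check needed to legitimately quote~\eqref{equ:HS2} in the present range: one must verify that the companion conventions $h^{(a)}_{d,0}(x)=s^{(a)}_{d,d+1}(x)=s^{(a)}_{d,0}(x)=h^{(a)}_{d,d+1}(x)=0$ remain valid, which uses $p>a(m+n)+2$ together with Lemma~\ref{lem:cgl} and Corollary~\ref{cor:Hdepth1} to ensure $H(\{-a\}^d)\equiv S(\{-a\}^d)\equiv 0\pmod p$ for even $a$. Everything else is mechanical: the arithmetic is identical in structure to the $H^{(a)}$/$S^{(a)}$ case, and the parity restriction on $a$ is used exactly once (to kill the $(-1)^{|\ors|}$ in the reversal step).
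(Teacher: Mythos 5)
Your proposal is correct and follows essentially the same route as the paper: specialize \eqref{equ:HS2} at $x=-1$, $k=m+1$, $d=m+n+1$ to obtain \eqref{equ:-ama-an2}, then apply the reversal relation for $S$ (whose sign $(-1)^{|\ors|}$ disappears because $a$ is even) to deduce \eqref{equ:-ama-an1}; your explicit sign bookkeeping and the check of the boundary conventions $h^{(a)}_{d,0}=s^{(a)}_{d,d+1}=\cdots=0$ for even $a$ just make explicit what the paper's two-line proof leaves implicit. The only discrepancy is with the paper's own wording, which swaps the words ``first'' and ``second'' when referring to the two displayed congruences, not with your argument.
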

\begin{proof} The second congruence \eqref{equ:-ama-an1} follows from \eqref{equ:HS2}
by taking $x=-1$, $k=m+1$ and $d=m+n+1$. The first congruence
\eqref{equ:-ama-an2} follows \eqref{equ:-ama-an1} by the reversal relation
since $a$ is even.
\end{proof}

\section{AMHS of weight four}\label{sec:wt4}
In \cite{Tau1} the first author studied the congruence properties
of AMHS of weight less than four. In this section, applying the
results obtained in the previous sections we can analyze the weight
four AMHS in some detail. First we treat some special congruences
which can not be obtained by just using the stuffle relations and
the reversal relations. Let $p\ge 7$ be a prime and
set $A=A_p,\cdots, K=K_p$ as follows:
\begin{alignat*}{3}
&A:=\sum_{k=2}^{p-3} B_kB_{p-3-k},\quad
&B:=\sum_{k=2}^{p-3} 2^k B_kB_{p-3-k} ,\quad\,
&C:=\sum_{k=2}^{p-3} 2^{p-3-k} B_kB_{p-3-k} ,\\
&D:=\sum_{k=2}^{p-3} \frac{B_kB_{p-3-k}}{k},\quad
&E:=\sum_{k=2}^{p-3} \frac{2^k B_kB_{p-3-k}}{k},\quad\,
&F:=\sum_{k=2}^{p-3} \frac{2^{p-3-k} B_kB_{p-3-k}}{k},\\
&G:=\sum_{k=2}^{p-3} k B_kB_{p-3-k},\quad
&J:=\sum_{k=2}^{p-3} 2^k k B_kB_{p-3-k},\quad
&K:=\sum_{k=2}^{p-3} 2^{p-3-k} k B_kB_{p-3-k}.
\end{alignat*}
Then by \cite[Cororllary~3.6]{1stpart} and simple computation
\begin{equation}\label{equ:relationsA-J}
A\equiv -B_{p-3}, \quad G\equiv 0, \quad
C\equiv B-\frac34 A, \quad  K\equiv -3B-J+3A \pmod{p}.
\end{equation}

\begin{prop} \label{prop:wt4depth2}
For all prime $p\ge 7$ write $H(-)=H(-;p-1)$. Then  we have
\begin{alignat}{2}
 H(1,-3)\equiv&  \frac12 H(-2,2) \equiv B-A\equiv
    \sum_{k=0}^{p-3} 2^k B_kB_{p-3-k} &\pmod{p}, \label{equ:2H1-3=H-22}\\
     H(-1,3)\equiv&  -\frac12 q_pB_{p-3} &\pmod{p}.\label{equ:H-13}
\end{alignat}
\end{prop}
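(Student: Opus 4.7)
The plan is to reduce each depth-two AMHS to a combination of depth-one AMHS via the reduction theorems of Section~\ref{sec:red}, evaluate the depth-one tails using Corollary~\ref{cor:Hdepth1}, and then consolidate the resulting Bernoulli-number convolutions using \eqref{equ:relationsA-J} together with auxiliary identities of Miki--Gessel type.

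For $H(1,-3)$, I would apply Proposition~\ref{prop:wteven} with $(a,b)=(1,3)$, which gives
\[
H(1,-3)\equiv \sum_{k=0}^{p-2}\binom{p-1}{k}\frac{2(1-2^{2p-4-k})B_kB_{2p-4-k}}{(p-1)(2p-4-k)}\pmod{p}.
\]
Only even $k$ survive, since $B_k=0$ for odd $k\ge 3$ and $B_{2p-5}=0$. For $0\le k\le p-5$, Fermat's little theorem ($2^{2p-4-k}\equiv 2^{p-3-k}$), the Kummer congruence $B_{2p-4-k}/(2p-4-k)\equiv B_{p-3-k}/(p-3-k)$, $\binom{p-1}{k}\equiv 1$ and $\frac{1}{p-1}\equiv -1$ compress each summand to $-\frac{2(1-2^{p-3-k})B_kB_{p-3-k}}{p-3-k}$. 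The boundary index $k=p-3$ must be handled separately, since there $2p-4-k=p-1$ and $B_{p-1}$ is not $p$-integral; the identity $(1-2^{p-1})B_{p-1}\equiv q_p\pmod{p}$ then contributes $2q_pB_{p-3}$. Re-indexing $m=p-3-k$ collapses the bulk to $2(E-D)$, so $H(1,-3)\equiv 2(E-D)+2q_pB_{p-3}\pmod{p}$. An auxiliary Bernoulli convolution identity $2(E-D)\equiv B+(1-2q_p)B_{p-3}\pmod{p}$ then yields $H(1,-3)\equiv B+B_{p-3}$, which equals $B-A$ by $A\equiv -B_{p-3}$ from \eqref{equ:relationsA-J}, and also equals $\sum_{k=0}^{p-3}2^kB_kB_{p-3-k}$ (the $k=0$ term giving $B_{p-3}$ and $k=1$ killed by $B_{p-4}=0$ for $p\ge 7$).

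For $\frac12 H(-2,2)\equiv H(1,-3)$, I would apply Proposition~\ref{prop:wteven2} with $a=b=2$. The outer prefactor $\frac{(1-2^{p-2})B_{p-2}}{p-2}(H(2)-H(-2))$ vanishes since $B_{p-2}=0$, and the remaining sum (with odd parity selection, Fermat/Kummer compression, and boundary handling at $k=p-4$) produces a Bernoulli convolution with weights $(j+1)/(j+3)$ whose identification with $2\bigl(2(E-D)+2q_pB_{p-3}\bigr)$ requires a further weighted Bernoulli identity. For $H(-1,3)$, I would apply Proposition~\ref{prop:wteven2} with $(a,b)=(1,3)$. Here the outer prefactor is non-zero: using $(1-2^{p-1})B_{p-1}\equiv q_p\pmod{p}$ and $H(-3)\equiv -\tfrac12 B_{p-3}\pmod{p}$ (which follows from Corollary~\ref{cor:Hdepth1} with $2^{p-3}\equiv 1/4\pmod{p}$), it delivers exactly $q_pH(-3)\equiv -\tfrac12 q_pB_{p-3}$. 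Parity arguments parallel to those in the $H(1,-3)$ calculation, together with the same auxiliary convolution identity, then force the remaining sum to vanish mod $p$.

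The main obstacle is the set of Bernoulli convolution identities needed for the final consolidation, most prominently $2(E-D)\equiv B+(1-2q_p)B_{p-3}\pmod{p}$ and the weighted identity governing the $H(-2,2)$ sum. These mix $1/k$ denominators with powers of $2$ and cannot be read off from \eqref{equ:relationsA-J} directly; they would need to be proved by generating-function manipulations (for instance, by combining derivatives of $t/(e^t-1)$ and $2t/(e^{2t}-1)$) or by imitating the derivations of \cite[Corollary~3.6]{1stpart}. Everything else in the argument, the reduction step, Fermat/Kummer substitutions, parity vanishings, re-indexings, and the handling of the $k=p-3$ and $k=p-4$ boundary terms, is essentially bookkeeping.
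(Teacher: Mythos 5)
Your reduction computations are sound and coincide with the first half of the paper's own argument: applying Proposition~\ref{prop:wteven} to $H(1,-3)$ and compressing via Fermat/Kummer does give $H(1,-3)\equiv 2E-2D+2q_pB_{p-3}\pmod p$, and the boundary behaviour at the index where $B_{p-1}$ appears is handled correctly. The gap is in the consolidation step. The identity $2(E-D)\equiv B+(1-2q_p)B_{p-3}\pmod p$ that you invoke is not an available input: given your reduction it is logically equivalent to the congruence $H(1,-3)\equiv B-A$ that you are trying to prove, so deferring it to unspecified ``generating-function manipulations'' or Miki--Gessel type arguments leaves the proof circular at its crux. The same objection applies to the ``further weighted Bernoulli identity'' you need for $H(-2,2)$ and to the vanishing of the residual sum in your $H(-1,3)$ computation (which amounts to $\tfrac52 D-2E-2F\equiv\tfrac32 q_pB_{p-3}$). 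None of these convolutions mixing $1/k$-weights with powers of $2$ is proved in your proposal, and they are exactly the content of the proposition; \eqref{equ:relationsA-J} does not reach them.

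The paper avoids needing any such external identity by computing each of $H(-3,1)$, $H(-2,2)$, $H(-1,3)$ \emph{twice}: once via Proposition~\ref{prop:wteven2}, giving \eqref{equ:-31I}--\eqref{equ:-13I} in terms of $A,B,D,E,F,q_pB_{p-3}$, and once via Proposition~\ref{prop:wteven} applied to the reversed sums $H(1,-3)$, $H(2,-2)$, $H(3,-1)$, giving \eqref{equ:-31II}--\eqref{equ:-13II} in terms of $A,B,D,E,J,q_pB_{p-3}$. Linear combinations of these six congruences eliminate the unknown convolutions: adding \eqref{equ:-31I}, \eqref{equ:-22I}, \eqref{equ:-31II}, \eqref{equ:-22II} yields $4H(-3,1)+2H(-2,2)\equiv 0$; adding \eqref{equ:-31II} and \eqref{equ:-22II} yields $-H(-3,1)\equiv B-A$; and substituting back into \eqref{equ:-31I} produces $\tfrac52D-2E-2F\equiv\tfrac32q_pB_{p-3}$ as a \emph{consequence}, which combined with \eqref{equ:-13I} settles $H(-1,3)$. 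To complete your route you must either prove your auxiliary convolution identities independently or adopt this elimination strategy; as written, the argument is incomplete at its essential step.
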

\begin{proof}
We take congruence modulo $p$ throughout this proof.
By Proposition~\ref{prop:wteven2} we have
\begin{align*}
H(-3,1) \equiv
 &-\frac13\sum_{k=1}^{p-6}(k+2)(k+3)(1-2^{k+1})(1-2^{p-4-k})
    \frac{B_{k+1} B_{p-4-k}}{k+4}-\frac12 q_pB_{p-3}   \\
 \equiv& \frac13\sum_{k=2}^{p-3}(k+1)(k+2)(1-2^k)(1-2^{p-3-k})
    \frac{B_k B_{p-3-k}}{k}-\frac12 q_pB_{p-3}.
\end{align*}
by the substitution $k\to p-4-k$. Similarly we can get
\begin{align*}
H(-2,2)  \equiv&-\sum_{k=2}^{p-3}(k+2)
     (1-2^k)(1-2^{p-3-k})\frac{{B_k} B_{p-3-k}}{k}
 +\frac32 q_pB_{p-3}  \\
H(-1,3)  \equiv&2\sum_{k=2}^{p-3}
     (1-2^k)(1-2^{p-3-k})\frac{{B_k} B_{p-3-k}}{k}-2q_pB_{p-3}
\end{align*}
Using \eqref{equ:relationsA-J} we reduce the above to
\begin{align}
3 H(-3,1) \equiv & 3A-3B+ \frac52D-2E-2F-\frac32 q_pB_{p-3} \label{equ:-31I}  \\
H(-2,2) \equiv&   2B-2A-\frac52D+2E+2F+\frac32 q_pB_{p-3}  \label{equ:-22I}  \\
H(-1,3) \equiv& \phantom{2B-2A}+\frac52D-2E-2F-2q_pB_{p-3} \label{equ:-13I}
\end{align}
On the other hand, by Proposition~\ref{prop:wteven} we get
\begin{align*}
H(1,-3)\equiv& \frac{2}{p-1}\sum_{k=0}^{p-2}{p-1\choose k}
\frac{1-2^{2p-4-k}}{2p-4-k}B_kB_{2p-4-k} \\
\equiv&  -2\sum_{k=0}^{p-5}
\frac{1-2^{p-3-k}}{p-3-k}B_kB_{p-3-k}+2q_pB_{p-3} \\
\equiv&  -2\sum_{k=2}^{p-3}
\frac{1-2^k}{k}B_kB_{p-3-k}+2q_pB_{p-3} \\
\equiv&   2E-2D+2q_pB_{p-3}.
\end{align*}
by the substitution $k\to p-3-k$. Thus by the reversal relation
\begin{equation} \label{equ:-31II}
 H(-3,1) \equiv -H(1,-3)\equiv  2D-2E-2q_pB_{p-3}.
\end{equation}
Similarly we find
\begin{align}
H(-2,2) \equiv& -H(2,-2)\equiv  B-A+2E-2D+2q_pB_{p-3}, \label{equ:-22II}  \\
H(-1,3) \equiv& -H(3,-1)\equiv \frac{1}{3}\Big(-J+3A-3B+2D-2E\Big).\label{equ:-13II}
\end{align}
Then by adding \eqref{equ:-31I}, \eqref{equ:-22I},
\eqref{equ:-31II} and \eqref{equ:-22II} altogether we have
\begin{equation*}
4H(-3,1)+2H(-2,2)\equiv  0
\end{equation*}
which implies the first congruence in \eqref{equ:2H1-3=H-22}. Now adding
\eqref{equ:-31II} and \eqref{equ:-22II} yields
\begin{equation}\label{equ:addagainH(-3,1)}
 -H(-3,1)\equiv H(-3,1)+H(-2,2)\equiv B-A
\end{equation}
which is the second congruence in \eqref{equ:2H1-3=H-22}.
Plugging this into \eqref{equ:-31I} we see that
\begin{equation*}
\frac52D-2E-2F-\frac32 q_pB_{p-3}\equiv 0
\end{equation*}
which combined with  \eqref{equ:-13I} produces \eqref{equ:H-13}.
This finishes the proof of the proposition.
\end{proof}

\begin{prop} \label{prop:wt4depth3}
For all prime $p\ge 7$ write $H(-)=H(-;p-1)$. Then  we have
\begin{alignat}{3}
H(1,-1,-2)\equiv& \frac12 H(1,-3)+\frac12  J& \ &\pmod{p}, \label{equ:2H1-1-2=H1-3}\\
H(1,-2,-1)\equiv& \phantom{-} H(1,-3) &-\frac54 q_pB_{p-3} &\pmod{p}, \label{equ:H1-2-1=H1-3}\\
H(2,-1,-1)\equiv& -H(1,-3)-\frac12 J&+\frac34 q_pB_{p-3} &\pmod{p}.\label{equ:H2-1-1=H1-3}
\end{alignat}
\end{prop}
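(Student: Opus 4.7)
The plan is to apply Theorem~\ref{thm:wtevendepth3} directly to each of the three triples $(a,b,c)\in\{(1,1,2),(1,2,1),(2,1,1)\}$; each has $w=a+b+c=4$ even and $p\ge 7\ge w+3$, so the theorem gives $H(a,-b,-c)\equiv \Sigma_1+\Sigma_2+\Sigma_3\pmod p$, where $\Sigma_1$ is the main double sum over $k\in[2,p-3]$, $\Sigma_2$ is the boundary sum over $k\in[p+1-b-c,p-c]$, and $\Sigma_3$ is the single correction term $-(1-2^{p-c})(1-2^{p-a-b})B_{p-a-b}B_{p-c}/((a+b)c)$.

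To evaluate $\Sigma_1$, I would use the standard congruences $\binom{p-1}{j}\equiv(-1)^j$ and $\binom{p-2}{j}\equiv(-1)^j(j+1)\pmod p$. Since $B_k=0$ for odd $k\ge 3$, only even $k$ contribute to $\Sigma_1$ and the sign factors $(-1)^{p-3-k}$ reduce to $+1$. After expanding $\binom{k+c-1}{c}$, each $\Sigma_1$ collapses to a rational combination of the auxiliary sums $A,B,G,J$ from the start of \S\ref{sec:wt4}. Applying \eqref{equ:relationsA-J} (so $G\equiv 0$ and $A\equiv -B_{p-3}$) together with $B-A\equiv H(1,-3)$ from Proposition~\ref{prop:wt4depth2}, each $\Sigma_1$ can be rewritten in terms of only $H(1,-3)$ and $J$.

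To evaluate $\Sigma_2$ and $\Sigma_3$, observe that these pieces involve only the Bernoulli numbers $B_{p-2}$ and $B_{p-1}$. Since $p-2$ is odd and $>1$ we have $B_{p-2}=0$. The factor $B_{p-1}$ is non-$p$-integral but is always paired with $(1-2^{p-1})=-pq_p$; combining with the von Staudt--Clausen congruence $pB_{p-1}\equiv -1\pmod p$ yields the integral identity $(1-2^{p-1})B_{p-1}\equiv q_p\pmod p$. Using $2^{p-3}\equiv 1/4$ (so $1-2^{p-3}\equiv 3/4\pmod p$), both $\Sigma_2$ and $\Sigma_3$ reduce to explicit multiples of $q_pB_{p-3}$.

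Assembling the three pieces yields the three claimed congruences. The case $(1,1,2)$ is cleanest because every Bernoulli factor in $\Sigma_2$ and $\Sigma_3$ is $B_{p-2}=0$, so those pieces vanish and $H(1,-1,-2)\equiv \tfrac12 H(1,-3)+\tfrac12 J$ comes directly from $\Sigma_1$. For $(1,2,1)$ and $(2,1,1)$ the combined contributions from $\Sigma_2$ and $\Sigma_3$ produce the $-\tfrac54 q_pB_{p-3}$ and $+\tfrac34 q_pB_{p-3}$ coefficients respectively. The main bookkeeping obstacle lies in tracking signs through $\binom{p-2}{p-3-k}\equiv -(-1)^k(k+2)\pmod p$ for the $(2,-1,-1)$ computation, and in correctly pairing the singular $B_{p-1}$ factor with $(1-2^{p-1})$ to extract a finite $q_pB_{p-3}$ contribution from each boundary term.
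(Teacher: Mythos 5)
Your proposal is correct and follows essentially the same route as the paper: apply Theorem~\ref{thm:wtevendepth3} to the triples $(1,1,2)$, $(1,2,1)$, $(2,1,1)$, reduce the main sum to the quantities $A,B,G,J$ via the binomial congruences and the vanishing of odd-index Bernoulli numbers, invoke \eqref{equ:relationsA-J} and $B-A\equiv H(1,-3)$ from Proposition~\ref{prop:wt4depth2}, and extract the $q_pB_{p-3}$ contributions from the boundary terms using $(1-2^{p-1})B_{p-1}\equiv q_p$. Your boundary-term bookkeeping (vanishing for $(1,1,2)$, totals $-\tfrac54 q_pB_{p-3}$ and $+\tfrac34 q_pB_{p-3}$ for the other two) and the sign $\binom{p-2}{p-3-k}\equiv -(-1)^k(k+2)$ agree with the stated congruences, so nothing further is needed.
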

\begin{proof} By Theorem~\ref{thm:wtevendepth3}, Proposition~\ref{prop:wt4depth2} and
\eqref{equ:relationsA-J} we get
\begin{align}
2H(1,-1,-2)\equiv& -\sum_{k=2}^{p-3}(k+1) (1-2^k)B_kB_{p-3-k}
\equiv B-A+J-G=H(1,-3)+J \label{equ:H1-1-2Use} \\
H(1,-2,-1)\equiv& -\sum_{k=2}^{p-3}(1-2^k)B_kB_{p-3-k}-\frac54 q_pB_{p-3}
\equiv H(1,-3)-\frac54 q_pB_{p-3} \notag \\
H(2,-1,-1)\equiv& \sum_{k=2}^{p-3} \frac{(k+2)(1-2^k)B_kB_{p-3-k}}{-2}+\frac34 q_pB_{p-3}
\equiv -H(1,-3)-\frac12 J+\frac34 q_pB_{p-3}, \notag
\end{align}
as claimed.
\end{proof}

By \eqref{equ:ab-cEven} and \eqref{equ:addagainH(-3,1)} it is readily seen that
\begin{equation*}
2H(1,1,-2)\equiv H(-3,1)+H(-2,2)\equiv -H(-3,1).
\end{equation*}
In fact, by the stuffle and the reversal relations
we can find congruences of all weight four AMHS of depth up to three.
By the reversal relations we only need to list about half of the values.
\begin{prop}\label{prop:allwt4}
For all prime $p\ge 7$ write $H(-)=H(-;p-1)$ and set
$H_{\bar{2}11}:=H(-2,1,1)$. Then $H_{\bar{2}11}=(A-B)/2$ and
{$$
\begin{array}{rll}
& H(4) \equiv H(-4) \equiv H(2,2)\equiv H(-2,-2) \equiv
    &\ \hskip-22pt H(1,3)\equiv H(1,-2,1) \equiv H(-1,-2,-1) \equiv 0, \\
& H(1,-3) \equiv -2 H_{\bar{2}11},\quad\
H(2,-2) \equiv 4 H_{\bar{2}11},\quad &
H(1,-1,2) \equiv 3 H_{\bar{2}11},\\
& H(-1,-3) \equiv\frac12 q_p B_{p-3},\quad
& H(3,-1) \equiv\frac12 q_p B_{p-3},\\
&H(1,-1,-2) \equiv -H_{\bar{2}11}+\frac12 J,
& H(-2,-1,-1) \equiv 2 H_{\bar{2}11}-q_p B_{p-3},\\
&H(-1,2,1) \equiv -H_{\bar{2}11}+\frac54q_p B_{p-3},\quad
& H(-1,1,2) \equiv 2 H_{\bar{2}11}-\frac34 q_p B_{p-3},\\
&H(-2,1,-1) \equiv 3 H_{\bar{2}11}-\frac12 J+\frac34 q_p B_{p-3},\quad
& H(1,-2,-1) \equiv -2 H_{\bar{2}11}-\frac54q_p B_{p-3},\\
&H(-1,2,-1) \equiv -4 H_{\bar{2}11}+J-\frac52q_p B_{p-3},\quad
& H(2,-1,-1) \equiv 2 H_{\bar{2}11}-\frac12 J+\frac34 q_p B_{p-3}.
\end{array}$$}
\end{prop}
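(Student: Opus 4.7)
The plan is to traverse every weight-four AMHS of depth at most three and reduce it modulo $p$ to a $\Q$-linear combination of $\{1, H_{\bar{2}11}, q_pB_{p-3}, J\}$, by systematically combining Theorem~\ref{thm:depth2} and the palindromic Proposition, Propositions~\ref{prop:wt4depth2} and \ref{prop:wt4depth3}, the depth-three reductions \eqref{equ:a-bcEven}--\eqref{equ:-a-b-cEven}, the stuffle relation, and the reversal relation.

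The vanishing entries follow immediately: $H(4), H(2,2), H(1,3)$ vanish by \eqref{equ:l2wteven1}; $H(-4)$ vanishes by Corollary~\ref{cor:Hdepth1}(b) read modulo $p$; $H(-2,-2)$ vanishes by \eqref{equ:l2wteven2} since $p-2$ is odd forces $B_{p-2}=0$; and $H(1,-2,1), H(-1,-2,-1)$ vanish by the palindromic Proposition (in both the number of negative components is odd while the weight is even). The nonvanishing depth-two values $H(1,-3), H(-2,2), H(-1,3)$ are exactly Proposition~\ref{prop:wt4depth2}, and the reversed partners $H(-3,1), H(2,-2), H(3,-1)$ come from the reversal relation. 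The last depth-two value $H(-1,-3)$ is extracted from the stuffle identity $H(-1)H(-3)\equiv H(-1,-3)+H(-3,-1)+H(4)$ combined with the self-reversal pairing $H(-1,-3)\equiv H(-3,-1)$ and the evaluations $H(-1)\equiv -2q_p$, $H(-3)\equiv -\tfrac12 B_{p-3}$ modulo $p$ from Corollary~\ref{cor:Hdepth1} (the latter uses $2^{p-3}\equiv 1/4\pmod p$).

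For depth three, the congruences $H(1,-1,-2), H(1,-2,-1), H(2,-1,-1)$ are Proposition~\ref{prop:wt4depth3} after the substitution $H(1,-3)=-2H_{\bar{2}11}$. The remaining depth-three entries are obtained by feeding our depth-two data into the relevant reduction: \eqref{equ:a-bcEven} for the $(+,-,+)$ pattern, \eqref{equ:ab-cEven} for $(+,+,-)$, and \eqref{equ:-a-b-cEven} for $(-,-,-)$, with the reverse-order patterns following from reversal. The one self-reversal sign pattern not covered by these reductions, namely $(-,+,-)$, is handled by the stuffle identity $H(-1)\cdot H(2,-1)\equiv H(-1,2,-1)+2H(2,-1,-1)+H(2,2)+H(-3,-1)$, together with the depth-two ingredients $H(2,-1)\equiv\tfrac14 B_{p-3}$ (from \eqref{equ:l2wtodd}) and $H(2,2)\equiv 0$.

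Finally we verify $H_{\bar{2}11}\equiv(A-B)/2$. Applying \eqref{equ:ab-cEven} with $(a,b,c)=(1,1,2)$ and using $H(-2)\equiv H(1,1)\equiv 0$ yields $2H(1,1,-2)\equiv H(-3,1)+H(-2,2)\equiv -H(1,-3)+2H(1,-3)=H(1,-3)\equiv B-A$, and reversal converts $H(1,1,-2)$ to $H(-2,1,1)=H_{\bar{2}11}$ with the correct sign. The main obstacle is essentially bookkeeping: verifying each entry requires tracking signs from reversal in depth three (where the weight and the depth have opposite parities) and accumulating a handful of stuffle expansions so that every entry collapses onto the four-element basis. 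Every individual step, however, is a direct appeal to Propositions~\ref{prop:wt4depth2}--\ref{prop:wt4depth3}, the depth-three reductions of Section~\ref{sec:depth3}, Theorem~\ref{thm:depth2}, and the relations \eqref{equ:relationsA-J}.
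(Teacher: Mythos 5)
Your proposal is correct and is essentially the paper's own (largely unwritten) argument: the proposition is stated without a proof environment precisely because it is the bookkeeping you describe --- feeding Theorem~\ref{thm:depth2}, Propositions~\ref{prop:wt4depth2} and~\ref{prop:wt4depth3}, and the depth-three even-weight reductions \eqref{equ:a-bcEven}--\eqref{equ:-a-b-cEven} into stuffle and reversal --- and your sample computation $2H(1,1,-2)\equiv H(-3,1)+H(-2,2)\equiv -H(-3,1)$ is exactly the paper's own remark immediately preceding the statement. Two small points you should make explicit: the reversal factor must be $\sign\big(\prod_j s_j\big)(-1)^{|\ors|}$ rather than the $(-1)^{\ell(\ors)}$ printed in \eqref{equ:reversal} (a typo in the paper; your check that $H(-2,1,1)\equiv -H(1,1,-2)$, and the consistency of $H(1,2)\equiv -H(2,1)$ with \eqref{equ:l2wtodd1}, confirm the weight convention), and your single stuffle identity for the self-reversal pattern $(-,+,-)$ only yields $H(-1,2,-1)$, so the entry $H(-2,1,-1)$ needs one more expansion, e.g.\ of $H(1)H(-2,-1)\equiv 0$, which reduces it to the already-known $H(1,-2,-1)$, $H(-2,-1,1)\equiv H(1,-1,-2)$, $H(-2,-2)$ and $H(-3,-1)$.
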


We now turn to the depth four cases.

\begin{prop} \label{prop:wt4depth3Eg}
Let $p\ge 7$ be a prime and write $H(-)=H(-;p-1)$. Then we have
\begin{alignat}{5}
H(1,-1,-1,1)\equiv  &-&\frac12
    \Big(&H(1,-3)&+J & & &+q_p^4\Big) & \pmod{p}, \label{equ:H1-1-11}\\
H(-1,-1,1,1)\equiv H(1,1,-1,-1)\equiv& &\frac1{24}
    \Big(& &6J&+ 7&q_pB_{p-3}&+8q_p^4\Big) & \pmod{p}, \label{equ:H11-1-1} \\
H(-1,1,-1,1)\equiv H(1,-1,1,-1)\equiv &-&\frac1{12}
    \Big(& & & &q_pB_{p-3}&+2q_p^4\Big) & \pmod{p},  \label{equ:H1-11-1}\\
H(-1,1,1,-1)\equiv&  &\frac1{12}\Big(& 6H(1,-3)& &+7&q_pB_{p-3}&+2q_p^4\Big)
   &  \pmod{p}. \label{equ:H-111-1}
\end{alignat}
\end{prop}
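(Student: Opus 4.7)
The plan is to derive each of the four congruences by reducing the weight-$4$, depth-$4$ AMHS to linear combinations of depth-$\leq 3$ AMHS whose values are already tabulated. The workhorse is the stuffle product~\eqref{equ:defnstuffle}: for any two lower-depth AMHS $H(w_1)$ and $H(w_2)$ with $\ell(w_1)+\ell(w_2)=4$, expanding $H(w_1)\cdot H(w_2)$ via the stuffle yields an expression for a linear combination of the four target depth-$4$ AMHS plus contraction terms that are depth-$3$ (and depth-$2$) AMHS of weight~$4$. The depth-$3$ contractions are all covered by Propositions~\ref{prop:wt4depth3} and~\ref{prop:allwt4}, while the depth-$2$ contractions $H(-2,-2)$, $H(2,2)$, $H(1,3)$, $H(3,1)$ all vanish modulo~$p$ by Theorem~\ref{thm:depth2}.

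Concretely, write $T_1 := H(1,-1,-1,1)$, $T_2 := H(-1,-1,1,1)$, $T_3 := H(-1,1,-1,1)$, $T_4 := H(-1,1,1,-1)$, and set $X := H(1,-3)$, $Y := q_p B_{p-3}$, $Z := q_p^4$. Three stuffles with vanishing left-hand sides come from $H(1;p-1)\equiv 0$ and $H(1,1;p-1)\equiv 0$ (Wolstenholme): namely $H(1)\ast H(1,-1,-1)$, $H(1)\ast H(-1,1,-1)$, and $H(-1,-1)\ast H(1,1)$. Expanding these and using the reversal relation~\eqref{equ:reversal} to collapse pairs of equal depth-$3$ AMHS (for instance $H(-1,-2,1)\equiv H(1,-2,-1)$ and $H(-2,-1,1)\equiv H(1,-1,-2)$ in even weight) and then substituting the values in Propositions~\ref{prop:wt4depth3} and~\ref{prop:allwt4} yields
\[
T_1 + 2T_2 + T_3 \equiv \tfrac{Y-X}{2},\qquad T_3+T_4 \equiv \tfrac{X+Y}{2},\qquad T_1+2T_2+2T_3+T_4 \equiv Y \pmod p,
\]
of which only two are independent. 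A fourth, independent relation comes from the stuffle $H(1,-1)^2$: its left-hand side equals $Z\pmod p$ by the weight-$2$ congruence $H(1,-1;p-1)\equiv q_p^2\pmod p$ (a companion of Corollary~\ref{cor:Hdepth1} obtainable by specializing Proposition~\ref{prop:wteven} with $a=b=1$ or cited from~\cite[Theorem~2.2]{Tau1}), and its right-hand side involves $H(1,1,2)$, which vanishes modulo~$p$ by combining the stuffles $H(1)H(1,2)\equiv 0$ and $H(2)H(1,1)\equiv 0$ with Proposition~\ref{prop:allwt4}; this gives $4T_2+2T_3\equiv Y+J+Z\pmod p$.

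To remove the remaining degree of freedom, one more relation is needed. The plan is to compute the stuffle $H(-1)\ast H(1,-1,1)$: its left-hand side equals $-2q_p\cdot H(1,-1,1)$, and $H(1,-1,1)$ can be expressed modulo $p$ via Proposition~\ref{prop:-11r} (which gives $H(-1,1,1)\equiv U(-3)\pmod p$) together with the weight-$3$ values $H(-2,1)$ and $H(-1,2)$ from Theorem~\ref{thm:depth2}. The key input is then the closed form $U(-3)\equiv -(7B_{p-3}+8q_p^3)/24\pmod p$, which itself follows from combining the homogeneous expression~\eqref{equ:depth3FermatQ} for $H(\{-1\}^3)$, the Fermat-quotient expansion~\eqref{equ:H-1}, and Lemma~\ref{lem:cgl} (or equivalently the stuffle $H(-1)\ast H(-1,-1)=3H(\{-1\}^3)+H(2,-1)+H(-1,2)$). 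Substituting this value produces the fifth equation from which $T_3$ is determined, and hence $T_4$, $T_2$, $T_1$ follow. The main obstacle is the bookkeeping in the thirteen-term expansions of the depth-$2\ast$~depth-$2$ stuffles together with the careful evaluation of $U(-3)\pmod p$; this latter step is what transmits the abstract homogeneous data in Lemma~\ref{lem:cgl} into the explicit formulas involving $H(1,-3)$, $q_pB_{p-3}$, and $q_p^4$, and it is the step most prone to sign errors.
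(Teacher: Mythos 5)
Your overall strategy differs from the paper's: you determine the four reversal-classes $T_1,\dots,T_4$ purely from a linear system of stuffle relations with known depth-$\le 3$ inputs, whereas the paper attacks $H(1,-1,-1,1)$ and $H(1,1,-1,-1)$ head-on with the reduction formula of Theorem~\ref{thm:reduction}, a delicate boundary-term analysis at $k=p-3$, and the Bernoulli convolutions $A,B,J$, and only then gets \eqref{equ:H1-11-1} and \eqref{equ:H-111-1} by stuffling. I checked your four relations: $T_1+2T_2+T_3\equiv\frac{Y-X}{2}$, $T_3+T_4\equiv\frac{X+Y}{2}$, $4T_2+2T_3\equiv J+Y+Z$ and $T_1+T_3\equiv-\frac12X-\frac12J-\frac1{12}Y-\frac23Z$ are all correct, the $4\times4$ system is nonsingular, and its unique solution is exactly the stated proposition. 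So the skeleton is sound and arguably cleaner than the paper's computation. (One warning: the reversal relation \eqref{equ:reversal} as printed carries $(-1)^r$, but the sign that actually makes your depth-three collapses such as $H(-1,-2,1)\equiv H(1,-2,-1)$ come out right is $(-1)^{|\ors|}$; you are implicitly using the corrected version, which is the one consistent with Propositions~\ref{prop:wt4depth3} and~\ref{prop:allwt4}.)

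Two of your sub-derivations, however, would fail as written. First, $H(1,1,2)\equiv 0\pmod p$ does not follow from the stuffles $H(1)H(1,2)$ and $H(2)H(1,1)$ together with reversal: both expansions reduce to the single relation $2H(1,1,2)+H(1,2,1)\equiv 0$, which cannot separate the two unknowns. You must instead invoke the depth-three MHS results cited at the top of \S\ref{sec:depth3} (\cite[Theorem~3.5]{1stpart}), which give every even-weight depth-three MHS as a multiple of $B_{p-w}=0$. Second, and more seriously, your route to the indispensable fifth equation rests on the claim that $U(-3)\equiv-\frac13q_p^3-\frac7{24}B_{p-3}$ follows from the homogeneous value \eqref{equ:depth3FermatQ} for $H(\{-1\}^3)$, \eqref{equ:H-1}, and Lemma~\ref{lem:cgl}. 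It does not: those inputs (and the stuffle $H(-1)\ast H(-1,-1)$) pin down only $H(-1,-1,-1)$, which has no direct relation to $U(-3)=\sum_{k=1}^{p-1}2^k/k^3$. The value of $U(-3)\bmod p$ is the Dilcher--Skula ``cube of the Fermat quotient'' congruence, imported in \eqref{equ:U(-3)} from \cite[(4)]{DS}; equivalently, what you need is $H(1,1,-1)\equiv-\frac13q_p^3-\frac7{24}B_{p-3}$ from \cite[Corollary~2.5]{Tau1}, which is precisely the external input the paper's own proof uses. Without this, the stuffle/reversal system has rank $3$ in the four unknowns (every further stuffle I tried, e.g.\ $H(-1,-1)\ast H(1,1)$, is dependent) and your argument cannot close; with it, cited correctly, your proof goes through.
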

\begin{proof} By Theorem~\ref{thm:reduction}
\begin{align*}
H(1,-1,-1,1)\equiv&  -H(-1,-1,1)-\frac12 H(-2,-1,1)-\sum_{k=2}^{p-3}  B_k H(-(k+1),-1,1),\\
H(1,1,-1,-1)\equiv& -H(1,-1,-1)-\frac12 H(2,-1,-1)-\sum_{k=2}^{p-3}  B_k H(k+1,-1,-1).
\end{align*}
Using reversal relations and \eqref{equ:a-b-cOdd} we see that
\begin{align}
H(1,-1,-1,1)\equiv& -H(-1,-1,1)-\frac12 H(-2,-1,1) \notag\\
    &+\frac12\sum_{k=2}^{p-3}  B_k  \Big(H(k+2,1)+H(-(k+1),-2)-H(-(k+1))H(-1,1)\Big),
    \label{equ:H1-1-11midstep1}\\
H(1,1,-1,-1)\equiv& -H(1,-1,-1)-\frac12 H(2,-1,-1)  \notag \\
    & -\frac12\sum_{k=2}^{p-3}  B_k  \Big(H(2,k+1)+H(-1,-(k+2))-H(-1)H(-1,k+1)\Big).
    \label{equ:H11-1-1midstep1}
\end{align}
Note that by Theorem~\ref{thm:reduction}
\begin{align*}
-H(-1,1) \equiv&  H(1,-1)  \equiv -\sum_{k=0}^{p-3}(-1)^k  B_k H(-(k+1)), \\
H(1,1,-1) \equiv& -H(1,-1) -\frac12 H(2,-1)+\sum_{k=2}^{p-3} B_k H (-1,k+1) .
\end{align*}
We may use \eqref{equ:l2wtodd1}, and \eqref{equ:l2wtodd2}
to simplify \eqref{equ:H1-1-11midstep1} and  \eqref{equ:H11-1-1midstep1}
further. For all $k=2,\dots,p-5$ we have
\begin{alignat}{2}
    H(k+2,1)\equiv& -B_{p-3-k},\quad
  & H(-(k+1),-2)\equiv\frac12(2^{p-3-k}-1)(k+2)B_{p-3-k}, \label{equ:stilltrue1}\\
H(2,k+1)\equiv& -\frac{(k+2)B_{p-3-k}}2,  \quad
& H(-1,-(k+2))\equiv (2^{p-3-k}-1) B_{p-3-k}. \label{equ:stilltrue2}
\end{alignat}
However, one has to be very careful in applying
these formulae because the formulae might fail when $k=p-3$.
We need to compute these separately as follows:
\begin{align*}
H(p-1,1)=&\sum_{i=1}^{p-1} \frac1i\sum_{j=1}^{i-1} \frac1{j^{p-1}}
 \equiv \sum_{i=1}^{p-1} \frac{i-1}i\equiv -1\equiv-B_0,\\
H(-(p-2),-2)=&\sum_{i=1}^{p-1} \frac{(-1)^i}{i^2}\sum_{j=1}^{i-1} \frac{(-1)^j}{j^{p-2}}
 \equiv \sum_{i=1}^{p-1} \frac{(-1)^i}{i^2}\left((-1)^i\frac{1-2i}4-\frac14\right) \equiv 0,\\
H(2,p-2)\equiv & H(p-2,2)=-\sum_{i=1}^{p-1} \frac{1}{i^2}\sum_{j=1}^{i-1} \frac{1}{j^{p-2}}
 \equiv -\sum_{i=1}^{p-1} \frac{i-1}{2i}
 \equiv \frac12 \equiv 0,\\
H(-1,-(p-1))\equiv& -H(-(p-1),-1)=-\sum_{i=1}^{p-1} \frac{(-1)^i}i
    \sum_{j=1}^{i-1} \frac{(-1)^j}{j^{p-1}}
 \equiv \sum_{i=1}^{p-1} \frac{1+(-1)^i}{2i}\equiv \frac12H(-1) \equiv -q_p
\end{align*}
by \eqref{equ:H-1}. We see that only $H(-1,-(p-1))$ fails the formula in
\eqref{equ:stilltrue2} and therefore  we get
\begin{align*}
H(1,-1,-1,1)\equiv& -H(-1,-1,1)-\frac12 H(-2,-1,1)+\frac12 H(-1)H(-1,1)- \frac12 H(-1,1)^2 \\
&+\frac12\sum_{k=2}^{p-3} B_k  \left(
-B_{p-3-k}+\frac12(2^{p-3-k}-1)(k+2)B_{p-3-k}\right)
\end{align*}
and
\begin{align*}
H(1,1,-1,-1)\equiv&  H(-1,-1,1)-\frac12 H(2,-1,-1)
    +\frac12 H(-1)\Big(H(1,1,-1)+H(1,-1)+\frac12 H(2,-1)\Big) \\
    &-\frac12\sum_{k=2}^{p-3} B_k  \left(
    -\frac{(k+2)B_{p-3-k}}2+(2^{p-3-k}-1)B_{p-3-k}\right)+\frac12 q_pB_{p-3}.
\end{align*}
Now by \cite[Cororllary~2.4, Cororllary~2.5]{Tau1} we know $H(-1,1)\equiv -q_p^2$,
$H(-1,-1,1)\equiv q_p^3+\frac78 B_{p-3}$, and
$H(1,1,-1)\equiv -\frac13q_p^3-\frac7{24} B_{p-3}$.
Together with \eqref{equ:H-1} these yield
\begin{align*}
H(1,-1,-1,1)\equiv & -\frac12 B_{p-3}-\frac12 H(-2,-1,1)-\frac12 q_p^4 -\frac12A
 +\frac14\sum_{k=0}^{p-3}   (2^{p-3-k}-1)(k+2) B_k B_{p-3-k},   \\
H(1,1,-1,-1)\equiv& B_{p-3}-\frac12 H(2,-1,-1)
+\frac13q_p^4+\frac23 q_pB_{p-3}+A+\frac14G
    -\frac12\sum_{k=0}^{p-3} 2^{p-3-k}B_k B_{p-3-k}.
\end{align*}
It now follows from \eqref{equ:relationsA-J} and the substitution $k\to p-3-k$ that
\begin{align*}
H(1,-1,-1,1)\equiv & -\frac12 H(-2,-1,1)-\frac12 q_p^4
 +\frac14\sum_{k=0}^{p-3}(1-2^k)(k+1) B_k B_{p-3-k}, \\
H(1,1,-1,-1)\equiv& -\frac12 H(2,-1,-1)-\frac12 H(1,-3)+\frac23 q_pB_{p-3-k}+\frac13 q_p^4.
\end{align*}
Hence \eqref{equ:H1-1-11} and \eqref{equ:H11-1-1} quickly follow from
\eqref{equ:H1-1-2Use} and  \eqref{equ:H2-1-1=H1-3}.

Finally, \eqref{equ:H1-11-1} follows from stuffle relations applied to
$H(-1)H(1,-1,1)$ and then \eqref{equ:H-111-1} from stuffle relations applied to
$H(-1)H(1,1,-1)$. This finishes the proof of the proposition.
\end{proof}

For other depth four and weight four AMHS we have the following relations
derived from the stuffle relations and the congruences obtained above:
\begin{align*}
H(1,1,-1,1) \equiv& 2H_{\bar{2}11}+3H(-1,1,1,1)+\frac12q_pB_{p-3}, \\
H(-1,-1,1,-1) \equiv&6H_{\bar{2}11}+3H(1,-1,-1,-1)-4q_pB_{p-3}-2q_p^4, \\
H(-1,-1,-1,-1)\equiv&\phantom{ 6H_{\bar{2}11}-3H(1,-1,-1,-1)+}  \frac13q_pB_{p-3}+\frac23q_p^4.
\end{align*}
On the other hand, we can only deduce from Theorem~\ref{thm:reduction}
and Theorem~\ref{thm:reduction2} that
\begin{align*}
H(1,1,1,-1)\equiv& -H(1,1,-1)-\frac12H(2,1,-1)
   -\sum_{k=2}^{p-3} B_k H(k+1,1,-1),\\
H(-1,-1,-1,1)\equiv& -q\Big(H(-1,-1,1)-H(1,-1,1)\Big)-\frac12H(2,-1,1)  \\
   &+\sum_{k=2}^{p-3} (1-2^k)B_k H(k+1,-1,1),
\end{align*}
where, by the reduction theorems again,
\begin{align*}
H(k+1,1,-1)\equiv&  -\frac{1}{k+1} H(k+1,-1)-\frac12 H(k+2,-1)\\
 &+\sum_{j=2}^{p-k-2}  {p-k-1\choose j} \frac{B_j}{p-k-1} H(j+k+1,-1),\\
H(k+1,-1,1)\equiv&  -\frac{1}{k+1} H(-(k+1),1)-\frac12 H(-(k+2),1)\\
 &+\sum_{j=2}^{p-k-2}  {p-k-1\choose j} \frac{B_j}{p-k-1} H(-(j+k+1),1).
\end{align*}
Observe that the indices $k$ and $j$ in the above sums can be both
taken to be even numbers. Thus by Proposition~\ref{prop:wteven}
and Proposition~\ref{prop:wteven2}
\begin{align*}
H(j+k+1,-1) \equiv
    &\sum_{i=0}^{p-j-k-2}{p-j-k-1\choose i}
    \frac{2(1-2^{2p-i-j-k-2})B_i B_{2p-i-j-k-2}}{(p-j-k-1)(2p-i-j-k-2)}, \\
\equiv
    &\sum_{i=0}^{p-j-k-2}{p-j-k-1\choose i}
    \frac{2(1-2^{p-i-j-k-1})B_i B_{p-i-j-k-1}}{(j+k+1)(i+j+k+1)},
\end{align*}
\begin{align*}
H(-(j+k+1),1) \equiv
&\sum_{i=1}^{p-j-k-4} {p-2-j-k\choose i}
    \frac{ 2 (1-2^{i+1})(1-2^{p-i-j-k-2})B_{i+1} B_{p-i-j-k-2}}{(i+1)(i+j+k+2)} \\
&+ \frac{2 (1-2^{p-j-k-1})(1-2^{p-1})B_{p-j-k-1} B_{p-1}}{p-j-k-1}\\
\equiv
&\sum_{i=2}^{p-j-k-3} {p-2-j-k\choose i}
    \frac{ 2 (1-2^i)(1-2^{p-i-j-k-1})B_i B_{p-i-j-k-1}}{i(i+j+k+1)} \\
&-q_p\frac{2 (1-2^{p-j-k-1})B_{p-j-k-1}}{j+k+1} .
\end{align*}
Consequently, both $H(1,1,1,-1)$ and $H(-1,-1,-1,1)$ can be written as
a triple sum with most of the terms involving products $B_iB_jB_k B_{p-i-j-k-2}$.
It is very likely that modulo $p$ we cannot reduce
$H(1,1,1,-1)$ and $H(-1,-1,-1,1)$ to a linear combination of AMHS
of depths up to three. At least in theory one possible way to
check this hypothesis is to find six infinite sets of primes
$S_1=\{p_1^{(k)}: k\ge 1\},\dots,S_6=\{p_6^{(k)}: k\ge 1\}$
for each of the following six elements:
\begin{alignat*}{3}
 b_1(p)=J_p, &\quad b_2(p)=H(1,-3), &\quad b_3(p)=&H(1,-1,-1,-1),\\
\ b_4(p)=q_p^4, &\quad b_5(p)=q_pB_{p-3}, &\quad b_6(p)=&H(-1,1,1,1),
\end{alignat*}
such that for each choice $(p_1^{(k)},\dots,p_6^{(k)})$ we always
have $b_j(p_i^{(k)})\equiv 0\pmod{p_i^{(k)}}$ for all $i\ne j$ and
$b_j(p_j^{(k)})\not\equiv 0\pmod{p_j^{(k)}}$ for all $j=1,\dots,6$.
In practice this is extremely difficult to carry out. For example,
if $b_4(p)\equiv 0\pmod{p}$ then the prime $p$ is called a Wieferich prime.
The only known Wieferich primes are 1093 and 3511
and if any other Wieferich primes exist, they must be greater than
$6.7 \times 10^{15}$ according to \cite{DK}. It turns out that
\begin{alignat*}{2}
[J_p, H(1,-3),H(1,-1,-1,-1),H(-1,1,1,1)]\equiv& [1023, 529, 670, 952]  &\pmod{1093},\\
[J_p, H(1,-3),H(1,-1,-1,-1),H(-1,1,1,1)]\equiv& [1618, 2160, 1620, 540] &\pmod{3511}.
\end{alignat*}
In order to understand the general mod $p$ structure of AMHS
we need to consider some infinite algebras similar to the adeles
(see \cite{pmod}).

\section{Congruence modulo prime powers}\label{sec:highPower}
In this last section we shall study the congruence properties of AMHS of small weights
modulo higher powers of primes $p$. We first need some results concerning the
sums $U(\ors;p-1)$ defined by \eqref{equ:Udefn}.

\begin{prop}  \label{prop:power2}
Let $p$ be a prime $\geq 7$. Let $A$ and $B$ be defined as in \S\ref{sec:wt4}.
Write $U(-)=U(-;p-1)$. Then we have
{\allowdisplaybreaks
\begin{align}
 &U(-1)\equiv -2q_p-\frac{7}{12}p^2B_{p-3} &\pmod{p^3}, \label{equ:U(-1)}\\
 &U(-2)\equiv -q_p^2+\frac{2}{3}p q_p^3+\frac{7}{6}p B_{p-3} &\pmod{p^2},  \label{equ:U(-2)}\\
 &U(-1,1)\equiv q_p^2-\frac{2}{3}p q_p^3-\frac{1}{12}p B_{p-3}&\pmod{p^2},  \label{equ:U(-1,1)}\\
 &U(1,-1)\equiv -\frac{13}{12}p B_{p-3} &\pmod{p^2},  \label{equ:U(1,-1)}\\
 &U(-3)\equiv -\frac{1}{3}q_p^3-\frac{7}{24} B_{p-3} &\pmod{p},  \label{equ:U(-3)}\\
 &U(-2,1)\equiv \frac{1}{3}q_p^3-\frac{23}{24} B_{p-3} &\pmod{p},  \label{equ:U(-2,1)}\\
 &U(1,-2)\equiv \frac{5}{4} B_{p-3} &\pmod{p},  \label{equ:U(1,-2)}\\
 &U(2,-1)\equiv -\frac{3}{4} B_{p-3} &\pmod{p},  \label{equ:U(2,-1)}\\
 &U(-1,2)\equiv \frac{1}{3}q_p^3+\frac{25}{24} B_{p-3} &\pmod{p},  \label{equ:U(-1,2)}\\
 &U(1,1,-1)\equiv -\frac{1}{2} B_{p-3} &\pmod{p}, \label{equ:U(1,1,-1)}\\
 &U(1,-1,1)\equiv \frac{1}{2} B_{p-3} &\pmod{p},  \label{equ:U(1,-1,1)}\\
 &U(-1,1,1)\equiv -\frac{1}{3}q_p^3-\frac{7}{24} B_{p-3} &\pmod{p},  \label{equ:U(-1,1,1)}\\
 &U(-4)\equiv  H(-1,1,1,1)\equiv -H(1,1,1,-1) &\pmod{p}, \label{equ:U(-4)}\\
 &U(1,-3)  \equiv A-B+\frac54 q_p B_{p-3}  &\pmod{p} ,  \label{equ:U(1,-3)}\\
 &U(-3,1)  \equiv H(1,1,1,-1)+B-A-\frac54 q_p B_{p-3}  &\pmod{p}.  \label{equ:U(-3,1)}
\end{align}}
\end{prop}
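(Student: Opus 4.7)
The unifying strategy is to apply Lemma~\ref{lem:binomialU} (or equivalently Lemma~\ref{lem:mygeneral}) with $x=2$ and $m=p-1$ at depth $d=1,2,3,4$, combined with the $p$-adic expansion of binomial coefficients from \eqref{equ:p-1choosej}, namely $(-1)^j\binom{p-1}{j}\equiv 1-pH(1;j)+p^2H(1,1;j)\pmod{p^3}$. This converts each $U(-d;p-1)$ modulo the required precision into a linear combination of strict AMHS of weight at most $d+2$: the leading term is $H(d;p-1)+S(-1,\{1\}^{d-1};p-1)$, while the $p$- and $p^2$-correction terms produce non-strict AMHS that reduce to strict ones by separating coinciding indices. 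All the resulting constituent AMHS have been (or can be) evaluated using Theorem~\ref{thm:sun}, Corollary~\ref{cor:Hdepth1}, Theorem~\ref{thm:depth2}, Proposition~\ref{prop:-11r}, the weight-four results of Section~\ref{sec:wt4}, and the key congruence \eqref{equ:H-1}.

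Concretely, for the depth-one sums, applying the above at $d=1$ with $p^3$-precision yields
\[
U(-1)\equiv H(1)+H(-1)-p\,[\,H(1,-1)+H(-2)\,]+p^2\,[\,H(1,1,-1)+H(2,-1)+H(1,-2)+H(-3)\,]\pmod{p^3},
\]
where the diagonal terms ($H(-2)$, $H(2,-1)$, $H(1,-2)$, $H(-3)$) come from collapsing coinciding indices in the underlying non-strict sums $S(1,-1;p-1)$ and $S(1,1,-1;p-1)$. Substituting the known values of these AMHS modulo the appropriate powers of $p$ produces \eqref{equ:U(-1)}. The cases $U(-2)$ mod $p^2$ and $U(-3),U(-4)$ mod $p$ follow the same template with one less power of $p$ to track; in particular $U(-4)$ reduces via Proposition~\ref{prop:-11r} and the reversal relations to $-H(1,1,1,-1)\equiv H(-1,1,1,1)$, giving \eqref{equ:U(-4)}.

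For the mixed-sign depth-two sums \eqref{equ:U(-1,1)}--\eqref{equ:U(-1,2)} and the depth-three sums \eqref{equ:U(1,1,-1)}--\eqref{equ:U(-1,1,1)}, the same mechanism applies after placing the factor $2^{k_i}$ at a non-extremal position: expand $2^{k_i}=\sum_j\binom{k_i}{j}$ and use the hockey-stick identity, or equivalently exploit stuffle-type product rules among $U$-sums together with the depth-one values already obtained. All target AMHS have weight $\le 3$, so their mod-$p$ evaluations are furnished by Theorem~\ref{thm:depth2} and the depth-three relations of Section~\ref{sec:depth3}. Finally, for the weight-four pair $U(1,-3)$ and $U(-3,1)$, I would apply Theorem~\ref{thm:reduction} and Theorem~\ref{thm:reduction2} respectively to reduce to depth-one $U$-sums plus double Bernoulli sums, then collapse the latter via the identities \eqref{equ:relationsA-J} and Proposition~\ref{prop:wt4depth2} into the claimed forms \eqref{equ:U(1,-3)} and \eqref{equ:U(-3,1)}.

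The main obstacle is the $p^3$-precision needed for $U(-1)$: several constituent AMHS (notably $H(1,-1)$ and $H(-2)$) must be known one power of $p$ beyond the mod-$p$ statements used elsewhere in the paper, so one needs the sharper forms furnished by Proposition~\ref{prop:wteven}, Corollary~\ref{cor:Hdepth1}, and the Kummer-type refinement behind \eqref{equ:H-1}. A secondary difficulty is the combinatorial collapse for $U(1,-3)$ and $U(-3,1)$: the double Bernoulli sums produced by the reduction formulae must be simplified to the stated compact closed forms by a careful, and slightly intricate, application of the identities \eqref{equ:relationsA-J}.
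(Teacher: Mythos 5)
Your overall strategy---expanding the $U$-sums via Lemma~\ref{lem:mygeneral}/Lemma~\ref{lem:binomialU} together with \eqref{equ:p-1choosej} and then feeding in known AMHS values and stuffle relations---is indeed the paper's strategy, and your treatment of \eqref{equ:U(-4)} via Proposition~\ref{prop:-11r} is correct. But your explicit expansion of $U(-1)$ is wrong, and the error survives to the final answer. In \eqref{equ:p-1choosej} the coefficient of $p^2$ is the \emph{strict} sum $H(1,1;j)$, so the $p^2$-correction arising from $\sum_j\frac{(-1)^j}{j}\bigl(1-pH(1;j)+p^2H(1,1;j)\bigr)$ is $\sum_j\frac{(-1)^j}{j}H(1,1;j)=H(1,1,-1)+H(1,-2)$: only the outermost index can coincide with $i_2$, while $i_1<i_2$ is already strict. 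It is \emph{not} the fully non-strict sum $S(1,1,-1)=H(1,1,-1)+H(2,-1)+H(1,-2)+H(-3)$ that you wrote. Since $H(2,-1)+H(-3)\equiv\frac14B_{p-3}-\frac12B_{p-3}\not\equiv0\pmod p$, your formula evaluates to $-2q_p-\frac56p^2B_{p-3}$ rather than the $-2q_p-\frac7{12}p^2B_{p-3}$ of \eqref{equ:U(-1)}. Even after this correction there is a circularity: your expansion needs $H(1,-1)\pmod{p^2}$, which the paper only obtains \emph{from} \eqref{equ:U(-1)} in Proposition~\ref{prop:Hdepth2} (reversal plus stuffle degenerate to the identity $H(-2)\equiv H(1)H(-1)+p(H(-2,1)+H(-1,2))$ and determine nothing new). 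The paper's derivation \eqref{equ:taulastStep} avoids both problems by rewriting $\binom{p-1}{j}=\binom{p}{j}-\binom{p-1}{j-1}$, which produces the main term $-2q_p$ exactly and shifts the corrections up to weight three, where only mod-$p$ data is needed; this yields \eqref{equ:taux=-1}, $U(-1)-H(1)\equiv-pH(-2)+p^2H(1,-2)-2q_p\pmod{p^3}$.

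Two further gaps. First, stuffle relations only give the \emph{sums} $U(-2,1)+U(1,-2)$, $U(2,-1)+U(-1,2)+\cdots$; to separate them one needs an independent linear relation, which the paper obtains by substituting $x=1/2$ into \eqref{equ:taulastStep} and \eqref{equ:mygeneral(2)} and converting the resulting $V$-sums back to $U$-sums via Lemma~\ref{lem:Ureverse}. Your ``hockey-stick'' alternative is not developed and nothing you state supplies this missing relation. Second, your route to \eqref{equ:U(1,-3)} and \eqref{equ:U(-3,1)} does not close: applying the power-sum expansion underlying Theorem~\ref{thm:reduction} to the inner index of $U(1,-3)$ yields $\sum_k\binom{p-1}{k}\frac{B_k}{p-1}\,U(-(k+3))$, and the depth-one sums $U(-m)$ for general $m$ are \emph{not} single Bernoulli numbers (they equal $(-1)^{m-1}H(\{1\}^{m-1},-1)$ by Proposition~\ref{prop:-11r}), so no double Bernoulli sum amenable to \eqref{equ:relationsA-J} appears; and Theorem~\ref{thm:reduction2} cannot be applied to $U(-3,1)$ at all, since its innermost index carries the weight $2^k$ rather than $(-1)^k$ and is therefore not governed by the Euler-polynomial formula \eqref{equ:altsumEuler}. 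The paper instead obtains \eqref{equ:U(1,-3)} by comparing two independent mod-$p^2$ expansions of $S(-1,1,1)$, namely \eqref{equ:tauS(-1,1,1)} (Lemma~\ref{lem:binomialU} with $d=4$, $x=2$) and \eqref{equ:myS(-1,1,1)}, and then deduces \eqref{equ:U(-3,1)} from the stuffle $U(-3,1)=H(1)U(-3)-U(1,-3)-U(-4)$.
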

\begin{proof} Throughout the proof we write $H(-)=H(-;p-1)$ and similarly for $U$ and $V$.
We will prove the congruences in the following
order: \eqref{equ:U(-3)}, \eqref{equ:U(-1)}, \eqref{equ:U(-2,1)}, \eqref{equ:U(1,-2)},
\eqref{equ:U(-2)},  \eqref{equ:U(-1,1)},
\eqref{equ:U(1,-1)}, \eqref{equ:U(1,1,-1)}, \eqref{equ:U(2,-1)}, \eqref{equ:U(-1,2)},
and \eqref{equ:U(1,-1,1)} to \eqref{equ:U(-3,1)}.

First, \eqref{equ:U(-3)} follows from \cite[(4)]{DS} and
$H(-3)\equiv -\frac12 B_{p-3}$ (take $a=3$ in \eqref{equ:Hdepth1}).
Taking $d=1$ and $m=p-1$ in \eqref{equ:mygeneral(1)} we have
\begin{align}
\sum_{k=1}^{p-1} \frac{(1-x)^k-1}{k}
=& \sum_{j=1}^{p-1} \frac{(-x)^j}{j} {p-1\choose j}
= \sum_{j=1}^{p-1} \frac{(-x)^j}{j}\left({p\choose j}-{p-1\choose j-1}\right) \notag \\
=& -p\sum_{j=1}^{p-1} \frac{x^j}{j^2}(-1)^{j-1}{p-1\choose j-1}
-\frac{1}{p}\sum_{j=1}^{p-1} (-x)^j {p\choose j} \notag \\
=& -p\sum_{j=1}^{p-1} \frac{x^j}{j^2}(-1)^{j-1}{p-1\choose j-1}
+\frac{(x-1)^p-x^p+1}{p}. \label{equ:taulastStep}
\end{align}
Since
$$(-1)^{j-1}{p-1\choose j-1}\equiv 1-pH(1;j-1)\pmod{p^2}$$
letting $x=-1$ in \eqref{equ:taulastStep} we get
\begin{equation}\label{equ:taux=-1}
U(-1)-H(1)\equiv  -pH(-2)+p^2H(1,-2)-2q_p \pmod{p^3}.
\end{equation}
So it is not hard to see that \eqref{equ:U(-1)} can be obtained from the following:
$H(1,-2)\equiv \frac14 B_{p-3}\pmod{p}$
by \cite[Cororllary~2.4]{Tau1}, $H(-2)\equiv \frac12 p B_{p-3} \pmod{p^2}$ by \eqref{equ:Hdepth1},
and the well-known fact $H(1)\equiv -\frac13 p^2B_{p-3}\pmod{p^3}$ (see, for e.g., \eqref{equ:SunThm5.1}).
Letting $x=1/2$ in \eqref{equ:taulastStep} we have that
\begin{equation*}
 V(-1)-H(1)\equiv -pV(-2)+p^2V(1,-2)+\frac{q_p}{2^{p-1}} \pmod{p^3}.
\end{equation*}
Multiplied by $-2^p$ this yields by Lemma~\ref{lem:Ureverse}
\begin{equation}\label{equ:taux=1/2}
 U(-1)+2^p H(1)\equiv p^2U(-3)+p^2U(-2,1)-2q_p \pmod{p^3}.
\end{equation}
Thus \eqref{equ:U(-2,1)} follows from \eqref{equ:U(-1)}, \eqref{equ:U(-3)} and
$H(1)\equiv -\frac13 p^2B_{p-3}\pmod{p^3}.$ Further, by the stuffle relation
$$U(1,-2)\equiv H(1)U(-2)-U(-2,1)-U(-3)\equiv \frac{5}{4} B_{p-3} \pmod{p}$$
we get \eqref{equ:U(1,-2)}.
Now letting $x=2$ in \eqref{equ:taulastStep} we see that
\begin{equation}\label{equ:taux=2}
 H(-1)-H(1)\equiv -pU(-2)+p^2U(1,-2)-2q_p \pmod{p^3}.
\end{equation}
Hence \eqref{equ:U(-2)} follows from \eqref{equ:U(1,-2)} and \eqref{equ:H-1}.
Then taking $d=2$ and $x=-1$ in \eqref{equ:mygeneral(2)} we get
\begin{align}
U(-1,1)+U(-2)\equiv& H(1,1)+H(2)+ H(-2)-pH(1,-2)- pH(-3)\notag\\
\equiv&  \frac{13}{12} p B_{p-3}  \pmod{p^2}. \label{equ:myIIx=-1}
\end{align}
Thus $U(1,-1)=U(-1)H(1)-U(-1,1)-U(-2)\equiv -\frac{13}{12} p B_{p-3}  \pmod{p^2}$ which is
\eqref{equ:U(-1,1)}. Then \eqref{equ:U(1,-1)} follows from the stuffle relation
$U(-1,1)=H(1)U(-1)-U(1,-1)-U(-2)$.
Moreover, taking $d=1$ and $x=2$ in \eqref{equ:mygeneral(2)} we get
\begin{equation*}
     H(-1)-H(1)\equiv U(-1)-p\Big(U(-2)+U(1,-1)\Big)+p^2\Big(U(1,-2)+U(1,1,-1)\Big). \pmod{p^3}
\end{equation*}
This implies \eqref{equ:U(1,1,-1)} because of
\eqref{equ:U(-1)}, \eqref{equ:U(-2)},  \eqref{equ:U(-1,1)} and \eqref{equ:U(1,-2)}.

Next, taking $d=3$ and $x=-1$ in \eqref{equ:mygeneral(2)} we get
\begin{align}
U(-1,1,1)+U(-1,2)+U(-2,1)\equiv & H(-3)+S(1,1,1)-U(-3)  \notag\\
\equiv &\frac{1}{3}q_p^3-\frac{5}{24} B_{p-3} \pmod{p}\label{equ:myIIIx=-1}
\end{align}
since $S(1,1,1)=H(1,1,1)+H(1)H(2)\equiv 0 \pmod{p}$.
Taking $d=3$ and $x=1/2$ in \eqref{equ:mygeneral(2)} we get
\begin{equation*}
 V(-1,1,1)+V(-2,1)+V(-1,2)\equiv 0\pmod{p}
\end{equation*}
which implies by Lemma \ref{lem:Ureverse}
\begin{equation}\label{equ:myIIIx=1/2}
 U(1,1,-1)+U(1,-2)+U(2,-1)\equiv 0\pmod{p}.
\end{equation}
Thus \eqref{equ:U(2,-1)} follows from  \eqref{equ:U(1,-2)} and  \eqref{equ:U(1,1,-1)} immediately.
Then \eqref{equ:U(-1,2)} follows easily from the stuffle relation of $H(2)U(-1)$.
Also, \eqref{equ:U(1,-1,1)} and \eqref{equ:U(-1,1,1)} follow from the stuffle relations:
\begin{align*}
 H(1)U(1,-1)=&U(2,-1)+U(1,-2)+ 2U(1,1,-1)+ U(1,-1,1) \equiv 0 \pmod{p},\\
 H(1)U(-1,1)=&U(-2,1)+U(-1,2)+ 2U(-1,1,1)+ U(1,-1,1) \equiv 0 \pmod{p}.
\end{align*}
We now turn to the last three congruences of weight four.
By \cite[Theorem~2.3]{Tau1} we know \eqref{equ:U(-4)} holds.
Hence taking $d=4$, $m=p-1$ and $x=2$ in Lemma~\ref{lem:binomialU}
and using \eqref{equ:p-1choosej} we get
\begin{multline}\label{equ:tauS(-1,1,1)}
S(-1,1,1)+ p\Big(U(-4)+H(-1,2,1)+H(-2,1,1)+H(-3,1)-H(1)S(-1,1,1)\Big)\\
\equiv U(-3)-H(1)\pmod{p^2}.
\end{multline}
On the other hand, taking $d=2$ and $x=2$ in \eqref{equ:mygeneral(2)} we get
\begin{equation}\label{equ:myS(-1,1,1)}
S(-1,1,1)-S(1,1,1)\equiv U(-3)-pU(-4)-pU(1,-3) \pmod{p^2}.
\end{equation}
Comparing \eqref{equ:tauS(-1,1,1)} and \eqref{equ:myS(-1,1,1)}, using
$S(1,1,1)=H(1,1,1)+H(1)H(2)\equiv 0 \pmod{p^2}$ by \cite[Theorem~2.13]{1stpart}, and
$H(-1,2,1)+H(-2,1,1)\equiv\frac54 q_p B_{p-3} \pmod{p}$ by Proposition~\ref{prop:allwt4}, we
can deduce \eqref{equ:U(1,-3)}.
Finally, \eqref{equ:U(-3,1)} follows from \eqref{equ:U(-4)} and the stuffle relation
$U(-3,1)=H(1)U(-3)-U(1,-3)-U(-4)$.
This finishes the proof of the proposition.
\end{proof}

\begin{lem} \label{lem:reversalModpsq}
Let $a,b\in \ZZ^*$ and write $H(-)=H(-;p-1)$.  Then
\begin{equation*}
H(a,b)\equiv (-1)^{a+b}\sign(ab)\Big(H(b,a) + p|b| H(\sign(b)+b,a)+p |a| H(b,\sign(a)+a) \Big)
 \pmod{p^2}.
\end{equation*}
\end{lem}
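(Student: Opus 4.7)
The plan is to derive the reversal relation by the standard substitution $j \mapsto p-j$ in the summation variables, keeping track of corrections modulo $p^2$.

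First I would write out the definition,
\[
H(a,b;p-1) = \sum_{1\le j < k \le p-1} \frac{\sign(a)^{j}\sign(b)^{k}}{j^{|a|}k^{|b|}},
\]
and substitute $j = p - j'$, $k = p - k'$. The inequality $j < k$ becomes $k' < j'$, so after renaming we obtain a sum over $1 \le k' < j' \le p-1$. Because $p$ is odd, $\sign(s)^{p-i} = \sign(s)\sign(s)^{i}$, which produces an overall factor $\sign(ab)$ coming from the two signs.

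Next I would apply the binomial expansion modulo $p^2$,
\[
\frac{1}{(p-i)^{|s|}} \equiv \frac{(-1)^{|s|}}{i^{|s|}}\left(1 + \frac{|s|\,p}{i}\right) \pmod{p^2},
\]
to both factors $(p-j')^{-|a|}$ and $(p-k')^{-|b|}$, and drop the $O(p^2)$ cross term. This yields three double sums: a main term with exponents $(|a|,|b|)$, a term of weight $|a|{+}1$ in $j'$ carrying a factor $|a|p$, and a term of weight $|b|{+}1$ in $k'$ carrying a factor $|b|p$. Collecting the $(-1)^{|a|+|b|}$ with the two sign factors and checking case-by-case that $\sign(a)\sign(b)(-1)^{|a|+|b|} = (-1)^{a+b}\sign(ab)$ gives the common prefactor asserted in the lemma.

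Finally I would identify each of the three double sums with an AMHS. The inner index $k'$ (which plays the role of the \emph{smaller} summation index after reversal) pairs with the letter derived from $b$, while the outer index $j'$ pairs with the letter from $a$; thus the leading term is $H(b,a;p-1)$, and the two correction terms are $H(b,\,a+\sign(a);p-1)$ and $H(b+\sign(b),\,a;p-1)$, where the bumped exponent has absolute value $|a|+1$ or $|b|+1$ while keeping the sign intact, since $\sign(s)(|s|+1) = s+\sign(s)$. Assembling these pieces with the prefactor yields the stated congruence.

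The only real obstacle is bookkeeping: one has to verify carefully that the substitution $j\mapsto p-j$ preserves $p$-integrality of each summand (no denominators divisible by $p$ appear since $1 \le j,k \le p-1$), and that the four sign cases for $(a,b)$ all collapse to the single prefactor $(-1)^{a+b}\sign(ab)$. There is no deeper combinatorial difficulty — the result is essentially a Taylor expansion of the reversal symmetry already used modulo $p$ in \eqref{equ:reversal}, now pushed one order further.
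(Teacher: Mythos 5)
Your proposal is correct and is essentially identical to the paper's proof: the authors also reindex by $m\mapsto p-m$, $n\mapsto p-n$, pull out the factor $(-1)^{a+b}\sign(ab)$, and expand $(1-p/m)^{-|a|}(1-p/n)^{-|b|}\equiv 1+|a|p/m+|b|p/n \pmod{p^2}$ to identify the two correction terms $H(b,a+\sign(a))$ and $H(b+\sign(b),a)$. The only difference is that the paper leaves the final expansion implicit ("the lemma follows easily"), whereas you spell it out.
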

\begin{proof} By definition
\begin{align*}
H(a,b)=&\sum_{1\le m<n< p}  \frac{\sign(a)^m \sign(b)^n}{m^{|a|} n^{|b|} }
    =\sum_{1\le n<m< p}  \frac{\sign(a)^{p-m} \sign(b)^{p-n} }{(p-m)^{|a|} (p-n)^{|b|} } \\
=&(-1)^{a+b} \sign(ab) \sum_{1\le n<m<p}
    \frac{\sign(b)^n \sign(a)^m }{ n^{|b|} m^{|a|}} \frac{1} { (1-p/n)^{|b|} (1-p/m)^{|a|} }.
\end{align*}
The lemma follows easily.
\end{proof}

\begin{prop} \label{prop:Hdepth2}
Let $A$ and $B$ be defined as in \S\ref{sec:wt4}. For all prime $p\ge 7$
write $H(-)=H(-;p-1)$ and set (see Theorem~\ref{thm:sun}(b))
$$\XX=\XX_p(3):=\frac{B_{p-3}}{p-3}
    - \frac{B_{2p-4}}{4p-8}.$$ Then we have
\begin{alignat}{2}
\label{equ:H(-1,-1)}
H(-1,-1)\equiv& 2q_p^2+p\left(2\XX-2q_p^3\right)
+p^2\left(\frac{11}{6}q_p^4+\frac12 q_pB_{p-3} \right) &\pmod{p^3}\, \\
    \equiv& 2q_p^2-2pq_p^3-\frac13 pB_{p-3} &\pmod{p^2}, \label{equ:H(-1,-1)(2)}\\
H(1,-1)\equiv & q_p^2 - pq_p^3 - \frac{13}{24}p B_{p-3} &\pmod{p^2}, \label{equ:H(1,-1)}\\
H(-1,1)\equiv & -q_p^2 +pq_p^3 +\frac{1}{24}p B_{p-3} &\pmod{p^2}, \label{equ:H(-1,1)}\\
H(-3) \equiv &  3\XX &\pmod{p^2}, \label{equ:H(-3)}\\
H(-2,1)\equiv& H(1,-2)\equiv
-\frac32 \XX &\pmod{p^2}, \label{equ:H(-2,1)}\\
H(2,-1)\equiv&  -\frac32 \XX-\frac76p q_p B_{p-3}+p(B-A) &\pmod{p^2}, \label{equ:H(2,-1)}\\
H(-1,2)\equiv& -\frac32 \XX-\frac16p q_p B_{p-3}+p(A-B) &\pmod{p^2}. \label{equ:H(1,-2)}
\end{alignat}
\end{prop}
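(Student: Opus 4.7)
The plan is to reduce each of the seven congruences to previously established quantities by combining stuffle and reversal relations with (\ref{equ:H-1}), Theorem~\ref{thm:sun}, Corollary~\ref{cor:Hdepth1}, Proposition~\ref{prop:power2}, Lemma~\ref{lem:reversalModpsq}, and Kummer's congruence $\XX \equiv -\tfrac{1}{6}B_{p-3}\pmod{p}$ (which follows from the definition $\XX = \tfrac{B_{p-3}}{p-3} - \tfrac{B_{2p-4}}{2(2p-4)}$ together with $\tfrac{B_{2p-4}}{2p-4} \equiv \tfrac{B_{p-3}}{p-3}\pmod{p}$).

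I would first prove \eqref{equ:H(-3)}: splitting by parity gives $H(-3) = -H(3) + \tfrac{1}{4}H(3;(p-1)/2)$, and Theorem~\ref{thm:sun}(a) with $k=3$ yields $H(3) \equiv 0\pmod{p^2}$ while Theorem~\ref{thm:sun}(b) with $k=3$ yields $H(3;(p-1)/2) \equiv 2(2^3-2)\XX = 12\XX\pmod{p^2}$. For \eqref{equ:H(-1,-1)}, expand the stuffle identity $H(-1)^2 = 2H(-1,-1) + H(2)$ using \eqref{equ:H-1} and $H(2) \equiv -4p\XX\pmod{p^3}$ from Theorem~\ref{thm:sun}(a); truncating modulo $p^2$ and substituting $2p\XX \equiv -\tfrac{1}{3}pB_{p-3}\pmod{p^2}$ then produces \eqref{equ:H(-1,-1)(2)}.

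Next, for \eqref{equ:H(-2,1)}, the stuffle $H(-2)H(1) = H(-2,1) + H(1,-2) + H(-3)$ gives $H(-2,1)+H(1,-2) \equiv -3\XX\pmod{p^2}$, since $H(1) \equiv 0\pmod{p^2}$. Lemma~\ref{lem:reversalModpsq} supplies $H(-2,1) \equiv H(1,-2) + pH(2,-2) + 2pH(1,-3)\pmod{p^2}$, and the correction vanishes mod $p^2$ because $H(2,-2) \equiv -2(B-A)\pmod{p}$ and $H(1,-3) \equiv B-A\pmod{p}$ by Proposition~\ref{prop:wt4depth2}; hence $H(-2,1) \equiv H(1,-2) \equiv -\tfrac{3}{2}\XX\pmod{p^2}$. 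The pair \eqref{equ:H(2,-1)},\,\eqref{equ:H(1,-2)} is handled analogously: now the product $H(-1)H(2) \equiv 8pq_p\XX \equiv -\tfrac{4}{3}pq_pB_{p-3}\pmod{p^2}$ is nontrivial, and Lemma~\ref{lem:reversalModpsq} combined with the mod $p$ values $H(3,-1) \equiv \tfrac{1}{2}q_pB_{p-3}$ and $H(2,-2) \equiv 2(A-B)$ from Proposition~\ref{prop:allwt4} gives $H(-1,2) - H(2,-1) \equiv pq_pB_{p-3} + 2p(A-B)\pmod{p^2}$. Solving the resulting $2\times 2$ linear system separates $H(-1,2)$ and $H(2,-1)$.

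The most delicate step is the pair \eqref{equ:H(1,-1)}, \eqref{equ:H(-1,1)}, because the stuffle $H(1)H(-1) = H(-1,1) + H(1,-1) + H(-2)$ vanishes modulo $p^2$ and Lemma~\ref{lem:reversalModpsq} contributes no independent information. My strategy is to invoke Lemma~\ref{lem:mygeneral} with $d=2$, $m=p-1$, $x=2$: the left-hand side becomes $S(-1,1) - S(1,1) = H(-1,1) + H(-2) - H(1,1) - H(2)$, and the right-hand side, after identifying the inner sums and truncating, reduces modulo $p^2$ to $U(-2) - pU(-3) - pU(1,-2)$. Substituting the explicit expressions for $U(-2)$, $U(-3)$, $U(1,-2)$ from Proposition~\ref{prop:power2} together with $H(-2) \equiv \tfrac{1}{2}pB_{p-3}$, $H(1,1) \equiv 2p\XX\pmod{p^2}$ (via $H(1)^2 = 2H(1,1) + H(2)$), and $H(2) \equiv -4p\XX\pmod{p^2}$ pins down $H(-1,1)\pmod{p^2}$, after which $H(1,-1) \equiv -H(-1,1) - H(-2)\pmod{p^2}$ follows from the stuffle. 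The principal obstacle is the careful bookkeeping of the several $pB_{p-3}$ contributions whose cancellation determines the final coefficients; in particular this step critically depends on having all the $U$-values in Proposition~\ref{prop:power2} available with mod $p^2$ precision.
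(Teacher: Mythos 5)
Your proposal is correct, and for five of the seven congruences it follows the paper's own proof essentially verbatim: \eqref{equ:H(-1,-1)} via the stuffle $H(-1)^2=2H(-1,-1)+H(2)$ with \eqref{equ:H-1} and $H(2)\equiv -4p\XX$; \eqref{equ:H(-3)} via the parity splitting $H(-3)=\tfrac14H(3;(p-1)/2)-H(3)$ and Theorem~\ref{thm:sun}; and the pairs \eqref{equ:H(-2,1)} and \eqref{equ:H(2,-1)}--\eqref{equ:H(1,-2)} via the stuffle product with $H(-3)$ combined with Lemma~\ref{lem:reversalModpsq} and the mod $p$ values from Propositions~\ref{prop:wt4depth2} and \ref{prop:allwt4} (your correction terms $2pH(3,-1)+pH(2,-2)\equiv pq_pB_{p-3}+2p(A-B)$ agree with the paper's equivalent computation from the other side). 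The one genuinely different step is \eqref{equ:H(1,-1)}--\eqref{equ:H(-1,1)}: the paper specializes \eqref{equ:mygeneral(2)} at $d=1$, $x=-1$, obtaining $U(-1)-H(1)\equiv H(-1)-p\big(H(-2)+H(1,-1)\big)\pmod{p^2}$, and solves for $H(1,-1)$ first using the mod $p^3$ value \eqref{equ:U(-1)}; you instead specialize at $d=2$, $x=2$, getting $H(-1,1)+H(-2)-H(1,1)-H(2)\equiv U(-2)-p\big(U(1,-2)+U(-3)\big)\pmod{p^2}$ and solve for $H(-1,1)$ first. Your bookkeeping checks out: the right side evaluates to $-q_p^2+pq_p^3+\tfrac{5}{24}pB_{p-3}$ and the known terms on the left contribute $\tfrac16 pB_{p-3}$, giving exactly \eqref{equ:H(-1,1)}, after which the stuffle recovers \eqref{equ:H(1,-1)}. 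Both routes rest on Proposition~\ref{prop:power2} and neither is circular (the needed $U$-values are derived there independently of this proposition); the trade-off is that the paper needs a single $U$-value to mod $p^3$ precision, whereas you need three $U$-values but only to mod $p^2$ (resp.\ mod $p$) precision.
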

\begin{proof} First, \eqref{equ:H(-1,-1)} follows readily from
the shuffle relation $H(-1)^2=2H(-1,-1)+H(2)$, the congruence \eqref{equ:H-1}, and
$H(2)\equiv -4p \XX \pmod{p^3}$ by \eqref{equ:SunThm5.1}.
This clearly implies \eqref{equ:H(-1,-1)(2)} since by Kummer congruence
$X\equiv -B_{p-3}/6 \pmod{p}$.

Next, taking $d=1$, and $x=-1$ in \eqref{equ:mygeneral(2)} we get
\begin{equation*}
     U(-1)-H(1)\equiv H(-1)-p\Big(H(-2)+H(1,-1)\Big) \pmod{p^2}.
\end{equation*}
Combining this with \eqref{equ:U(-1)} and using \eqref{equ:H-1} we get
\begin{equation*}
  H(1,-1)\equiv q_p^2 - \frac 23 pq_p^3 - \frac{1}{4}p B_{p-3}+pH(1,1,-1)\equiv
  q_p^2 - pq_p^3 - \frac{13}{24}p B_{p-3} \pmod{p^2}
\end{equation*}
which is \eqref{equ:H(1,-1)}.  Then \eqref{equ:H(-1,1)} is deduced from the stuffle relation
\begin{equation*}
  H(-1,1)\equiv H(-1)H(1)- H(1,-1)-H(-2) \equiv
  -q_p^2 +pq_p^3 +\frac{1}{24}p B_{p-3} \pmod{p^2}.
\end{equation*}

Turning to weight three we get by \cite[Theorem~2.1]{Tau1}
\begin{equation*}
     H(-3)\equiv  \frac14 H(3;(p-1)/2) \pmod{p^2}.
\end{equation*}
Hence \eqref{equ:H(-3)} follows from \eqref{equ:SunThm5.2}.
Now by Lemma \ref{lem:reversalModpsq} we have the reversal relation
\begin{equation}\label{equ:H(-2,1)rev}
H(-2,1)\equiv H(1,-2)+p H(2,-2)+2p H(1,-3)\equiv H(1,-2) \pmod{p^2}
\end{equation}
by Proposition~\ref{prop:wt4depth2}. On the other hand, by stuffle relation
\begin{equation*}
H(-2,1)+H(1,-2)=H(1)H(-2)-H(-3)\equiv -3\XX\pmod{p^2}.
\end{equation*}
Together with \eqref{equ:H(-2,1)rev} this clearly yields
\eqref{equ:H(-2,1)}.

Finally, by stuffle relation and \eqref{equ:H(-3)}
\begin{equation}\label{equ:H(2-1)stuff}
H(2,-1)+H(-1,2)=H(2)H(-1)-H(-3)
 \equiv -\frac43p q_p B_{p-3} -3\XX \pmod{p^2}.
\end{equation}
By Lemma \ref{lem:reversalModpsq} we have the reversal relation
\begin{equation*}
H(2,-1)\equiv H(-1,2)+p H(-2,2)+2p H(-1,3)\equiv H(1,-2)+2p(B-A)-p q_p B_{p-3} \pmod{p^2}
\end{equation*}
by Proposition~\ref{prop:wt4depth2}.
Combining with \eqref{equ:H(2-1)stuff} this implies
\eqref{equ:H(2,-1)} and \eqref{equ:H(1,-2)}.

We have finished the proof of the proposition.
\end{proof}

\begin{cor} For every prime $p\ge 7$ we have
\begin{equation} \label{equ:U(-1)2}
U(-1;p-1) \equiv -2q_p+ \frac72 \XX p^2+\frac12 p^3  H(-3,1;p-1)  \pmod{p^4}.
\end{equation}
\end{cor}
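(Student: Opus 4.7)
The plan is to refine the mod $p^3$ derivation of \eqref{equ:U(-1)} in the proof of Proposition~\ref{prop:power2} by keeping one additional order of $p$ throughout. Setting $x=-1$ in identity \eqref{equ:taulastStep} and expanding the binomial coefficient via
$$(-1)^{j-1}\binom{p-1}{j-1} \equiv 1 - pH(1;j-1) + p^2 H(1,1;j-1) - p^3 H(1,1,1;j-1) \pmod{p^4},$$
then recognizing the resulting sums $\sum_{j=1}^{p-1}(-1)^{j-1}H(\{1\}^r;j-1)/j^2$ as $-H(\{1\}^r,-2;p-1)$ for $r=0,1,2$, one obtains
$$U(-1) \equiv H(1) - pH(-2) + p^2 H(1,-2) - p^3 H(1,1,-2) - 2q_p \pmod{p^4},$$
the $p^4 H(1,1,1,-2)$ contribution being absorbed in the modulus.

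Three of the four sums on the right can be evaluated from results already in hand. First, $H(1,-2)\equiv -\tfrac{3}{2}\XX\pmod{p^2}$ is exactly \eqref{equ:H(-2,1)} of Proposition~\ref{prop:Hdepth2}. Second, separating $H(-2;p-1)$ into even and odd indices gives the identity $H(-2) = -H(2) + \tfrac{1}{2}H(2;(p-1)/2)$, and the mod $p^3$ formulas $H(2)\equiv -4\XX p$ and $H(2;(p-1)/2)\equiv -14\XX p$ of Theorem~\ref{thm:sun} yield $H(-2)\equiv -3\XX p\pmod{p^3}$. Third, the stuffle relation
$$H(1)H(1,-2) = 2H(1,1,-2) + H(1,-2,1) + H(2,-2) + H(1,-3)$$
has left-hand side $\equiv 0\pmod{p}$ because $H(1)\equiv 0\pmod{p^2}$; Proposition~\ref{prop:allwt4} supplies $H(1,-2,1)\equiv 0$, $H(2,-2)\equiv 4H_{\bar{2}11}$ and $H(1,-3)\equiv -2H_{\bar{2}11}\pmod{p}$, and the reversal relation gives $H(-3,1)\equiv -H(1,-3)\equiv 2H_{\bar{2}11}$, so solving yields $H(1,1,-2)\equiv -H_{\bar{2}11}\equiv -\tfrac{1}{2}H(-3,1)\pmod{p}$.

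The main obstacle is sharpening the classical $H(1)\equiv -\tfrac{1}{3}p^2 B_{p-3}\pmod{p^3}$ to a mod $p^4$ statement in terms of $\XX$. For this I pair $k$ with $p-k$ and expand geometrically in $p/k$:
$$H(1) = \sum_{k=1}^{(p-1)/2}\left(\frac{1}{k}+\frac{1}{p-k}\right) = -\sum_{n\ge 1} p^n\, H(n+1;(p-1)/2).$$
Only the $n=1$ and $n=2$ terms survive modulo $p^4$, since Theorem~\ref{thm:sun}(b) forces $H(4;(p-1)/2)\equiv 0\pmod{p}$, making the $n=3$ term of order $p^4$. Inserting the Sun congruences $H(2;(p-1)/2)\equiv -14\XX p\pmod{p^3}$ and $H(3;(p-1)/2)\equiv 12\XX\pmod{p^2}$ then gives $H(1)\equiv 14\XX p^2 - 12\XX p^2 = 2\XX p^2\pmod{p^4}$. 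Assembling the four pieces produces
$$U(-1) \equiv 2\XX p^2 + 3\XX p^2 - \tfrac{3}{2}\XX p^2 + \tfrac{1}{2}p^3 H(-3,1) - 2q_p = -2q_p + \tfrac{7}{2}\XX p^2 + \tfrac{1}{2}p^3 H(-3,1) \pmod{p^4},$$
as required.
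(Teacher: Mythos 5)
Your proof is correct and follows essentially the same route as the paper's: the identity \eqref{equ:taulastStep} at $x=-1$ pushed one order further in $p$, combined with $H(1,-2)\equiv-\tfrac32\XX$, $H(-2)=\tfrac12H(2;(p-1)/2)-H(2)\equiv-3p\XX$, and $H(1,1,-2)\equiv-\tfrac12H(-3,1)\pmod p$. The only differences are cosmetic: you obtain $H(1,1,-2)\equiv-\tfrac12H(-3,1)$ by a direct stuffle computation rather than quoting the congruence displayed just before Proposition~\ref{prop:allwt4}, and you derive $H(1)\equiv 2\XX p^2\pmod{p^4}$ from Theorem~\ref{thm:sun}(b) via the pairing $k\leftrightarrow p-k$ where the paper cites Sun's Remark~5.1 (a nice self-contained touch, though note that invoking Theorem~\ref{thm:sun}(b) for $k=4$ formally requires $p\ge 11$; for $p=7$ the needed fact $H(4;(p-1)/2)\equiv0\pmod p$ follows instead from $H(4;(p-1)/2)=8\bigl(H(4)+H(-4)\bigr)$ and \eqref{equ:Hdepth1}).
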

\begin{proof}  Write $U(-)=U(-;p-1)$ and $H(-)=H(-;p-1)$.
Taking $x=-1$ in \eqref{equ:taulastStep} we get
\begin{equation*}
U(-1)-H(1)\equiv  -pH(-2)+p^2H(1,-2)-p^3 H(1,1,-2)-2q_p \pmod{p^4}.
\end{equation*}
Thus the corollary follows from \eqref{equ:H(-2,1)}, Proposition~\ref{prop:allwt4},
the following congruences
\begin{alignat*}{2}
   H(1)\equiv&  2 p^2 \XX & \pmod{p^4} \quad \text{(by \cite[Remark~5.1]{Sun}),}\\
   H(2)\equiv& -4 p \XX   & \pmod{p^3}    \quad \text{(by \eqref{equ:SunThm5.1}),\ \phantom{(Remark5.1)}}\\
H(2,(p-1)/2)\equiv&  -14 p\XX & \pmod{p^3}\quad \text{(by \eqref{equ:SunThm5.2}),\ \phantom{(Remark5.1)}}
\end{alignat*}
and $H(-2)=\frac12H(2,(p-1)/2)-H(2).$
\end{proof}

\begin{prop} \label{prop:Hdepth2neg}
For all prime $p\ge 7$  write $H(-)=H(-;p-1)$, $h_{31}:= H(3,1;(p-1)/2)$, and set
$\XX$ as above. Then
\begin{align}\label{equ:H(-1,-2)}
H(-1,-2)  \equiv &\ \ \ \frac92 \XX
  -\frac56 p q_p B_{p-3} +\frac14 p  h_{31} &\pmod{p^2},\\
H(-2,-1) \equiv & -\frac92 \XX
    -\frac16p q_p B_{p-3}-\frac14 p h_{31} &\pmod{p^2}.  \label{equ:H(-2,-1)}
\end{align}
\end{prop}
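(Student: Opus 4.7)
The strategy is to produce two independent linear congruences for the pair $(H(-1,-2),H(-2,-1))$ modulo $p^2$ and solve them. The first comes from the stuffle identity $H(-1)H(-2)=H(-1,-2)+H(-2,-1)+H(3)$ combined with $H(-1)\equiv -2q_p\pmod p$ from \eqref{equ:H-1}, $H(-2)\equiv \tfrac{1}{2}pB_{p-3}\pmod{p^2}$ from Corollary~\ref{cor:Hdepth1}, and $H(3)\equiv 0\pmod{p^2}$ from \eqref{equ:SunThm5.1}, which yields
\[
H(-1,-2)+H(-2,-1)\equiv -p\,q_pB_{p-3}\pmod{p^2}.
\]
The refined reversal of Lemma~\ref{lem:reversalModpsq} gives exactly the same linear combination, since by \eqref{equ:l2wteven2} we have $H(-2,-2)\equiv 0\pmod p$ (because $B_{p-2}=0$) and by the stuffle $H(-1)H(-3)=H(-1,-3)+H(-3,-1)+H(4)$ together with Proposition~\ref{prop:wt4depth2} one finds $H(-3,-1)\equiv \tfrac{1}{2} q_pB_{p-3}\pmod p$. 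Consequently a genuinely new input is needed to separate $H(-1,-2)$ and $H(-2,-1)$.

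My plan for that additional input is to split the defining sum for one of the two, say
\[
H(-2,-1;p-1)=\sum_{1\le i<j\le p-1}\frac{(-1)^{i+j}}{i^2 j},
\]
according to whether each of $i,j$ lies in $[1,m]$ or $[m+1,p-1]$ with $m=(p-1)/2$, substituting $k\mapsto p-k$ on indices from the upper half and expanding $1/(p-k)^r\equiv (-1)^r k^{-r}(1+rp/k)\pmod{p^2}$. This rewrites $H(-2,-1;p-1)$ modulo $p^2$ as a combination of the half-range AMHS $H(-1,-2;m)$, $H(-2,-1;m)$, the product $H(-1;m)H(-2;m)$, and $p$ times explicit weight-four half-range sums. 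The depth-one half-range pieces are pinned down by Theorem~\ref{thm:sun}(b) and \eqref{equ:EulerBern} in terms of $q_p$ and $\XX$. Applying the split-at-$m$ technique one more time (and stuffle in the half range) reduces the depth-two half-range AMHS modulo $p$ to the known weight-three AMHS of Theorem~\ref{thm:depth2} and Proposition~\ref{prop:wt4depth2}; after cancellation the only truly new parameter that survives in the weight-four corrections is the positive half-range MHS $h_{31}=H(3,1;(p-1)/2)$. Combining this second equation with the first and solving the linear system delivers the claimed values.

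The main obstacle will be the verification, in the second step, that after the sum-splitting and binomial bookkeeping the only weight-four half-range contribution that does not collapse into $\XX$ or $q_pB_{p-3}$ is precisely $h_{31}$. This relies on carefully tracking parities, signs, and second-order binomial corrections, and on showing that every other weight-four half-range MHS arising in the reduction is either a linear combination of known full-range AMHS or can be rewritten, via stuffle, reversal, and Theorem~\ref{thm:sun}(b), in terms of $h_{31}$ itself.
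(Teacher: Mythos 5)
Your first relation is correct and coincides with the paper's: the stuffle $H(-1)H(-2)=H(-1,-2)+H(-2,-1)+H(3)$ gives $H(-1,-2)+H(-2,-1)\equiv -p\,q_pB_{p-3}\pmod{p^2}$, and you are also right that Lemma~\ref{lem:reversalModpsq} reproduces this same linear combination. The gap is in your proposed second equation. Splitting $H(-2,-1;p-1)$ at $m=(p-1)/2$ and reflecting $k\mapsto p-k$ preserves alternation (since $(-1)^{p-k}=-(-1)^k$), so what survives at order $p^0$ are the half-range \emph{alternating} sums $H(-2,-1;m)$ and $H(-1,-2;m)$, not merely $p$ times weight-four data. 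After the half-range stuffle absorbs the cross term, the identity reads schematically $H(-2,-1;p-1)\equiv 2H(-2,-1;m)+H(3;m)+p(\cdots)\pmod{p^2}$, which only trades the unknown $H(-2,-1;p-1)\bmod p^2$ for the unknown $H(-2,-1;m)\bmod p^2$: it is one equation in two unknowns and yields no second independent relation, and "applying the split one more time" has no meaning for a half-range sum. Moreover the order-$p$ corrections are weight-four \emph{alternating} half-range sums such as $H(-1,-3;m)$, $H(-2,-2;m)$ and $H(-2;m)^2$, which Theorem~\ref{thm:sun}(b) does not cover (it treats only positive-index half-range sums); quantities like $H(-2;(p-1)/2)\bmod p$ bring in the Euler number $E_{p-3}$, which does not occur in the target formula and would have to cancel in a way you do not control. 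Finally, nothing in your reduction ever produces the \emph{positive} half-range sum $h_{31}$, which must appear in the answer.

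The two missing ingredients are: (i) the parity (even/odd index) decomposition, which converts the alternating full-range sum $H(-1,-2;p-1)$ into a combination of positive half-range sums $H(1,2;(p-1)/2)$, $H(2,1;(p-1)/2)$, $h_{31}$ and the full-range $H(1,2;p-1)$ --- this is the step where $h_{31}$ enters --- and (ii) an external mod-$p^2$ input of weight three, namely $H(1,2;p-1)\equiv -6\XX\pmod{p^2}$ from \cite{Tau2}. The paper does use your midpoint-splitting device, but applies it to the positive sum $H(1,2;p-1)$: writing it as the sum of the part with both indices at most $(p-1)/2$, the reflected part, and the cross part, and inserting the known value $-6\XX$, evaluates $H(1,2;(p-1)/2)-H(2,1;(p-1)/2)\bmod p^2$ in terms of $\XX$, $q_pB_{p-3}$ and $h_{31}$; substituting this into the parity decomposition gives \eqref{equ:H(-1,-2)}, and your first relation then yields \eqref{equ:H(-2,-1)}. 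Without (i) and (ii), the system you set up cannot separate $H(-1,-2)$ from $H(-2,-1)$ modulo $p^2$.
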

\begin{proof}
By expanding $H(1,2;p-1)$ we get
\begin{alignat}{2}
& H(1,2;(p-1)/2)-H(2,1;(p-1)/2) & \ \\
\equiv&
H(1,2;p-1)-H(1;(p-1)/2) \Big( H(2;(p-1)/2)+2pH(3;(p-1)/2) \Big) \\
& \hskip 1cm
+pH(2,2;(p-1)/2)+2ph_{31} &\pmod{p^2}\,\\
\equiv& -6\XX - \frac{10}{3} p q_p  B_{p-3}
+2ph_{31}  &\pmod{p^2}
\end{alignat}
since $2H(2,2;(p-1)/2)=H(2;(p-1)/2)^2-H(4;(p-1)/2)\equiv 0\pmod{p}$
and
\begin{equation}\label{equ:H(1,2)}
H(1,2)\equiv -H(2,1)\equiv -6\XX \pmod{p^2}
\end{equation}
by \cite[Theorem 2.3]{Tau2}. Hence
\begin{alignat*}{2}
H(-1,-2)\equiv & \frac14\Big(H(1,2;(p-1)/2)-H(2,1;(p-1)/2)-p h_{31}\Big)-H(1,2) &\pmod{p^2}\, \\
 \equiv & \frac92 \XX
  -\frac56 p q_p B_{p-3} +\frac14 p h_{31} &\pmod{p^2}
\end{alignat*}
which is \eqref{equ:H(-1,-2)}. Finally, \eqref{equ:H(-2,-1)} follows from
\begin{equation*}
 H(-1,-2)+H(-2,-1)=H(-1)H(-2)-H(3) \equiv -p q_p B_{p-3} \pmod{p^2}.
\end{equation*}
This completes the proof of the proposition.
\end{proof}

Finally, we consider the depth three cases.
\begin{prop} \label{prop:Hdepth3}
Let $A$ and $B$ be defined as in \S\ref{sec:wt4}. For all prime $p\ge 7$ set
$\XX$ as above, write $H(-)=H(-;p-1)$
and $h_{31}:= H(3,1;(p-1)/2)$ as above. Then we have
\begin{alignat}{2}
H(-1,-1,-1)\equiv&  -\frac43 q_p^3+\XX+
p\Big(2q_p^4+\frac23 q_p B_{p-3}\Big) &\pmod{p^2}, \label{equ:H(-1,-1,-1)}\\
H(-1,1,-1)\equiv& \frac p2 \Big(q_p B_{p-3}+B - A\Big)  &\pmod{p^2}, \label{equ:H(-1,1,-1)} \\
H(1,-1,-1)\equiv& -q_p^3+\frac{21}4 \XX+ p
\Big(\frac32 q_p^4+\frac38 q_p B_{p-3}
+\frac A4-\frac B4+\frac18 h_{31}\Big)  &\pmod{p^2}, \label{equ:H(1,-1,-1)}\\
H(-1,-1,1)\equiv&  q_p^3-\frac{21}4 \XX+
p \Big(-\frac32 q_p^4+\frac18 q_p B_{p-3}
+\frac A4   -\frac B4-\frac18 h_{31}\Big) &\pmod{p^2}, \label{equ:H(-1,-1,1)}\\
H(1,-1,1)\equiv&-2 H(1,1,-1)+3\XX+ p\Big(\frac76 q_p B_{p-3}+A-B\Big), &\pmod{p^2},  \label{equ:H(1,-1,1)}\\
H(-1,1,1)\equiv& H(1,1,-1)-p \Big(\frac12 q_p B_{p-3}+A-B \Big)&\pmod{p^2}. \label{equ:H(-1,1,1)}
\end{alignat}
\end{prop}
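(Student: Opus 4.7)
The strategy is to isolate the homogeneous case $H(-1,-1,-1)$ via a Newton-type identity, isolate $H(-1,1,-1)$ via a depth-three mod-$p^2$ reversal, then use stuffle relations to reduce every remaining sum to quantities already computed to the needed precision. For $H(-1,-1,-1)$, interpreting the AMHS as the elementary symmetric polynomial $e_3(x_1,\ldots,x_{p-1})$ with $x_j = (-1)^j/j$ and applying the Newton identity $6e_3 = p_1^3 - 3p_1 p_2 + 2p_3$ gives
\begin{equation*}
6H(-1,-1,-1) = H(-1)^3 - 3H(-1)H(2) + 2H(-3).
\end{equation*}
Substituting $H(-1)$ mod $p^3$ from \eqref{equ:H-1}, $H(2) \equiv -4p\XX$ mod $p^3$ from Theorem~\ref{thm:sun}(a), and $H(-3) \equiv 3\XX$ mod $p^2$ from \eqref{equ:H(-3)}, and using $\XX \equiv -B_{p-3}/6 \pmod{p}$ to convert the $pq_p\XX$ cross term into $-\tfrac{1}{6}pq_pB_{p-3}$, yields \eqref{equ:H(-1,-1,-1)}.

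For $H(-1,1,-1)$ I would first establish a depth-three analogue of Lemma~\ref{lem:reversalModpsq}: substituting $k_i \mapsto p-k_{4-i}$ in the defining sum and expanding $(p-k)^{-|s|} \equiv (-1)^{|s|}k^{-|s|}(1+p|s|/k) \pmod{p^2}$ gives, for every $(a,b,c)\in(\ZZ^*)^3$,
\begin{equation*}
H(a,b,c) \equiv (-1)^{|a|+|b|+|c|}\sign(abc)\Big(H(c,b,a) + p|a|H(c,b,\sign(a){+}a) + p|b|H(c,\sign(b){+}b,a) + p|c|H(\sign(c){+}c,b,a)\Big) \pmod{p^2}.
\end{equation*}
Applied to $(a,b,c)=(-1,1,-1)$ the leading term is $-H(-1,1,-1)$, so $2H(-1,1,-1) \equiv -p\bigl(H(-1,1,-2) + H(-1,2,-1) + H(-2,1,-1)\bigr) \pmod{p^2}$, and the three weight-four sums on the right are evaluated mod $p$ by Proposition~\ref{prop:allwt4}; their combination simplifies (via $H_{\bar 2 1 1} = (A-B)/2$) to $-(B-A) - q_pB_{p-3}$, producing \eqref{equ:H(-1,1,-1)}.

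The remaining four sums follow from stuffle identities. The stuffle of $H(-1)$ with $H(1,-1)$ gives $H(-1)H(1,-1) = H(-1,1,-1) + 2H(1,-1,-1) + H(1,2) + H(-2,-1)$, and solving for $H(1,-1,-1)$ using Propositions~\ref{prop:Hdepth2}, \ref{prop:Hdepth2neg} and the just-computed $H(-1,1,-1)$ gives \eqref{equ:H(1,-1,-1)}. Then \eqref{equ:H(-1,-1,1)} follows from the stuffle of $H(1)$ with $H(-1,-1)$, which since $H(1)\equiv 0 \pmod{p^2}$ collapses to $H(1,-1,-1) + H(-1,1,-1) + H(-1,-1,1) \equiv -\bigl(H(-1,-2)+H(-2,-1)\bigr) \equiv p q_p B_{p-3} \pmod{p^2}$, the last step using $H(-1,-2)+H(-2,-1) = H(-1)H(-2)-H(3) \equiv -pq_pB_{p-3}$. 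Likewise $H(1)H(1,-1) \equiv 0 \pmod{p^2}$ expresses $H(1,-1,1)$ in terms of $H(1,1,-1)$, $H(1,-2)$, $H(2,-1)$, yielding \eqref{equ:H(1,-1,1)}; and $H(1)H(-1,1) \equiv 0 \pmod{p^2}$ combined with \eqref{equ:H(1,-1,1)} yields \eqref{equ:H(-1,1,1)}. The main technical obstacle is the bookkeeping of $A$, $B$, $\XX$, $h_{31}$, $J$ and the powers $q_p^3, q_p^4, q_pB_{p-3}$ modulo $p^2$ when several stuffle identities are combined; no conceptual ingredient beyond the depth-three reversal is required.
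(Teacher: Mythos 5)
Your proposal is correct and follows essentially the same route as the paper: the mod-$p^2$ depth-three reversal (the analogue of Lemma~\ref{lem:reversalModpsq}) pins down $H(-1,1,-1)$ via Proposition~\ref{prop:allwt4}, and stuffle relations combined with Propositions~\ref{prop:Hdepth2} and \ref{prop:Hdepth2neg} handle the rest. The only deviations are cosmetic: your Newton identity $6e_3=p_1^3-3p_1p_2+2p_3$ for $H(-1,-1,-1)$ is exactly what the paper's stuffle $3H(-1,-1,-1)=H(-1)H(-1,-1)-H(-1,2)-H(2,-1)$ unfolds to after substituting $H(-1,-1)=(H(-1)^2-H(2))/2$ and $H(-1,2)+H(2,-1)=H(-1)H(2)-H(-3)$, and your use of $H(1)H(-1,-1)$ in place of the paper's $H(-1)H(-1,1)$ for $H(-1,-1,1)$ is an equivalent bookkeeping choice.
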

\begin{proof}
First, \eqref{equ:H(-1,-1,-1)}
follows from the stuffle relation
$$3H(-1,-1,-1)=H(-1)H(-1,-1)-H(-1,2)-H(2,-1).$$
By reversal relation (similar to the proof of Lemma~\ref{lem:reversalModpsq})
we can show easily that
\begin{equation*}
 H(-1,1,-1)\equiv -H(-1,1,-1)-p\Big(H(-2,1,-1)+H(-1,2,-1)+H(-1,1,-2) \Big) \pmod{p^2}.
\end{equation*}
Notice that $H(-1,1,-2)\equiv H(-2,1,-1)\pmod{p}$. Hence
\eqref{equ:H(-1,1,-1)} follows from Proposition~\ref{prop:allwt4}.
It then implies \eqref{equ:H(1,-1,-1)} and \eqref{equ:H(-1,-1,1)}
by the two stuffle relations:
\begin{align*}
 2H(1,-1,-1)=&H(-1)H(1,-1)-H(-1,1,-1)-H(-2,-1)-H(1,2),\\
 2H(-1,-1,1)=&H(-1)H(-1,1)-H(-1,1,-1)-H(-1,-2)-H(2,1),
\end{align*}
Proposition \ref{prop:Hdepth2}, Proposition \ref{prop:Hdepth2neg}
and \eqref{equ:H(1,2)}. Similarly,
the last two congruences follow immediately from the stuffle relations:
\begin{align*}
 H(1,-1,1)=&H(1)H(1,-1)-2H(1,1,-1)-H(2,-1)-H(1,-2),\\
 2H(-1,1,1)=&H(1)H(-1,1)-H(1,-1,1)-H(-2,1)-H(-1,2),
\end{align*}
and Proposition \ref{prop:Hdepth2}. This completes the proof of the proposition.
\end{proof}
\begin{rem}
Currently, we are not able to express $H(1,1,-1)\pmod{p^2}$
explicitly. In fact, by \eqref{equ:tauS(-1,1,1)} it is not hard to find that
$$H(1,1,-1)\equiv
U(-3)-p\Big(U(-4)+\frac{7}{12}q_pB_{p-3}+A-B\Big)\pmod{p^2},$$
hence it is equivalent to determining $U(-3)\pmod{p^2}$.
\end{rem}

In conclusion, we remark that to study weight $w$ AMHS modulo $p^2$ we
need information of weight $w+1$ AMHS modulo $p$, and if we change modulus
to $p^3$ then we would need to know AMHS of weight $w+2$ modulo $p$.
Hence, it is possible that results in \cite{Agoh} might provide some
help in determining the congruence of weight three AMHS modulo prime
squares.

Email Address: tauraso@axp.mat.uniroma2.it; zhaoj@eckerd.edu

\end{document}